\theoremstyle{plain}
\newtheorem{thm}{Theorem}[section]
\newtheorem{lem}[thm]{Lemma}
\theoremstyle{definition}
\theoremstyle{remark}
\newtheorem{remark}{Remark}
\newcommand{\xbold}{\mathbf x} 
\renewcommand{\u}{u} 
\newcommand{\utilde}{\widetilde \u} 
\newcommand{\card}{\text{card}} 
\newcommand{\taun}{\mathcal T _n} 
\newcommand{\tautilden}{\widetilde {\mathcal T} _n} 
\newcommand{\vn}{v_{n}} 
\newcommand{\wn}{w_{n}} 
\newcommand{\un}{u_{n}} 
\newcommand{\Vn}{V_n} 
\newcommand{\Nu}{ \mathcal V }
\newcommand{\E}{K} 
\newcommand{\an}{ a _{n} } 
\newcommand{\anE}{ a _{n}^{\E} } 
\newcommand{\aE}{ a ^{\E} } 
\newcommand{\fn}{ f _{n} } 
\newcommand{\Pinabla}{\Pi ^{\nabla}} 
\newcommand{\bigfnE}{ \mathcal{F} _n^\E} 
\newcommand{\upi}{\u_{\pi}} 
\newcommand{\uI}{\u_{I}} 
\newcommand{\uE}{\pi \u} 
\newcommand{\uhatE}{\widehat {\pi \u}} 
\newcommand{\dof}{\text{dof}} 
\newcommand{\boldalpha}{\boldsymbol \alpha} 
\newcommand{\boldbeta}{\boldsymbol{\beta}} 
\newcommand{\el}{\ell} 
\newcommand{\diam}{\text{diam}} 
\newcommand{\Omegaext}{\Omega^{\text{ext}}} 
\newcommand{\uhat}{\widehat u} 
\newcommand{\Qhat}{\widehat Q} 
\newcommand{\Babuska}{Babu\v{s}ka } 
\newcommand{\dist}{\text{dist}} 
\newcommand{\Cn}{C_n} 
\newcommand{\Q}{Q} 
\renewcommand{\c}{c} 
\newcommand{\pE}{\p_{\E}}
\newcommand{\sE}{s_{\E}} 
\newcommand{\e}{E} 
\newcommand{\pe}{p_{\e}} 
\newcommand{\q}{q} 
\newcommand{\cinv}{c_{\text{inv}}} 
\newcommand{\p}{p} 
\newcommand{\dx}{dx} 
\newcommand{\dy}{dy} 
\newcommand{\Jalphan}{J_n^{\alpha}} 
\newcommand{\rhoalpha}{\rho_{\alpha}} 
\newcommand{\T}{T} 
\newcommand{\bT}{b_{\T}} 
\newcommand{\bThat}{b_{\That}} 
\newcommand{\That}{\widehat \T} 
\newcommand{\bE}{b_{\E}} 
\newcommand{\Ihat}{\widehat I} 
\newcommand{\SE}{S^\E} 
\newcommand{\Pizpmd}{\Pi^0_{\pE-2}} 
\newcommand{\Pinablap}{\Pi^{\nabla}_{\pE}} 
\newcommand{\VnE}{V(\E)} 
\newcommand{\ultraspherical}{shifted ultraspherical } 
\author{
\normalsize{
L. Beir\~ao da Veiga
\footnote{Dip. di Matematica e Applicazioni,  Universit\`a degli Studi di Milano-Bicocca, E-mail: {\tt lourenco.beirao@unimib.it}}
\footnote{IMATI-CNR, Pavia}$\,\,\,$,
A. Chernov
\footnote{Inst. f\"ur Mathematik, C. von Ossietzky Universit\"at Oldenburg, E-mail: {\tt alexey.chernov@uni-oldenburg.de}}$\,\,$,
L. Mascotto
\footnote{Dip. di Matematica,  Universit\`a degli Studi di Milano, E-mail: {\tt lorenzo.mascotto@unimi.it}
and Inst. f\"ur Mathematik, C. von Ossietzky Universit\"at Oldenburg, E-mail: {\tt mascotto.lorenzo@uni-oldenburg.de}}$\,\,$,
A. Russo
\footnote{Dip. di Matematica e Applicazioni,  Universit\`a degli Studi di Milano-Bicocca, E-mail: {\tt alessandro.russo@unimib.it}}
\footnote{IMATI-CNR, Pavia}
}}
\date{}
\title{\textbf{\normalsize{Exponential convergence of the $hp$ Virtual Element Method with corner singularities}}}
\begin{document}
\maketitle

\begin{abstract}
In the present work, we analyze the $hp$ version of Virtual Element methods for the 2D Poisson problem.
We prove exponential convergence of the energy error employing sequences of polygonal meshes geometrically refined, thus extending the classical choices for the decomposition in the $hp$ Finite Element framework to very general decomposition of the domain.
A new stabilization for the discrete bilinear form with explicit bounds in $h$ and $\p$ is introduced.
Numerical experiments validate the theoretical results.
We also exhibit a numerical comparison between $hp$ Virtual Elements and $hp$ Finite Elements.
\end{abstract}

\section {Introduction} \label{section introduction}
The Virtual Element Method (VEM) is a very recent generalization of the Finite Element Method (FEM), see \cite{VEMvolley}.
VEM utilizes polygonal/polyhedral meshes \emph{in lieu of} the classical triangular/tetrahedral and quadrilateral/hexaedral meshes.
This automatically includes nonconvex elements, hanging nodes (enabling natural handling of interface problems with nonmatching grids), easy construction of adaptive meshes and efficient approximations of geometric data features.

Among the properties of VEM, in addition to the employment of polytopal meshes, we recall
the possibility of handling approximation spaces of arbitrary $\mathcal C^k$ global regularity \cite{Brezzi-Marini:2012,BeiraoManzini_VEMarbitraryregularity}
and approximation spaces that satisfy exactly the divergence-free constraint \cite{streamvirtualelementformulationstokesproblempolygonalmeshes, BLV_StokesVEMdivergencefree}.

The main idea of VEM consists in enriching the classical polynomial space with other functions, whose explicit knowledge is not needed for the construction of the method (this explains the name \emph{virtual}).

We point out that the literature concerning methods based on polytopal meshes is not restricted to the Virtual Element Method. A (very short and incomplete) list of other polytopal-based methods follows:
Hybrid High Order Methods \cite{dipietroErn_hho}, Mimetic Finite Difference \cite{BLM_MFD,BLS_MFD},
Hybrid Discontinuous Galerkin Methods \cite{cockburn_HDG}, Polygonal Finite Element Method \cite{SukumarTabarraeipolygonalintroduction, talischi2010polygonal, GilletteRandBajaj_generalizedbarycentric},
Polygonal Discontinuous Galerkin Methods \cite{cangianigeorgoulishouston_hpDGFEM_polygon}, BEM-based Finite Element Methods \cite{Weisser_basic}.

Although VEM is a very recent technology, the associated bibliography is widespread. Among the treated topic, we recall the following works concerning implementation issues \cite{hitchhikersguideVEM}, general linear second-order elliptic problems \cite{equivalentprojectorsforVEM, bbmr_VEM_generalsecondorderelliptic, cangianimanzinisutton_VEMconformingandnonconforming, BBMR_generalsecondorder},
Stokes problem \cite{streamvirtualelementformulationstokesproblempolygonalmeshes, BLV_StokesVEMdivergencefree,Gatica-1, CGM_nonconformingStokes},
Cahn-Hillard equation \cite{absv_VEM_cahnhilliard}, locking-free linear elasticity problem \cite{VEMelasticity,Paulino-VEM}, small deformation problems in structural mechanics \cite{BLM_VEMsmalldeformation},
plate bending problem \cite{Brezzi-Marini:2012}, Steklov eigenvalue problem \cite{VEMchileans}, residual based a-posteriori error estimation \cite{ManziniBeirao_VEMresidualaposteriori, cangianigeorgulispryersutton_VEMaposteriori, MoraRiveraRodriguez_aposSteklov},
serendipity Virtual Elements \cite{serendipityVEM}, application to discrete fracture network simulations \cite{Berrone-VEM, Benedetto-VEM-2, Benedetto-VEM-3},
contact problem \cite{Wriggers-contact}, comparison with the Smoothed Finite Element Method \cite{Bordas-VEM}, topology optimization \cite{Topology-VEM}, geomechanics problem \cite{Andersen-geo},
Helmoltz problem \cite{Helmholtz-VEM}.


In all these works, the convergence of VEM approximations has been achieved by increasing the number of mesh elements while keeping the degree of local approximation fixed. In other words and according to the existing terminology, these methods utilize the \emph{$h$-version of VEM}.
An alternative avenue to construct convergent approximations is the \emph{$\p$-version of VEM} which is based on increasing the degree of local approximations while keeping the underlying mesh fixed. A combination of both methodologies is termed the \emph{$hp$-version of VEM}.

The recent work \cite{hpVEMbasic} provides a mathematical ground for the $\p$-version of VEM for the two-dimensional Poisson problem. In particular, it includes the \emph{a priori} convergence theory for the $\p$- and $hp$-version of VEM on quasiuniform polygonal meshes and for uniform distributions of local degrees of approximation. An exponential convergence has been established for analytic solutions and convergence at algebraic rates for solutions having finite Sobolev regularity.

The objectives of the present paper are the following.
First, we generalize the results in \cite{hpVEMbasic} and in particular the definition of $H^1$ conforming Virtual Element to the case with varying local degree of accuracy from element to element.
Such construction fits very naturally in the framework of VEM without additional complications.
Furthermore, we extend the $hp$-VEM approach to nonquasiuniform approximations and prove their exponential convergence for nonsmooth solutions having typical corner singularities, see \cite{babuvska1988regularity, babuvska1989regularity}.

Similarly to the $hp$-version of FEM (see \cite{SchwabpandhpFEM} and the references therein) the approximation is based on geometrically refined meshes with appropriate (linearly varying) local degree of accuracy.
In order to derive the proofs, we introduce a new stabilization for the method, which turns out to be more suitable for the $\p$ and $hp$ version of VEM;
in particular, explicit bounds of the new stabilizing term with respect to the $H^1$ seminorm in terms of the local degree of accuracy are shown.
This proof requires a particular inverse estimate on polynomials, presented in the first appendix.
To the best of the authors knowledge, such an inverse estimate has been never published before.

As a byproduct of this work, new tools for the approximation by means of functions in the Virtual Element Space are presented; such tools permits to avoid additional assumptions on the polygonal decomposition of the computational domain.

The structure of the paper is the following.
After presenting the model problem and the Virtual Element Method in Section \ref{section model problem and functional spaces} and \ref{section VEM with non uniform degree of accuracy} respectively,
we deal with explicit bounds in terms of the degree of accuracy of a new stabilization term in Section \ref{section stability}.
In Section \ref{section exponential convergence for corner singularity on geometric meshes} we show the approximation results and the main theorem of the paper, namely the exponential convergence of the energy error in terms of the dimension of the virtual space,
while in Section \ref{section numerical results} we validate the theoretical results with numerical tests, including a set of experiments on the comparison between $hp$ FEM and $hp$ VEM.
Finally, in the two Appendices \ref{section appendix A} and \ref{section appendix B}, we discuss two particular polynomial inverse estimates needed for the stability bounds of Section \ref{section stability}.

\section {Model problem} \label{section model problem and functional spaces}
In this section, we discuss the functional space setting and the model problem under consideration.

Firstly, we introduce the functional spaces that will be used throughout the paper.
Let $\el \in \mathbb N$ and let $D \subseteq \mathbb R^2$ be a given domain whose closure contains the origin, i.e. $\mathbf 0 \in D$.
We denote with $L^\el(D)$ the Lebesgue space of $\el$-summable functions and we denote with $H^\el(\omega)$ the Sobolev space of order $\el$ on the domain $D$, respectively;
let $\| \cdot \|_{\el,D}$ and $| \cdot |_{\el,D}$ be the Sobolev norm and seminorm, see \cite{adamsfournier}.
Let $H^1_0(D)=\{u\in H^1(D) : \u|_{\partial D}=0\}$.

Let now $\beta \in (0,1)$, $\Phi _{\beta} (\mathbf x)=|\mathbf x|^{\beta}$, where $|\cdot|$ represents the Euclidean norm in $\mathbb R ^2$.
Given $u: \omega \rightarrow \mathbb R$, $m,\,l \in \mathbb N$, $m\ge \el$, we set
\begin{equation}	\label{weighted seminorm}
|u|^2_{H^{m,\el}_{\beta}(D)} : = \sum_{k=\el}^{m} \| |D^{k} u| \Phi _{\beta+k-\el} \|^2_{0,D},
\end{equation}
where:
\[
\vert D^k \u \vert^2 = \sum_{\boldalpha \in \mathbb N^2,\, \vert \boldalpha \vert = n} \vert D^{\boldalpha} \u \vert^2.
\]
We define the weighted Sobolev spaces
\begin{equation} \label{weighted Sobolev space}
H^{m,\el}_{\beta}(D) := \left \{ u \in L^2(D)  \text{ }\bigg | \text{ } \|u\|_{H^{m,\el}_{\beta}(D)} < \infty   \right\},
\end{equation}
where the corresponding weighted Sobolev norm reads
\begin{equation} \label{weighted Sobolev norm}
\|u\|^2_{H^{m,\el}_{\beta}(D)} : =
\begin{cases}
\| u  \|^2_{\el-1,D} + |u|^2_{H^{m,\el}_{\beta}(D)} 		& \text{if }\el\ge 1,\\
\sum_{k=0}^{m} \| |D^{k} u| \Phi _{\beta+k} \|^2_{0,D}		& \text{if }\el= 0.\\
\end{cases}
\end{equation}
Having this, we recall the countably normed spaces (also known as \Babuska spaces), see \cite{SchwabpandhpFEM} and the references therein:
\begin{equation} \label{countably normed space}
\mathcal B _{\beta}^\el (D):= \left \{  u \mid u\in H^{m,\el}_{\beta}(D),\, \forall m\ge \el\;\; \text{and} \;\; \Vert \vert D^k u\vert \Phi_{\beta + k - \el} \Vert_{0,D}\le c\cdot d ^{k-\el} \cdot (k-\el)!,\, \forall \,k=\el, \el+1,\dots           \right \},
\end{equation}
where $c\ge 0$ and $d\ge 1$ are two constants depending on $u$ and $D$.
We point out that space \eqref{countably normed space} is not empty since it contains functions of the form $u=|\mathbf x|^{\alpha}$, for some $\alpha>0$.

Definition \eqref{countably normed space} can be generalized to the case of functions with ``multiple'' singularities at the vertices of a polygonal domain
i.e. adding in definition \eqref{weighted seminorm} weights of the form $|\mathbf x -\mathbf x_{0}|$, $\mathbf{x_0}$ being a vertex of the polygon different from $\mathbf 0$; see \cite{SchwabpandhpFEM}.
In particular, defining $N_V(D)$ and $\{\mathbf {A}_i(D)\}_{i=1}^{N_V(D)}$ the number and set of vertices of $D$ respectively, we will denote the general space with $H^{m,\boldsymbol \el}_{\boldbeta}(D)$, where
$\boldbeta=(\beta_1,\dots,\beta_{N_V(D)})$  and $\boldsymbol \el =(\el_1(\beta_1),\dots, \el_{N_V(\omega)}(\beta_{N_V(D)}))$ are the vectors associated with the singularities at the vertices of $\omega$.
The associated weight function reads:
\[
\Phi_{\boldbeta} (\mathbf x) = \Pi_{i=1}^{N_V(D)} r_i(\mathbf x)^{\beta_i},\qquad r_i(\mathbf x)=\min (1,\vert \mathbf x -\mathbf {A}_i(D) \vert).
\]

Secondly, we introduce the model problem.
Let $\Omega$ be a open simply connected polygonal domain. Let $f\in L^2(\Omega)$ be given. We consider the two dimensional Poisson problem:
\begin{equation} \label{continuous strong problem}
-\Delta u = f \text{ in } \Omega,\quad u=0 \text{ on } \partial \Omega
\end{equation}
and its weak formulation:
\begin{equation} \label{continuous weak problem}
\text{find } u\in V:=H^1_0(\Omega)\qquad \text{ such that } \qquad a(u,v)=(f,v)_{0,\Omega},\quad \forall v \in V,
\end{equation}
where $(\cdot, \cdot)_{0,\Omega}$ is the $L^2$ scalar product on $\Omega$ and $a(\cdot, \cdot) = (\nabla \cdot, \nabla \cdot)_{0,\Omega}$.
The Lax-Milgram lemma guarantees the existence of a unique weak solution $\u \in V$.

We recall a regularity result for the solution of problem \eqref{continuous weak problem}.
Let $N_V$ and $\{\mathbf {A}_i\}_{i=1}^{N_V}$ be the number and the set of vertices of $\Omega$ respectively;
let $\omega_i$ be the (internal) angle associated with vertex $\mathbf {A}_i$, $i=1,\dots, N_V$.
To each $\mathbf {A}_i$, we associate the set of the so-called \emph{singular exponents} for Poisson problem with Dirichlet condition (see also \cite[formula (4.2.2)]{SchwabpandhpFEM}):
\begin{equation} \label{singular exponents for Dirichlet conditions}
\alpha_{i}= \frac{\pi}{\omega_i},\quad \forall i=1,\dots, N_V.
\end{equation}
Then, the following holds:
\begin{thm} \label{theorem Babuska regularity}
Following the notation of problem \eqref{continuous weak problem}, let $f\in H^{0,0}_{\boldbeta}(\Omega)$, $\beta \in [0,1)$;
assume that the singular exponents $\alpha_{i}$ defined in \eqref{singular exponents for Dirichlet conditions}
satisfy:
\[
1-\alpha_{i} <\beta_i<1,\quad \text{if } \alpha_{i} <1,\quad \forall i=1,\dots, N_V.
\]
Then the solution of \eqref{continuous weak problem}belongs to $H^{2,2}_{\boldbeta}(\Omega)$ and the a propri estimate:
\[
\Vert u \Vert _{H^{2,2}_{\boldbeta}(\Omega)} \le c \Vert f \Vert _{H^{0,0}_{\boldbeta}(\Omega)},
\]
holds. Moreover, if $f \in \mathcal B^0_{\boldbeta}(\Omega)$, then $\u \in \mathcal B^2_{\boldbeta} (\Omega)$.
\end{thm}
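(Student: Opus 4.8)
The plan is to treat this as a classical corner--regularity statement in the spirit of Babu\v{s}ka and Guo \cite{babuvska1988regularity, babuvska1989regularity} (see also \cite{SchwabpandhpFEM}), and to reduce the global weighted estimate to a local analysis at each vertex. First I would fix a smooth partition of unity $\{\chi_i\}$ subordinate to a finite cover of $\overline{\Omega}$, arranged so that exactly one patch surrounds each vertex $\mathbf A_i$ while every remaining patch meets $\partial\Omega$ only along straight arcs away from the corners. On the latter patches $\chi_i u$ solves a Poisson problem on a smooth bounded domain (or a half--disc) with homogeneous Dirichlet data and $L^2$ right--hand side, so classical $H^2$--regularity gives $\|\chi_i u\|_{2}\le c(\|f\|_{0}+\|u\|_{1})$; since $\Phi_{\boldbeta}$ is bounded from above and below by positive constants on these patches, the corresponding contribution to $\|u\|_{H^{2,2}_{\boldbeta}(\Omega)}$ is controlled. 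To close the low--order terms I would write $|u|_{1,\Omega}^{2}=(f,u)_{0,\Omega}=(f\Phi_{\boldbeta},u\Phi_{\boldbeta}^{-1})_{0,\Omega}\le\|f\|_{H^{0,0}_{\boldbeta}(\Omega)}\,\|u\Phi_{\boldbeta}^{-1}\|_{0,\Omega}$ and invoke the weighted Hardy inequality $\|v\Phi_{\boldbeta}^{-1}\|_{0,\Omega}\le c\,|v|_{1,\Omega}$ for $v\in H^1_0(\Omega)$, which holds precisely because $\beta_i<1$; this yields $|u|_{1,\Omega}\le c\|f\|_{H^{0,0}_{\boldbeta}(\Omega)}$. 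Everything then reduces to the model situation of a single Dirichlet corner.

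Near one vertex, translated to the origin, $u$ solves $-\Delta u=f$ on a sector $S_\omega$ of opening $\omega=\omega_i$ with $u=0$ on the two radial edges. I would pass to polar coordinates and apply the Mellin transform in $t=-\log|\mathbf x|$, which turns the equation into a one--parameter family of two--point boundary value problems on the angular interval $(0,\omega)$; the relevant operator pencil is the Dirichlet Laplacian on the cross--section, whose spectrum consists of the real points $\pm k\alpha_i$, $k\in\mathbb N$, with $\alpha_i=\pi/\omega_i$. The hypotheses $1-\alpha_i<\beta_i<1$ are exactly the statement that the vertical line $\{\mathrm{Re}\,\lambda=1-\beta_i\}$ lies in the spectral gap $(0,\alpha_i)$ when $\alpha_i<1$ (the condition being vacuous when $\alpha_i\ge1$, since then $1-\beta_i<1\le\alpha_i$); consequently the resolvent of the pencil is uniformly bounded along that line, and Parseval's identity for the Mellin transform converts this into the weighted a priori estimate
\[
\big\|\,|D^2u|\,\Phi_{\boldbeta}\,\big\|_{0,\,S_\omega\cap\{|\mathbf x|<1\}}\;\le\;c\Big(\big\|\,|f|\,\Phi_{\boldbeta}\,\big\|_{0,S_\omega}+\|u\|_{1,S_\omega}\Big).
\]
Together with the first paragraph this gives $u\in H^{2,2}_{\boldbeta}(\Omega)$ with the stated bound. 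An equivalent, more hands--on route is to split $S_\omega\cap\{|\mathbf x|<1\}$ into dyadic annular sectors, rescale each onto a fixed reference domain, use standard $H^2$--regularity there, and sum over the annuli; the inequality $1-\alpha_i<\beta_i$ is then precisely what forces the geometric series to converge, because the leading corner behaviour $u\sim r^{\alpha_i}$ makes $\|\nabla u\|_{0}$ on the $j$--th annulus decay like $2^{-j\alpha_i}$, which beats the scaling growth factor $2^{j(1-\beta_i)}$.

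For the final assertion I would assume in addition $f\in\mathcal B^0_{\boldbeta}(\Omega)$, i.e.\ $\big\|\,|D^q f|\,\Phi_{\boldbeta+q}\,\big\|_{0,\Omega}\le c\,d^{q}\,q!$ for all $q$, and upgrade the weighted corner estimate to arbitrary derivative order --- either by a higher--order Mellin estimate or by induction on the order, using the analytic interior (and up--to--the--straight--boundary) elliptic estimates on the fixed reference domain. This produces, near each vertex, an inequality of the schematic form
\[
\big\|\,|D^{N}u|\,\Phi_{\boldbeta+N-2}\,\big\|_{0}\;\le\;c\sum_{q=0}^{N-2}\widetilde c^{\,N-q}\,(N-2-q)!\;\big\|\,|D^{q}f|\,\Phi_{\boldbeta+q}\,\big\|_{0}\;+\;c\,\widetilde c^{\,N}(N-2)!\;\|u\|_{1,\Omega},\qquad N\ge2 .
\]
Inserting the analytic bounds on $f$, taking $\widetilde c\ge d$, and using the elementary fact $\sum_{q=0}^{N-2}(N-2-q)!\,q!=(N-2)!\sum_{q=0}^{N-2}\binom{N-2}{q}^{-1}\le C\,(N-2)!$, I would conclude $\big\|\,|D^{N}u|\,\Phi_{\boldbeta+N-2}\,\big\|_{0,\Omega}\le c'\,\widetilde d^{\,N-2}(N-2)!$ for all $N\ge2$, which, together with $u\in H^1_0(\Omega)$, is exactly membership in $\mathcal B^2_{\boldbeta}(\Omega)$.

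The hard part is the single estimate on which everything else rests: the uniform boundedness of the operator--pencil resolvent along the line $\mathrm{Re}\,\lambda=1-\beta_i$ (equivalently, the convergence of the dyadic sums). This is the only place where the precise range $1-\alpha_i<\beta_i<1$ is consumed, and checking it amounts to locating the spectrum $\{\pm k\alpha_i\}$ of the Dirichlet Laplacian on the corner cross--section relative to that line; everything else is elliptic regularity on fixed domains plus scaling bookkeeping. For the complete proof I would refer to \cite{babuvska1988regularity, babuvska1989regularity}.
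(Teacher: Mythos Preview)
The paper does not actually prove this theorem: its entire proof is the one-line reference ``See \cite{babuvska1988regularity, babuvska1989regularity}.'' Your proposal is a correct and reasonably detailed sketch of the classical Babu\v{s}ka--Guo argument from precisely those references (localization by a partition of unity, Mellin/operator--pencil analysis at each corner with the spectral gap condition $1-\alpha_i<\beta_i<1$, and inductive/shift estimates for the $\mathcal B$--space conclusion), so you are in full agreement with the paper's approach and in fact supply considerably more than it does.
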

\begin{proof}
See \cite{babuvska1988regularity, babuvska1989regularity}.
\end{proof}
For the sake of simplicity, we will assume in the rest of the paper that 
\begin{equation} \label{highlight one singularity}
\text{$\mathbf 0 \in \partial \Omega$ is the only vertex at which the solution $u$ of problem \eqref{continuous weak problem} can be singular.}
\end{equation}
Finally, we point out that throughout the paper we will write $a \approx b$ and $a \lesssim b$ meaning that
there exist $c_1$, $c_2$ and $c_3$ positive constants independent of the discretization parameters, such that  $c_1 \, a \le b \le c_2 \, a$ and $ a \le c_3 \, b$ respectively;
we will also denote by $\mathbb P _\el(\E)$ and $\mathbb P _\el (\e)$ the spaces of polynomials of degree $\el$ over a polygon $\E$ and an edge $\e$ respectively.

\section {Virtual Element Spaces with non uniformly distributed degree of accuracy} \label{section VEM with non uniform degree of accuracy}
In this section, we introduce a Virtual Element Method for problem \eqref{continuous weak problem} with nonuniform local degree of accuracy.

Let $\{\taun\}$ be a sequence of polygonal decompositions of the domain $\Omega$.
The approximation will have a ``geometric layer'' structure;
hence, in the sequel, the integer $n$ will represent the number of layers used for the corner singularity refinement as in \cite{SchwabpandhpFEM}; see Section \ref{subsection geometric meshes} for the precise definition of layers.

Let $\Nu_n$ be the set of vertices  of $\taun$, $\Nu_n^{b} = \{\nu \in \Nu_n \mid \nu \in \partial \Omega\}$ be the subset of boundary vertices,
$\mathcal E _n$ be the set of edges $\e$ of $\taun$, $\mathcal E _n ^b = \{e\in \mathcal E_n \mid e\subseteq \partial \Omega\}$ be the subset of boundary edges.
To each $\E \in \taun$, we associate $h_{\E}=\diam (\E)$, $\Nu^{\E}=\{\nu \in \Nu_n \mid \nu \in \partial \E\}$ and $\mathcal E ^{\E}=\{e\in \mathcal E_n \mid e\subseteq \partial \E\}$.
We require the two following basic assumptions on the regularity of the decomposition:
\begin{itemize}
\item[$(\mathbf{D1})$] $\forall \E \in \taun$, $\E$ is star-shaped with respect to a ball of radius greater than or equal to $h_{\E} \, \gamma$, where $\gamma$ is a positive constant independent of the decompositions;
see \cite{BrennerScott} for the definition of star-shapedness.
We note that this condition can be satisfied by possibly many balls. Henceforth, we fix for each $\E\in \taun$ a unique ball $B(\E)$.
\item[$(\mathbf{D2})$] $\forall \E \in \taun$, $\forall \e \in \mathcal E ^{\E}$, $|\e|\ge h_E \, \widetilde\gamma$,
where $\widetilde \gamma$ is a positive constant independent of the decompositions. Moreover, $\forall \E \in \taun$, $\card (\mathcal E^\E)$ is uniformly bounded.
\end{itemize}
More technical assumptions on the mesh will be introduced in Section \ref{section exponential convergence for corner singularity on geometric meshes} for the construction of proper geometric meshes.
\begin{remark} \label{remark regular subtriangulation}
Assuming that (\textbf{D1}) and (\textbf{D2}) hold true, then the following is also valid.
The subtriangulation $\tautilden(\E)$ of $\E$ obtained by joining the vertices of $\E$ to the center of the ball $B(\E)$ introduced in assumption (\textbf{D1})
is made of triangles that are star-shaped with respect to a ball of radius greater than or equal to $\gamma_1 \, h_\T$,
$h_\T$ being the diameter of $\T$, $\forall \T \in \tautilden(\E)$, and $\gamma_1$ being a positive constant independent of the decompositions.
\end{remark}

Given $\E \in \taun$, let $i_{\E}$ be the position of polygon $\E$ in the ordered sequence $\taun$. Let $\mathbf p \in \mathbb N^{\text{card}(\taun)}$.
We associate to each $\E \in \taun$ the local degree of accuracy $\p_{i_{\E}}=(\mathbf \p)_{i_{\E}}$. In order to simplify the notation, we write $\pE := p_{i_{\E}}$.

Henceforth, we assume that $\taun$ is a \emph{conforming} decomposition into polygons of $\Omega$, i.e., for all edges $\e \in \mathcal E$,
either $\e$ belongs to two polygons if it is an \emph{internal} edge or it belongs to a single polygon if it is a \emph{boundary} edge.

In the former case, it must hold that there exist $\E_1$, $\E_2\in \taun$ such that $\e\in \mathcal E^{\E_1} \cap \mathcal E^{\E_2}$;
we associate to $\e$ the degree $\pe=\max \{\p_{\E_1}, \p_{\E_2}\}$, that is we adopt the so-called \emph{maximum rule}; see Remark \ref{remark degree on the edges} for further comments.
In the latter case, let $\E\in\taun$ be the unique polygon in the decomposition such that $\e \in \mathcal E^{\E}$; we associate to $\e$ the degree $\pe=\pE$.

Let $\E \in \taun$. We firstly define the space of piecewise continuous polynomials on the boundary of $\E$:
\begin{equation} \label{definition local non uiniform boundary}
\mathbb B (\partial \E) := \{\vn \in \mathcal C ^0 (\partial \E) \mid \vn|_\e \in \mathbb P _{\pe}(\e),\; \forall \e \in \mathcal E ^{\E}\}.
\end{equation}
The local virtual space on $\E$ reads:
\begin{equation} \label{local non uniform virtual space}
\VnE:=\{\vn \in H^1(\E) \mid \Delta \vn \in \mathbb P_{\pE -2}(\E) \text{ and } \vn \in \mathbb B (\partial \E)\},
\end{equation}
with the convention $\mathbb P_{-1} (\E) = 0$ and where $\mathbb B(\partial \E)$ is defined in \eqref{definition local non uiniform boundary}.

Definition \eqref{local non uniform virtual space} and the maximum rule immediately imply that $\mathbb P _{\pE} (\E)\subseteq \VnE$.

We associate with the local space the following set of degrees of freedom:
\begin{itemize}
\item the values at the vertices of $\E$;
\item the values at $\pe-1$ internal nodes (e.g. Gau\ss-Lobatto nodes) for all $\e\in \mathcal E^{\E}$;
\item the scaled internal moments of the form $\frac{1}{\vert \E \vert} \int _{\E} q_{\boldalpha} \vn$, where $\{q_{\boldalpha}\}_{|\boldalpha|=0}^{\pE- 2}$ is a properly chosen basis of $\mathbb P_{\pE-2}(\E)$;
			see \cite{hitchhikersguideVEM} for a possible explicit choice of the basis.
\end{itemize}
This is in fact a set of degrees of freedom for the local space \eqref{local non uniform virtual space}; see \cite{VEMvolley}.
If we set $\dof_i$ the $i$-th degree of freedom, $i=1,\dots,\dim(\VnE)$, then we can define the local virtual canonical basis $\{\varphi_j,\, j=1,\dots, \dim(\VnE)\}$ by:
\begin{equation} \label{definition canonical basis}
\dof  _i(\varphi_j) = \delta_{i,j},\quad \forall i,\,j=1,\dots, \dim(\VnE).
\end{equation}\\
The global virtual space is obtained by matching the boundary degrees of freedom on each edge, i.e.:
\begin{equation} \label{global non uniform virtual space}
\Vn := \{\vn \in \mathcal C ^0 (\overline \Omega) \;\mid\; \vn|_{\E}\in \VnE,\; \forall \E \in \taun;\; \vn|_{\partial \Omega} =0\}.
\end{equation}

We note that we can split the global continuous bilinear form $a(\cdot, \cdot)$, introduced with the continuous problem \eqref{continuous weak problem}, into a sum of local contributions as follows:
\begin{equation} \label{splitting the continuous bilinear form}
a(\u,v) = \sum _{\E \in \taun} a^\E(\u,v),\quad \text{ where } \quad a^\E (\u, v) = (\nabla \u, \nabla v)_{0,\E}.
\end{equation}
We observe that we cannot compute the bilinear form $a(\cdot, \cdot)$ applied on virtual functions since it is not possible in principle to know the values of such functions at any internal points of each polygon.
The same argument applies to the computation of the right-hand side.
For this reason, we must approximate both the stiffness matrix and the right-hand side.

Thus, the structure of VEM approximation is based on the two following ingredients which will be defined in what follows:
\begin{itemize}
\item a symmetric bilinear form $\an: \Vn \times \Vn \rightarrow \mathbb R$, which we decompose into a sum of local symmetric bilinear forms $\an^{\E}: \VnE \times \VnE \rightarrow \mathbb R$ as follows:
\begin{equation} \label{global bilinear form}
\an (\vn, w_n) = \sum _{\E \in \taun} \an ^{\E} (\vn, w_n),\; \forall \vn, w_n \in \Vn;
\end{equation}
\item a piecewise discontinuous polynomial $\fn$,which is piecewise of degree $\pE$, and the associated linear functional $( \fn,\cdot )_{0,\Omega}$.
\end{itemize}
The discrete bilinear form $a_n(\cdot, \cdot)$ and the discrete right-hand side $\fn$ are chosen in such a way that the discrete counterpart of \eqref{continuous weak problem}
\begin{equation} \label{discrete weak problem}
\text{find } \un\in \Vn\text{ such that } \an(\un,\vn)=( \fn,\vn )_{0,\E},\, \forall \vn \in \Vn
\end{equation}
is well-posed and it is possible to recover \emph{local} $hp$-estimates analogous to those proved in \cite{hpVEMbasic}.

We start by discussing the construction of the discrete bilinear form.
We require that $\anE$ in \eqref{global bilinear form} satisfy the two following assumptions:
\begin{itemize}
\item [($\mathbf {A1}$)] \textbf{polynomial consistency}: $\forall \E \in \taun$, it must hold:
\begin{equation} \label{pE consistency}
a^{\E} (q, \vn) = \an ^{\E} (q,\vn),\quad \forall q\in \mathbb P_{\pE}(\E),\, \forall \vn \in \VnE;
\end{equation}
\item [($\mathbf {A2}$)]\textbf{local stability}: $\forall \E \in \taun$, it must hold
\begin{equation} \label{stability an}
\alpha_* (\E) \vert \vn \vert^2_{1,\E} \le \an^{\E} (\vn,\vn) \le \alpha^* (\E) \vert \vn\vert_{1,\E}^2,\quad \forall \vn \in \VnE,
\end{equation}
where $0<\alpha_*(\E)\le \alpha^*(\E)< +\infty$ are two constants, which may depend only on the local space $\VnE$.
\end{itemize}
On each $\E \in \taun$, we can introduce a local energy projector $\Pi^{\nabla}_{\pE,\E}: \VnE \rightarrow \mathbb P_{\pE}(\E)$ via
\begin{subnumcases}{}
a^{\E}(q, \Pi^{\nabla}_{\pE,\E} \vn - \vn)=0 & $\forall q\in \mathbb P_{\pE}(\E),\; \forall \vn \in \VnE$, \label{pi nabla 1}\\
\int_{\partial \E} (\Pi^{\nabla}_{\pE,\E} \vn - \vn )ds=0 & $\forall \vn \in \VnE.$ \label{pi nabla 2} 
\end{subnumcases}
When no confusion occurs, we will write $\Pinablap$ in lieu of $\Pi^{\nabla}_{\pE,\E}$.

Note that condition \eqref{pi nabla 2} only fixes an additive constant of the projection and can be modified if necessary, \cite{equivalentprojectorsforVEM, VEMvolley}.
Importantly, this local energy projector can be computed by means of the degrees of freedom of space \eqref{local non uniform virtual space}, see \cite{hitchhikersguideVEM,VEMvolley}, without the need of knowing explicitely functions in the virtual space.

In \cite{VEMvolley,hitchhikersguideVEM}, it was also shown that a \emph{computable} candidate for $\an^{\E_i}$ may have the following form:
\begin{equation} \label{choice of the discrete local bilinear form}
\an ^{\E}(\vn, \wn) = a^{\E} (\Pinablap \vn, \Pinablap \wn)  +  S^{\E} (\vn -\Pinablap \vn, \wn -\Pinablap \wn),\quad \forall \vn,\, \wn \in \VnE,
\end{equation}
where $S^{\E}$ is any \emph{computable} symmetric bilinear form on $\VnE$ such that
\begin{equation} \label{bilinear form S}
c_{*}(\E) \vert \vn \vert_{1,\E}^2  \lesssim S^{\E} (\vn, \vn) \lesssim c^{*}(\E) \vert \vn \vert^2_{1,\E},\quad \forall\, \vn \in \VnE \,\text{ with } \Pinablap \vn=0,
\end{equation}
where $0<c_*(\E)\le c^*(\E)< +\infty$ are two constants, depending possibly on the local space $\ker( \Pinablap )$.
In \cite{VEMvolley} it was shown that \eqref{bilinear form S} implies \eqref{stability an} with:
\begin{equation} \label{relation alpha c}
\alpha_*(\E)=\min(1,c_*(\E)),\;\alpha^*(\E)=\max(1,c^*(\E)),\quad \forall \E \in \taun.
\end{equation}

Now, we introduce a \emph{computable} discrete loading term $\fn$.
Let $S^{\mathbf p, -1}(\Omega, \taun)$ be the set of piecewise discontinuous polynomials over the decomposition $\taun$ of degree $\pE$ on each $\E \in \taun$.
For $\el \in \mathbb N$, let $\Pi^{0}_{\el} := \Pi^{0}_{\el, \E}$ be the $L^2(\E)$ projector from the local space \eqref{local non uniform virtual space} to $\mathbb P_{\el} (\E)$, the space of polynomials of degree $\el$ over $\E$;
such a projector can easily be computed whenever $\el \le \pE -2$ by means of the internal degrees of freedom of the space \eqref{local non uniform virtual space}, see \cite{hitchhikersguideVEM}.

We define the discrete loading term as follows: $\fn \in S^{\mathbf p, -1}(\Omega, \taun)$ is such that
\begin{equation} \label{discrete loading term}
(\fn, \vn)_{0,\Omega} = \sum_{\E \in \taun} ( \fn, \vn )_{0,\E},\quad \text{where} \quad ( \fn, \vn )_{0,\E}:=  \int_\E \Pizpmd f \vn,\quad \forall \vn \in \Vn.
\end{equation}
A deeper analysis on the discrete loading term can be found in \cite{equivalentprojectorsforVEM} and \cite{VEMelasticity}.

We remark that in this paper we will not consider the case of approximation with $\pE=1$ in order to avoid technical discussions on the right-hand side.

\begin{remark} \label{remark degree on the edges}
We point out that in the definition of the local Virtual Space \eqref{local non uniform virtual space}, we fixed the degree of the edge to be the maximum of the degree of the two adjacent polygons (\emph{maximum-rule}).
One could also fix such an edge degree to be the minimum of the degree of the neighbouring polygons (\emph{minimum-rule}).
The first choice leads to $\mathbb P_{\pE}(\E) \subseteq \VnE$;
therefore, it is possible to recover local (i.e. on each polygon) classical $hp$-estimates, see \cite{hpVEMbasic}.
On the other hand, in view of Section \ref{section exponential convergence for corner singularity on geometric meshes} also the choice of the minimum would yield the same convergence result.
\end{remark}
Let $\bigfnE$, $\E\in \taun$, be the smallest positive constants such that:
\begin{equation} \label{big F n}
\begin{split}
&|(\fn,\vn)_{0,\E}          - (f,\vn)_{0,\E}| \le \bigfnE |\vn|_{1,\E},          \; \forall \vn \in \VnE
\end{split}
\end{equation}
and let
\begin{equation} \label{local parameter stability}
\alpha (\E) := \frac{1+\alpha^*(\E)}{\min_{\E ' \in \taun}\alpha_*(\E')}  ,\qquad \forall \E \in \taun,
\end{equation}
where $\alpha_*(\E)$ and $\alpha^*(\E)$ are introduced in \eqref{stability an}.

We show how the energy error $\vert \u -\un \vert_{1,\E}$ can be bounded, $\u$ and $\un$ being respectively the solutions of \eqref{continuous weak problem} and \eqref{discrete weak problem}.
We carry out, in particular, an abstract error analysis which is similar to the one presented in \cite{VEMvolley}; nevertheless, we decide to show the details, since assumption (\textbf{A2}) is here weaker than its $h$ counterpart
in \cite{VEMvolley}, where the stability constants $\alpha_*(\E)$ and $\alpha^*(\E)$ are assumed to be independent of the local spaces.
\begin{lem} \label{lemma local VEM decomposition}
Assume that (\textbf{A1}) and (\textbf{A2}) hold.
Let $\u$ and $\un$ be the solutions of problems \eqref{continuous weak problem} and \eqref{discrete weak problem} respectively.
Then, for all $\uI \in \Vn$ and for all $\upi \in S^{\mathbf p, -1}(\Omega, \taun)$, it holds that
\begin{equation}\label{global VEM decomposition}
\begin{split}
\vert \u -\un \vert _{1,\Omega} 	\le \sum_{\E \in \taun} \alpha(\E) \left\{ \bigfnE + \vert \u - \upi \vert_{1,\E} + \vert \u -\uI \vert_{1,\E} \right\},\\
\end{split}
\end{equation}
where $\bigfnE$ and $\alpha(\E)$ are defined in \eqref{big F n} and \eqref{local parameter stability} respectively.
\end{lem}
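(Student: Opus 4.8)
The plan is to follow the classical Strang-type argument for non-conforming/variational-crime discretizations, localized over the polygons. Fix an arbitrary $\uI\in\Vn$ and an arbitrary $\upi\in S^{\mathbf p,-1}(\Omega,\taun)$, and set $\delta_n:=\un-\uI\in\Vn$. The first step is to control $\vert\delta_n\vert_{1,\Omega}$ by testing the discrete problem \eqref{discrete weak problem} with $\vn=\delta_n$. Using the global lower bound in \textbf{(A2)} (summed over $\E$, with constant $\min_{\E'\in\taun}\alpha_*(\E')$) we get
\[
\Bigl(\min_{\E'\in\taun}\alpha_*(\E')\Bigr)\,\vert\delta_n\vert_{1,\Omega}^2
\le \sum_{\E\in\taun}\anE(\delta_n,\delta_n)
= \sum_{\E\in\taun}\Bigl[\anE(\un,\delta_n)-\anE(\uI,\delta_n)\Bigr].
\]
For the first term I would use \eqref{discrete weak problem} to replace $\sum_\E\anE(\un,\delta_n)$ by $(\fn,\delta_n)_{0,\Omega}=\sum_\E(\fn,\delta_n)_{0,\E}$, then add and subtract $(f,\delta_n)_{0,\E}$ and use \eqref{big F n} to bound $(\fn,\delta_n)_{0,\E}-(f,\delta_n)_{0,\E}$ by $\bigfnE\vert\delta_n\vert_{1,\E}$; the leftover exact term $\sum_\E(f,\delta_n)_{0,\E}=a(u,\delta_n)=\sum_\E a^\E(u,\delta_n)$ by the weak formulation \eqref{continuous weak problem}.

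The second, and central, step is the local manipulation of $a^\E(u,\delta_n)-\anE(\uI,\delta_n)$ on each polygon. Here I would insert the polynomial $\upi|_\E\in\mathbb P_{\pE}(\E)$ as a pivot and exploit the consistency property \textbf{(A1)}: write
\[
a^\E(u,\delta_n)-\anE(\uI,\delta_n)
= a^\E(u-\upi,\delta_n) + \bigl[a^\E(\upi,\delta_n)-\anE(\upi,\delta_n)\bigr]
  + \anE(\upi-\uI,\delta_n),
\]
and observe that the bracketed middle term vanishes by \eqref{pE consistency} (since $\upi|_\E\in\mathbb P_{\pE}(\E)$ and $\delta_n\in\VnE$). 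The first term is bounded by Cauchy--Schwarz by $\vert u-\upi\vert_{1,\E}\vert\delta_n\vert_{1,\E}$. For the last term I would apply the upper bound in \eqref{stability an} together with Cauchy--Schwarz in the energy inner product $\anE$, i.e. $\anE(\upi-\uI,\delta_n)\le \anE(\upi-\uI,\upi-\uI)^{1/2}\anE(\delta_n,\delta_n)^{1/2}\le \alpha^*(\E)\vert\upi-\uI\vert_{1,\E}\vert\delta_n\vert_{1,\E}$, and then estimate $\vert\upi-\uI\vert_{1,\E}\le\vert u-\upi\vert_{1,\E}+\vert u-\uI\vert_{1,\E}$ by the triangle inequality. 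Collecting all three pieces over $\E$ and dividing through by $\vert\delta_n\vert_{1,\Omega}$ (and using $\vert\delta_n\vert_{1,\E}\le\vert\delta_n\vert_{1,\Omega}$ factorwise after Cauchy--Schwarz on the sum, or more simply bounding $\sum_\E c_\E\vert\delta_n\vert_{1,\E}\le(\sum_\E c_\E)\vert\delta_n\vert_{1,\Omega}$ is too lossy — better to keep $\vert\delta_n\vert_{1,\E}\le\vert\delta_n\vert_{1,\Omega}$ termwise) gives
\[
\vert\delta_n\vert_{1,\Omega}\le \sum_{\E\in\taun}\frac{1+\alpha^*(\E)}{\min_{\E'\in\taun}\alpha_*(\E')}\Bigl\{\bigfnE+\vert u-\upi\vert_{1,\E}+\vert u-\uI\vert_{1,\E}\Bigr\}=\sum_{\E\in\taun}\alpha(\E)\{\cdots\},
\]
recognizing the definition \eqref{local parameter stability} of $\alpha(\E)$ (the factor $1$ coming from the consistency term, the factor $\alpha^*(\E)$ from the stability term, both divided by the global coercivity constant).

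The final step is a plain triangle inequality: $\vert u-\un\vert_{1,\Omega}\le\vert u-\uI\vert_{1,\Omega}+\vert\uI-\un\vert_{1,\Omega}=\vert u-\uI\vert_{1,\Omega}+\vert\delta_n\vert_{1,\Omega}$, and then $\vert u-\uI\vert_{1,\Omega}^2=\sum_\E\vert u-\uI\vert_{1,\E}^2\le(\sum_\E\vert u-\uI\vert_{1,\E})^2$, which can be absorbed into the sum on the right-hand side since $\alpha(\E)\ge 1$ by construction (as $\alpha^*(\E)\ge\alpha_*(\E)\ge\min_{\E'}\alpha_*(\E')$ forces $\alpha(\E)=(1+\alpha^*(\E))/\min_{\E'}\alpha_*(\E')\ge 1$). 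The main obstacle, and the only genuinely delicate point, is the bookkeeping of the stability constants: one must be careful to use the \emph{global} coercivity constant $\min_{\E'\in\taun}\alpha_*(\E')$ in the very first testing step (rather than the local $\alpha_*(\E)$), since otherwise the constants would not assemble into the clean per-element factor $\alpha(\E)$ — this is precisely the point the authors flag as the difference from the $h$-version analysis in \cite{VEMvolley}, where the $\alpha_*(\E)$ are uniformly bounded below and the distinction is immaterial. Everything else is routine Cauchy--Schwarz and triangle inequalities.
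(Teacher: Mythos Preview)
Your proposal is correct and follows essentially the same Strang-type argument as the paper: coercivity via \textbf{(A2)} with the global constant $\min_{\E'}\alpha_*(\E')$, insertion of $\upi$ as a pivot to exploit consistency \textbf{(A1)}, the upper stability bound on the remaining $\anE$ term, and a final triangle inequality. The only cosmetic difference is that the paper applies Cauchy--Schwarz on the sum $\sum_\E c_\E\,\vert\delta_n\vert_{1,\E}\le\bigl(\sum_\E c_\E^2\bigr)^{1/2}\vert\delta_n\vert_{1,\Omega}$ before passing to the $\ell^1$ bound, whereas you bound $\vert\delta_n\vert_{1,\E}\le\vert\delta_n\vert_{1,\Omega}$ termwise; both routes yield the stated inequality.
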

\begin{proof}
Given any $\upi \in S^{\mathbf p, -1}(\Omega, \taun)$ and $\uI \in \Vn$:
\[
\begin{split}
&\vert \un -\uI \vert^2_{1,\Omega}	= \sum_{\E\in \taun} \vert \un -\uI \vert^2_{1,\E} \overbrace{\le}^{(\mathbf{A2})} \sum_{\E \in \taun} \alpha_*^{-1}(\E) \anE(\un-\uI,\un-\uI)\\
& \overbrace{\le}^{(\textbf{A1}),\, \eqref{continuous weak problem},\, \eqref{discrete weak problem}}  \left( \max_{\E'\in \taun} \alpha_* ^{-1}(\E') \right) \sum_{\E \in \taun} \left\{ ( \fn-f, \un-\uI )_{0,\E}  - \anE(\uI-\upi, \un-\uI) - \aE(\upi-u,\un-\uI)   \right\} \\
& \overbrace{\le}^{(\textbf{A1}), \, \eqref{big F n}} 
\left( \max_{\E'\in \taun} \alpha_* ^{-1}(\E') \right) \sum_{\E \in \taun} \left\{ \bigfnE \vert \uI-\un \vert_{1,\E} +\alpha^*(\E)\vert \uI - \upi \vert _{1,\E} \vert \un -\uI \vert_{1,\E} + \vert \u -\upi\vert _{1,\E} \vert\un-\uI\vert_{1,\E}   \right\} \\
& \le  \left( \max_{\E'\in \taun} \alpha_* ^{-1}(\E') \right) \left( \sum_{\E\in \taun} \left( \bigfnE + (1+\alpha^*(\E)) \vert \u-\upi \vert_{1,\E} + \alpha^*(\E) \vert \u-\uI\vert_{1,\E}  \right)^2   \right) ^{\frac{1}{2}} \vert \un -\uI\vert_{1,\Omega}.
\end{split}
\]
where the Cauchy-Schwarz inequality has been used in the last step.
Applying a triangular inequality, we get:
\[
\vert \u - \un \vert_{1,\Omega} \le \sum_{\E \in \taun} \frac{1 + \alpha^*(\E)}{\min_{\E' \in \taun} \alpha_*(\E')} \left\{ \bigfnE + \vert \u - \upi \vert_{1,\E} + \vert \u - \uI \vert_{1,\E}   \right\}.
\]
This finishes the proof.
\end{proof}

\section{Stability } \label{section stability}
In this section, we present an explicit choice for the stabilizing bilinear form $\SE$ introduced in \eqref{bilinear form S} and we discuss the associated stability bounds \eqref{bilinear form S} in terms of the local degree of accuracy.
Our choice for the stabilization is the following:
\begin{equation} \label{stability choice}
\SE(\un,\vn) =
\frac{\pE}{h_\E} (\un,\vn)_{0,\partial \E} + \frac{\pE^2} {h_\E^2} (\Pizpmd \un, \Pizpmd \vn)_{0,\E}.
\end{equation}
We note that this local stabilization term is explicitely computable by means of the local degrees of freedom, since on the boundary virtual functions are known polynomials and the $L^2$  projections are computable using only the internal degrees of freedom,
see \cite{hitchhikersguideVEM}.

Following the guidelines of \cite[formula (4.5.61)]{SchwabpandhpFEM}, that is the $\p$-version of the Aubin-Nitsche duality argument, it holds for a convex $\E$:
\begin{equation} \label{Aubin Nitsche convex case}
\Vert \vn - \Pinablap \vn \Vert_{0,\E} \lesssim \frac{h_\E}{\pE} \vert \vn -\Pinablap \vn \vert_{1,\E},\quad \forall \vn \in \VnE.
\end{equation}
Note that in order to apply the Aubin-Nitsche argument we use the fact that $\vn\in \ker(\Pinablap)$, which guarantees that $\vn - \Pinabla \vn$ has zero average on the boundary.

Assume now that $\E$ is nonconvex. Let $\pi < \omega _\E< 2\pi$ the largest angle of $\E$. Then, the Aubin-Nitsche analysis in addition to interpolation theory, see \cite{triebel, tartar}, can be refined giving:
\begin{equation} \label{modified Aubin Nitsche}
\Vert \vn - \Pinabla \vn \Vert_{0,\E} \lesssim \left( \frac{h_\E}{\pE} \right)^{\frac{\pi}{\omega_\E}} \vert \vn -\Pinablap \vert _{1,\E},\quad \forall \vn \in \VnE.
\end{equation}
We now prove the following result.
\begin{thm} \label{theorem stability bounds}
Assume that $\pE$, the degree of accuracy of the method on the element $\E$, coincides with the polynomial degrees $\pe$, for all edges $\e \in \mathcal E^\E$ of polygon $\E$.
Then, using definition \eqref{stability choice}, the bounds in \eqref{bilinear form S} hold with:
\begin{equation} \label{stability bound constants}
\c_*(\E)\ge
\pE^{-5},\quad\quad
\c^*(\E)\le\begin{cases}
1 & \text{if } \E \text{ is convex},\\
\pE^{2 \left( 1- \frac{\pi}{\omega_\E}  \right)} & \text{otherwise},
\end{cases}
\end{equation}
where $\omega_\E$ denotes the largest angle of $\E$.
\end{thm}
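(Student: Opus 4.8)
The plan is to establish the two inequalities in \eqref{bilinear form S} separately, for $\vn\in\VnE$ with $\Pinablap\vn=0$. Throughout I set $\psi:=\Delta\vn\in\mathbb P_{\pE-2}(\E)$ (see \eqref{local non uniform virtual space}) and I use $\vn=\vn-\Pinablap\vn$, so that the Aubin--Nitsche bounds \eqref{Aubin Nitsche convex case} and \eqref{modified Aubin Nitsche} apply to $\vn$ itself.

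\emph{Upper bound (continuity of $\SE$).} For the boundary term in \eqref{stability choice} I would use the scaled multiplicative trace inequality $\|\vn\|_{0,\partial\E}^2\lesssim h_\E^{-1}\|\vn\|_{0,\E}^2+\|\vn\|_{0,\E}\,|\vn|_{1,\E}$, which follows from the standard trace inequality applied on each triangle of the shape-regular subtriangulation of Remark \ref{remark regular subtriangulation}; for the interior term I would simply use $\|\Pizpmd\vn\|_{0,\E}\le\|\vn\|_{0,\E}$. In both cases the factor $\|\vn\|_{0,\E}$ is then controlled by \eqref{Aubin Nitsche convex case} if $\E$ is convex and by \eqref{modified Aubin Nitsche} otherwise. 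Inserting these bounds, collecting the powers of $h_\E$ and $\pE$ and recalling that $\pE\ge 2$, the weights $\pE/h_\E$ and $\pE^2/h_\E^2$ built into \eqref{stability choice} absorb exactly the $h_\E$-powers produced, leaving $\SE(\vn,\vn)\lesssim|\vn|_{1,\E}^2$ when $\E$ is convex and $\SE(\vn,\vn)\lesssim\pE^{2(1-\pi/\omega_\E)}|\vn|_{1,\E}^2$ otherwise, i.e. the asserted bound on $\c^*(\E)$.

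\emph{Lower bound (coercivity of $\SE$).} Here I would split $\vn=z+w$, where $w\in H^1(\E)$ is harmonic with $w|_{\partial\E}=\vn|_{\partial\E}$ and $z:=\vn-w\in\VnE\cap H^1_0(\E)$ satisfies $\Delta z=\psi$. Since $z$ vanishes on $\partial\E$ and $w$ is harmonic, $\int_\E\nabla z\cdot\nabla w=0$, hence $|\vn|_{1,\E}^2=|z|_{1,\E}^2+|w|_{1,\E}^2$, and it suffices to bound each term by $\pE^5\,\SE(\vn,\vn)$. For the \emph{harmonic part}, $w$ minimises the Dirichlet energy among all $H^1(\E)$-extensions of the continuous, edgewise degree-$\pE$ datum $\vn|_{\partial\E}$; testing with an explicit competitor (polynomial of degree $\pE$ on the boundary edges, piecewise linear along the segments joining the vertices of $\E$ to the centre of $B(\E)$) and invoking a standard $hp$ boundary-lifting/inverse estimate gives $|w|_{1,\E}^2\lesssim\pE^2 h_\E^{-1}\|\vn\|_{0,\partial\E}^2\lesssim\pE\,\SE(\vn,\vn)$. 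For the \emph{bubble part}, integration by parts gives $|z|_{1,\E}^2=-\int_\E z\,\psi=-\int_\E\vn\,\psi+\int_\E w\,\psi$; since $\psi\in\mathbb P_{\pE-2}(\E)$ the first integral equals $-\int_\E(\Pizpmd\vn)\,\psi$, so $|z|_{1,\E}^2\le\big(\|\Pizpmd\vn\|_{0,\E}+\|w\|_{0,\E}\big)\|\psi\|_{0,\E}$. The maximum principle together with a Nikolskii inequality on the edge polynomials (this is where the hypothesis $\pe=\pE$ enters) yields $\|w\|_{0,\E}\lesssim\pE\,h_\E^{1/2}\|\vn\|_{0,\partial\E}$, while the polynomial inverse estimate of Appendix \ref{section appendix A} — the one not previously available in the literature — provides $\|\psi\|_{0,\E}=\|\Delta z\|_{0,\E}\lesssim\pE^2 h_\E^{-1}|z|_{1,\E}$. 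Substituting, dividing by $|z|_{1,\E}$, squaring and regrouping the $h_\E$- and $\pE$-powers against the two terms of \eqref{stability choice} gives $|z|_{1,\E}^2\lesssim\pE^5\,\SE(\vn,\vn)$; adding the two contributions yields $|\vn|_{1,\E}^2\lesssim\pE^5\,\SE(\vn,\vn)$, i.e. $\c_*(\E)\gtrsim\pE^{-5}$, with all hidden constants depending only on the mesh-regularity parameters $\gamma$, $\widetilde\gamma$ of $(\mathbf{D1})$, $(\mathbf{D2})$ and on $\card(\mathcal E^\E)$.

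\emph{Main obstacle.} The crux is the last displayed bound on $\|\Delta z\|_{0,\E}$: because $z$ is a \emph{virtual} (non-polynomial) bubble, no classical polynomial inverse estimate applies to it directly, and the obvious integration by parts moving $\Delta$ onto a polynomial test function leaves an uncontrollable boundary term $\int_{\partial\E}\psi\,\partial_n z$; a genuinely new inverse estimate on the polynomial $\Delta z$ (Appendix \ref{section appendix A}) is therefore unavoidable, and it is the step that dictates the final exponent $5$. A secondary, purely bookkeeping difficulty is that \eqref{modified Aubin Nitsche} carries negative non-integer powers of $h_\E$ in the nonconvex case, which must be tracked carefully so that they cancel against the $h_\E$-weights in \eqref{stability choice} and leave precisely the factor $\pE^{2(1-\pi/\omega_\E)}$.
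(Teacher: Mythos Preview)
Your proposal is correct and, for the upper bound on $\c^*(\E)$, essentially coincides with the paper's argument (multiplicative trace inequality plus the Aubin--Nitsche bounds \eqref{Aubin Nitsche convex case}--\eqref{modified Aubin Nitsche}); the only cosmetic difference is that the paper inserts and subtracts $\vn$ before bounding $\Vert\Pizpmd\vn\Vert_{0,\E}$, whereas you use $\Vert\Pizpmd\vn\Vert_{0,\E}\le\Vert\vn\Vert_{0,\E}$ directly.

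For the lower bound on $\c_*(\E)$ you take a genuinely different route. The paper does \emph{not} decompose $\vn$; it integrates by parts once on the full function,
\[
\vert\vn\vert_{1,\E}^2 = -\int_\E \Delta\vn\,\Pizpmd\vn + \int_{\partial\E}\partial_n\vn\,\vn,
\]
bounds the bulk term via the same $H^{-1}$ inverse estimate of Appendix~\ref{section appendix A} applied to $\Delta\vn$ (using $\Vert\Delta\vn\Vert_{-1,\E}\le\vert\vn\vert_{1,\E}$, which holds without any splitting since the test functions in $H^{-1}$ lie in $H^1_0(\E)$), and bounds the boundary term with the Neumann trace inequality $\Vert\partial_n\vn\Vert_{-1/2,\partial\E}\lesssim\vert\vn\vert_{1,\E}+\Vert\Delta\vn\Vert_{0,\E}$ together with a one-dimensional $hp$ inverse estimate $\Vert\vn\Vert_{1/2,\partial\E}\lesssim\pE\Vert\vn\Vert_{0,\partial\E}$. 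Your harmonic/bubble splitting $\vn=w+z$ reaches the same $\pE^{-5}$ via different auxiliary tools (an explicit $hp$ lifting bound for $w$, the maximum principle plus a Nikolskii inequality for $\Vert w\Vert_{0,\E}$). What the paper's approach buys is economy: no competitor construction, no maximum principle, no $L^\infty$ estimates. What your approach buys is a cleaner separation of boundary and interior effects---in particular you get the sharper bound $\vert w\vert_{1,\E}^2\lesssim\pE\,\SE(\vn,\vn)$ for the harmonic part, showing that the full $\pE^5$ loss comes entirely from the bubble $z$; this could be useful if one later tried to improve the exponent. Your ``main obstacle'' commentary is slightly off: once you work with $z\in H^1_0(\E)$ there is no boundary term in the relevant integration by parts, and the paper's route shows that the $H^{-1}$ inverse estimate applies just as well to $\Delta\vn$ directly.
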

\begin{proof}
We assume without loss of generality that the size of polygon $\E$ is 1. The general result follows from a scaling argument.

We start by proving the estimate on $\c_*(\E)$. Integrating by parts, we obtain for $\vn \in \ker (\Pinablap)$:
\begin{equation} \label{integration by parts stability}
\vert \vn \vert_{1,\E}^2 = \int_{\E} \nabla \vn \cdot \nabla \vn = \int_{\E} -\Delta \vn \Pizpmd \vn + \int_{\partial \E} \frac{\partial \vn}{\partial n} \vn.
\end{equation}

We split our analysis into two parts. We firstly investigate the integral over $\E$ in \eqref{integration by parts stability}.
For this purpose, we need a technical result, namely the following $hp$ polynomial inverse estimate in two dimensions,
see Corollary \ref{corollary inverse polynomial estimate with negative norm} (which can be applied thanks to Remark \ref{remark regular subtriangulation}):
\begin{equation} \label{inverse estimate polynomials negative norm}
\Vert \q \Vert_{0,\E} \lesssim (\pE-1)^2 \Vert \q \Vert_{-1,\E} \le \pE^2 \Vert \q \Vert_{-1,\E},\quad \forall \q \in \mathbb P_{\pE-2}(\E),
\end{equation}
where we denote with $\Vert \cdot \Vert_{-1,\E}$ the dual norm associated with $H^1_0(\E)$, i.e.
\[
\Vert \cdot \Vert _{-1,\E} = \Vert \cdot \Vert _{[H^1_0(\E)]^*} = \sup_{\Phi\in H^1_0(\E)\setminus\{0 \}} \frac{(\Phi, \cdot)_{0,\E}}{\vert \Phi \vert_{1,\E}}.
\]
Subsequently, we note that, owing to \eqref{inverse estimate polynomials negative norm}, we have:
\begin{equation} \label{estimate on virtual laplacian}
\begin{split}
&\Vert \Delta \vn \Vert _{0,\E} \lesssim \pE ^2 \Vert \Delta \vn \Vert_{-1,\E} = \pE^2 \sup_{\Phi\in H^1_0(\E)\setminus\{0 \}} \frac{(\Delta \vn, \Phi)_{0,\E}}{\vert \Phi \vert_{1,\E}}=
\pE^2 \sup_{\Phi\in H^1_0(\E)\setminus\{0 \}} \frac{(\nabla \Phi, \nabla \vn)_{0,\E}}{\vert \Phi \vert_{1,\E}}
\le \pE^2 \vert \vn \vert_{1,\E}.
\end{split}
\end{equation}
As a consequence:
\begin{equation} \label{stability inequality A}
\int_{\E} -\Delta \vn \Pizpmd \vn \le \Vert \Delta \vn \Vert_{0,\E} \cdot \Vert \Pizpmd \vn \Vert_{0,\E} \le \pE^2 \Vert \Pizpmd \vn \Vert_{0,\E} \vert \vn \vert_{1,\E}.
\end{equation}
Next, we turn our attention to the integral over $\partial \E$ in \eqref{integration by parts stability}. Applying a Neumann trace inequality (see e.g. \cite[Theorem A33]{SchwabpandhpFEM}):
\begin{equation} \label{stability inequality B}
\int_{\partial \E} \frac{\partial \vn}{\partial n} \vn \le \left \Vert \frac{\partial \vn}{\partial n} \right \Vert _{-\frac{1}{2},\partial \E} \Vert \vn \Vert_{\frac{1}{2},\partial \E}
\lesssim (\vert \vn \vert_{1,\E} + \Vert \Delta \vn \Vert_{0,\E}) \Vert \vn \Vert_{\frac{1}{2},\partial \E}.
\end{equation}
Then, we use \eqref{estimate on virtual laplacian} on the second term in the first factor and a one dimensional $hp$ inverse estimate in addition to interpolation theory on the second factor (see \cite{triebel, tartar}),
thus obtaining:
\[
\int_{\partial \E} \frac{\partial \vn}{\partial n} \vn \lesssim \pE^2 \vert \vn \vert_{1,\E} \cdot \pE \Vert \vn \Vert_{0,\partial \E}.
\]
Plugging \eqref{stability inequality A} and \eqref{stability inequality B} in \eqref{integration by parts stability}, we deduce:
\[
\vert \vn \vert^2_{1,\E} \lesssim \vert \vn \vert_{1,\E} \left\{  \pE^2 \Vert \Pizpmd \vn \Vert _{0,\E} + \pE^3 \Vert \vn \Vert_{0,\partial \E}   \right\},
\]
whence
\[
\vert \vn \vert ^2_{1,\E} \lesssim \pE^2 \left(  \pE^2 \Vert \Pizpmd \vn \Vert^2_{0,\E} \right) + \pE^5 \left(  \pE \Vert \vn \Vert^2_{0,\partial \E} \right) \le  \pE^5 \SE(\vn,\vn).
\]

Next, we estimate $\c^*(\E)$. Let $\vn \in \ker(\Pinablap)$, then:
\[
\begin{split}
& \SE(\vn,\vn) = \pE \Vert \vn \Vert^2_{0,\partial \E} + \pE^2 \Vert \Pizpmd \vn \Vert _{0,\E}^2 \\
& \lesssim \pE \Vert \vn \Vert^2_{0,\partial \E} + \pE^2 \Vert \vn -\Pizpmd \vn \Vert _{0,\E}^2 + \pE^2 \Vert \vn -\Pinablap \vn \Vert_{0,\E}^2.
\end{split}
\]
We estimate the three terms separately. We begin with the first one. Applying the multiplicative trace inequality (see e.g. \cite{BrennerScott}),
the $\p$ version of the Aubin-Nitsche duality argument \eqref{Aubin Nitsche convex case} for convex $\E$ and \eqref{modified Aubin Nitsche} for nonconvex $\E$:
\begin{equation} \label{boundary estimates aubin and multiplicative trace}
\begin{split}
\pE \Vert \vn \Vert^2_{0,\partial \E} 	&\lesssim \pE \left(\Vert \vn \Vert_{0,\E} \vert \vn \vert _{1,\E} + \Vert \vn \Vert^2_{0,\E}  \right) = \pE \left(\Vert \vn - \Pinablap \vn \Vert_{0,\E} \vert \vn \vert _{1,\E} + \Vert \vn -\Pinablap \vn \Vert^2_{0,\E}\right)\\
&\lesssim  \begin{cases}
\pE \left(\pE^{-1} \vert \vn \vert_{1,\E}^2 + \pE^{-2} \vert \vn \vert^2_{1,\E} \right) \le \vert \vn \vert_{1,\E}^2, & \text{if } \E \text{ is convex,}\\
\pE \left(\pE^{-\frac{\pi}{\omega_\E}} \vert \vn \vert_{1,\E}^2 + \pE^{-2 \frac{\pi}{\omega_\E}} \vert \vn \vert^2_{1,\E} \right) \le \p^{1-\frac{\pi}{\omega_\E}}\vert \vn \vert_{1,\E}^2, &\text{otherwise,}\\
\end{cases} 
\end{split}
\end{equation}
where we recall $\omega_\E$ is the largest angle in $\E$.

We now deal with the second term; using \cite[Lemma 4.1]{hpVEMbasic}:
\[
\pE^2 \Vert \vn -\Pizpmd \vn \Vert _{0,\E}^2 \lesssim \pE^2 \pE^{-2} \Vert \vn \Vert^2_{1,\E} = \Vert \vn - \Pinablap \vn \Vert^2_{1,\E} \lesssim \vert \vn \vert_{1,\E}^2,
\]
where in the last inequality we used that $\vn - \Pinablap \vn$ has zero average on $\partial \E$.

Finally, we treat the third term; using Aubin-Nitsche argument \eqref{Aubin Nitsche convex case} and its modified version for nonconvex polygon \eqref{modified Aubin Nitsche}:
\begin{equation} \label{equation where we apply Aubin Nitsche}
\pE^2 \Vert \vn - \Pinablap \vn \Vert_{0,\E}^2 \lesssim 
\begin{cases}
\pE^2 \pE ^{-2} \vert \vn -\Pinablap \vn \vert_{1,\E}^2 = \vert \vn \vert^2_{1,\E} & \text{if } \E \text{ is convex},\\
\pE^2 \pE ^{-2 \frac{\pi}{\omega_\E}} \vert \vn -\Pinablap \vn \vert_{1,\E}^2 = \pE ^{2\left( 1- \frac{\pi}{\omega_\E} \right)} \vert \vn \vert^2_{1,\E} & \text{otherwise.}\\
\end{cases}
\end{equation}
Collecting the three bounds, we obtain the claim.
\end{proof}

\begin{remark} \label{remark on stabilization theorem}
In order to keep the notation simpler, we proved Theorem \ref{theorem stability bounds} assuming that the polynomial degrees $\pe$ on each edge $\e \in \mathcal E ^\E$ coincide with the degree of accuracy $\pE$ of the local space $\VnE$;
the same result remains valid if $\pE \approx \pe$, for all $\e \in \mathcal E ^\E$. In view of the forthcoming definition \eqref{assumption mu},
the case of interest in the following will satisfy such condition and therefore, for the proof of the main result of this work, namely Theorem \ref{theorem exponential convergence}, we will not use directly Theorem \ref{theorem stability bounds},
but its nonuniform degree version.
\end{remark}

As a consequence of Theorem \ref{theorem stability bounds}, the quantity $\alpha(\E)$, defined in \eqref{local parameter stability}, can be bounded in terms of $\pE$ as follows:
\begin{equation} \label{anaisotropy dependence on p}
\alpha(\E) = \frac{1+\alpha^*(\E)}{\min_{\E' \in \taun} \alpha_*(\E')} = \frac{1+\max(1,\c^*(\E))}{\min_{\E' \in \taun} (\min(1,\c_*(\E')))} \lesssim
\begin{cases}
\max_{\E \in \taun}\pE^5 & \text{if all }\E \text{ are convex}\\
\pE^{2 \left( 1- \frac{\pi}{\omega} \right)} \max_{\E\in \taun} \pE^5 & \text{otherwise}
\end{cases}.
\end{equation}

\begin{remark}\label{remark quadrature formulas}
Owing to \cite[formula (2.14)]{bernardimaday1992polynomialinterpolationinsobolev}, we could replace the boundary term of $\SE$, defined in \eqref{stability choice}, with a spectrally equivalent algebraic expression employing Gau\ss-Lobatto nodes.
In particular, let $\Ihat=[-1,1]$ and let $\{\rho_j^{\p_{\Ihat}+1}\}_{j=0}^{\p_{\Ihat}}$ and $\{\xi_j^{\p_{\Ihat}+1}\}_{j=0}^{\p_{\Ihat}}$ be the Gau\ss-Lobatto nodes and weights on $\Ihat$ respectively. Then:
\begin{equation} \label{Bernardi Maday formula}
\c \sum_{j=0}^{\p_{\Ihat}} \q^2 (\xi_j^{\p_{\Ihat}+1}) \rho_j^{\p_{\Ihat}+1} \le \Vert \q \Vert^2_{0,\Ihat} \le \sum_{j=0}^{\p_{\Ihat}} \q^2(\xi_j^{\p_{\Ihat}+1}) \rho_j^{\p_{\Ihat}+1},\quad \forall \q \in \mathbb P_{\p_{\Ihat}}(\Ihat),
\end{equation}
where $\c$ is a positive universal constant.
We could replace in \eqref{stability choice} the $L^2$ integral on the boundary with a piecewise Gau\ss-Lobatto combination,
mapping each edge on the reference interval $\Ihat$ and using \eqref{Bernardi Maday formula};
the advantage of such a choice is that we can automatically use the \emph{nodal}
degrees of freedom on the skeleton, assuming that they have a Gau\ss-Lobatto distribution on each edge.

The boundary term of the new stabilization is now very close to the classical stabilization choice (see e.g. \cite{VEMvolley} and \cite{hpVEMbasic})
and its implementation is much easier than the implementation of \eqref{stability choice}, where one should reconstruct polynomials on each edge;
in fact, it suffices to take instead of the Euclidean inner product of \emph{all} the degrees of freedom only the boundary one with some Gau\ss-Lobatto weights.

For additional issues concerning the stabilization (only for the $h$ version of VEM) see \cite{beiraolovadinarusso_stabilityVEM},
while for more details concerning the implementation of the method we refer to \cite{hitchhikersguideVEM}.
\end{remark}

\subsection{Numerical tests for the stability bounds} \label{subsection  numerical tests for the stability bounds}
In Theorem \ref{theorem stability bounds}, we proved the stability bounds \eqref{bilinear form S} for a possible choice of $\SE$.
Such bounds, which also reflect on $\alpha_*(\E)$ and $\alpha^*(\E)$ introduced in \eqref{stability an}, are rigorously proven but have a quite stray dependence on $\p$.
In the following, we check numerically whether the dependence on $\p$ of the above-mentioned constants is sharp.

In order to do that, we note that finding $\alpha_*(\E)$ and $\alpha^*(\E)$ in \eqref{stability an} is equivalent to find the minimum and maximum eigenvalues $\lambda_{min}$ and $\lambda_{max}$ of the generalized eigenvalue problem:
\begin{equation} \label{generalized eigenvalue problem}
\mathbf {A}_n^\E \mathbf {\vn} = \lambda \mathbf {A}^\E \mathbf {\vn},\quad \vn \in \VnE,
\end{equation}
Here, $\mathbf A^\E_n$ and $\mathbf A^\E \in \mathbb R^{\dim(\VnE) \times \dim(\VnE)}$ are defined as
\[
(\mathbf {A}^\E_n) _{i,j} = \anE(\varphi_i, \varphi _j),\qquad (\mathbf A^\E)_{i,j} = \aE(\varphi_i, \varphi_j)
\]
where $\{\varphi_i\}_{i=1}^{\dim(\VnE)}$ denotes the virtual canonical basis of $\VnE$, see \eqref{definition canonical basis}.
We are adopting the usual notation, by calling $\mathbf{\vn} \in \mathbb R^{\dim(\VnE)}$ the vector of the degrees of freedom associated with $\vn \in \VnE$.

We note that we restrict our analysis on functions having zero average on $\E$, since both $\mathbf A_n^\E$ and $\mathbf A^\E$ have constant functions in their kernel;
this strategy allows to avoid the problems related to solving the generalized eigenvalue problem for singular matrices.
Moreover, the entries of matrix $\mathbf A^\E$ are not computable exactly, since virtual functions are not known explicitly;
therefore, we approximate them by solving numerically the associated diffusion problem, by means of a \emph{fine} and high-order finite element approximation.

In Table \ref{tabular generalized eigenvalue}, we present the results on three different types of polygon (namely, those which we will employ for the tests in the forthcoming Section \ref{section numerical results}):
a square, a nonconvex decagon (like any of the polygons in the outer layer of Figure \ref{figure possible meshes on Lshaped}, b), a nonconvex hexagon (like any of the polygons in the outer layer of Figure \ref{figure possible meshes on Lshaped}, c).
\begin{table}[H]
	\begin{tabular}{|c|c|c|c|c|c|c|}
		\hline
		$\p$ & sq. $\lambda_{min}$ & sq. $\lambda_{max}$ & dec. $\lambda_{min}$ & dec. $\lambda_{max}$ & hex. $\lambda_{min}$ & hex. $\lambda_{max}$ \\
		\hline

		2			& 7.8559e-01&  1.0000e+00&  7.9262e-02&  5.5516e+00&  1.6168e-01 & 1.1183e+00\\
		3 			& 4.6667e-01&  1.0000e+00&  1.0306e-01&  8.6605e+00&  1.3342e-01 & 1.4751e+00\\
		4 			& 3.3195e-01&  1.0000e+00&  4.5039e-02&  1.0852e+01&  1.0321e-01 & 1.6253e+00\\
		5 			& 2.7547e-01&  1.0000e+00&  3.4944e-02&  1.0513e+01&  7.4247e-02 & 1.8672e+00\\
		6 			& 2.1557e-01&  1.0000e+00&  2.3463e-02&  1.1835e+01&  5.5556e-02 & 1.6707e+00\\
		7 			& 1.8994e-01&  1.0000e+00&  2.0730e-02&  9.7514e+00&  3.5664e-02 & 1.9013e+00\\
		8			& 1.4136e-01&  1.0000e+00&  1.6122e-02&  1.0447e+01&  2.7559e-02 & 1.8801e+00\\
		9			& 1.2446e-01&  1.0000e+00&  1.8555e-02&  7.9781e+00&  2.1313e-02 & 1.8337e+00 \\
		10			& 9.2933e-02&  1.0000e+00&  1.3736e-02&  3.9577e+01&  1.7991e-02 & 5.6544e+00\\
		\hline
		\end{tabular}
	\caption {Minimum and maximum eigenvalues of the generalized eigenvalue problem \eqref{generalized eigenvalue problem} on: sq.= a square; dec.= a nonconvex decagon; hex.= a nonconvex hexagon.} \label{tabular generalized eigenvalue}
\end {table}
As theoretically expected, the maximum generalized eigenvalue always scales like 1. On the contrary, the minimum eigenvalue behaves in all the three cases like $\p^{-1}$.
This means that in fact the bounds of Theorem \ref{theorem stability bounds} are abundant, whereas the \emph{actual} behaviour of the stability bounds may be much milder.
Unfortunately, currently we are not able to improve the stability bounds of Theorem \ref{theorem stability bounds}.
It is worth mentioning that this has no impact on the asymptotic exponential convergence results in the next section.

\section {Exponential convergence for corner singularity on geometric meshes} \label{section exponential convergence for corner singularity on geometric meshes}
In this section, we want to show that exponential convergence is achieved if geometric mesh refinement and degree of accuracy distribution are chosen appropriately.

In order to achieve such a convergence we employ geometrically graded polygonal meshes, which will be discussed in Section \ref{subsection geometric meshes}.
Then, we show in Section \ref{subsection discontinuous estimates} estimates for the first and the second terms in the error decomposition \eqref{global VEM decomposition},
in particular proving bounds for the local right-hand side approximation and for the local best approximation by means of polynomials.
In Section \ref{subsection virtual estimates}, we obtain estimates for the third term in \eqref{global VEM decomposition},
in particular illustrating bounds for the best approximation by means of functions belonging to the virtual space defined in \eqref{global non uniform virtual space}.
Finally, in Section \ref{subsection exponential convergence}, under a proper choice for the polynomial degree vector \textbf{p} introduced in Section \ref{section VEM with non uniform degree of accuracy}
and the sequence $\{\taun\}_n$ of polygonal decompositions, we combine together the above error bounds; as a consequence, we guarantee exponential convergence for the error in the energy norm in terms of the number of degrees of freedom
of the global virtual space $\Vn$ defined in \eqref{global non uniform virtual space}.

\subsection{Geometric meshes} \label{subsection geometric meshes}
Here, we describe a class of sequences of \emph{nested} geometric meshes which we will employ later in order to show error convergence.
We recall we are assuming that the only ``singular'' corner is the origin $\mathbf 0 \in \partial \Omega$, see \eqref{highlight one singularity}.
Let $\sigma\in (0,1)$ be the grading parameter of the mesh.

The decomposition $\taun$ consists of $n+1$ layers defined as follows.
We set $L_0$ the $0$-th layer as the set of all polygons $\E$ in decomposition $\taun$ such that $\mathbf 0 \in \mathcal V ^\E_n$; next, we define by induction:
\begin{equation} \label{jth layer}
L_j = \left\{ \E_1 \in \taun \mid \overline {\E_1} \cap \overline{\E_2} \neq \emptyset, \, \text{ for some }\E_2 \in L_{j-1},\, \E_1 \nsubseteq \cup_{i=0}^{j-1} L_i \right\},\, j=1,\dots,n.
\end{equation}

We set $\mathcal T_0 = \{\Omega\}$. Given $\taun$, the decomposition $\mathcal T_{n+1}$ is obtained by refining $\taun$ \emph{only} in the layer around the singularity (i.e. $L_0$).
We require that at level $n$, the decomposition satisfies the following grading condition:
\begin{itemize}
\item[(\textbf{D3})]
\begin{equation} \label{graded like mesh property}
 h_{\E}\approx 
\begin{cases}
\displaystyle{\frac{1-\sigma}{\sigma}}\dist (\mathbf 0, \E),& \text{if } \E \notin L_0\\
\sigma^{n},& \text{otherwise}.\\
\end{cases}
\end{equation}
Furthermore, the number of elements in each layer is uniformly bounded with respect to the discretization parameters.
We will also assume that $\pE \ge 2$.
A more precise choice will be discussed in the forthcoming definition \eqref{assumption mu}.
\end{itemize}

Assumption (\textbf{D3}) justifies the name geometric for the sequence; more specifically, the closer a polygon is to \textbf{0} the smaller its diameter is.
Moreover, it is possible to check that the ratio between the size of two neighbouring layers is proportional to $\frac{1-\sigma}{\sigma}$.
As a consequence of assumption (\textbf{D3}), we also have, for $\E \in L_j$, $h_\E\approx\sigma ^{n-j}$.

\exmp \label{example graded square Schwab}
A possible sequence satisfying (\textbf{D1})-(\textbf{D3}) is the graded mesh of squares elements with hanging nodes on the $L$-shaped domain, that is used in \cite[Definition 4.30]{SchwabpandhpFEM}, see Figure \ref{figure possible meshes on Lshaped} (left).
We note that in the VEM context, this mesh contains pentagons and squares, whereas in the Finite Element counterpart the very same mesh is ``afflicted'' by the presence of squares with hanging nodes.

\exmp	\label{example graded decagons Chernov}
Another choice is depicted in Figure \ref{figure possible meshes on Lshaped} (center). 
This mesh is obtained by merging all the elements that correspond to one layer in the mesh from Example \ref{example graded square Schwab} in a single large element.
We observe that this mesh is made of $n$ decagons and one hexagon around \textbf{0}.
Moreover, we want to stress the fact that this mesh, that cannot be used in the conforming FEM enviroment, needs less then one third of the degrees of freedom of the previous one.
Finally, we point out that such a mesh does not satisfy the star-shapedness assumption (\textbf{D1}).\\

\exmp	\label{example graded decagons Chernov with cut}
As a third example, see Figure \ref{figure possible meshes on Lshaped} (right), we modify the mesh in Example \ref{example graded decagons Chernov} by adding an oblique cut on the ``central'' diagonal.
This mesh still cannot be used for conforming FEM approximations.
Obviously, it contains more elements and hence will result in more degrees of freedom than the mesh in Example \ref{example graded decagons Chernov}.
Notwithstanding, it satisfies (\textbf{D1}). Moreover, it satisfies also the technical assumption (\textbf{D4}) we will need in what follows, whereas the mesh in Example \ref{example graded decagons Chernov} does not.

\begin{figure}  [h]
\centering
\subfigure {\includegraphics [angle=0, width=0.31\textwidth]{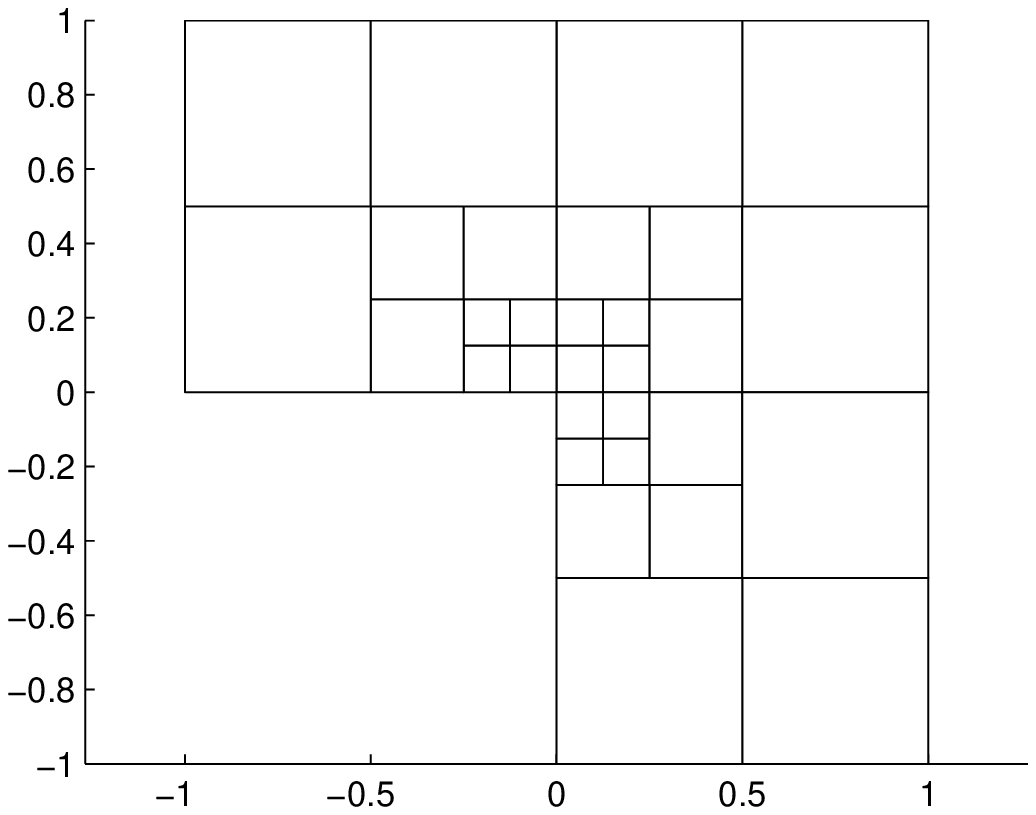}}
\subfigure {\includegraphics [angle=0, width=0.31\textwidth]{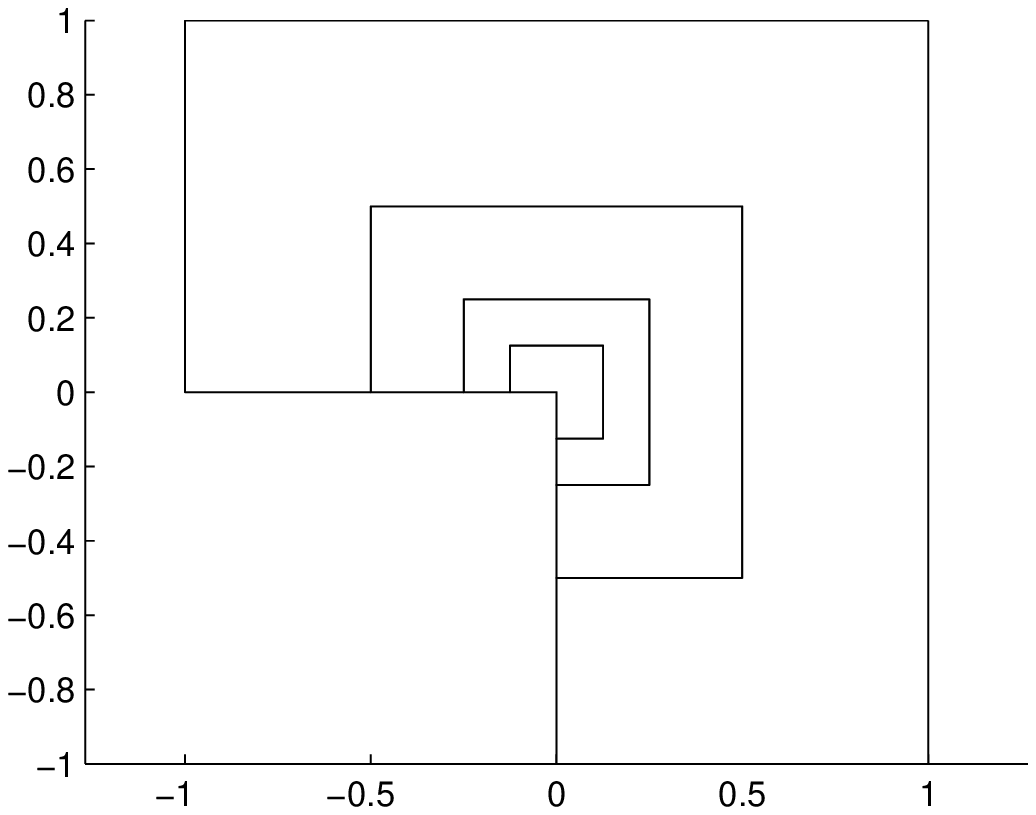}}
\subfigure {\includegraphics [angle=0, width=0.31\textwidth]{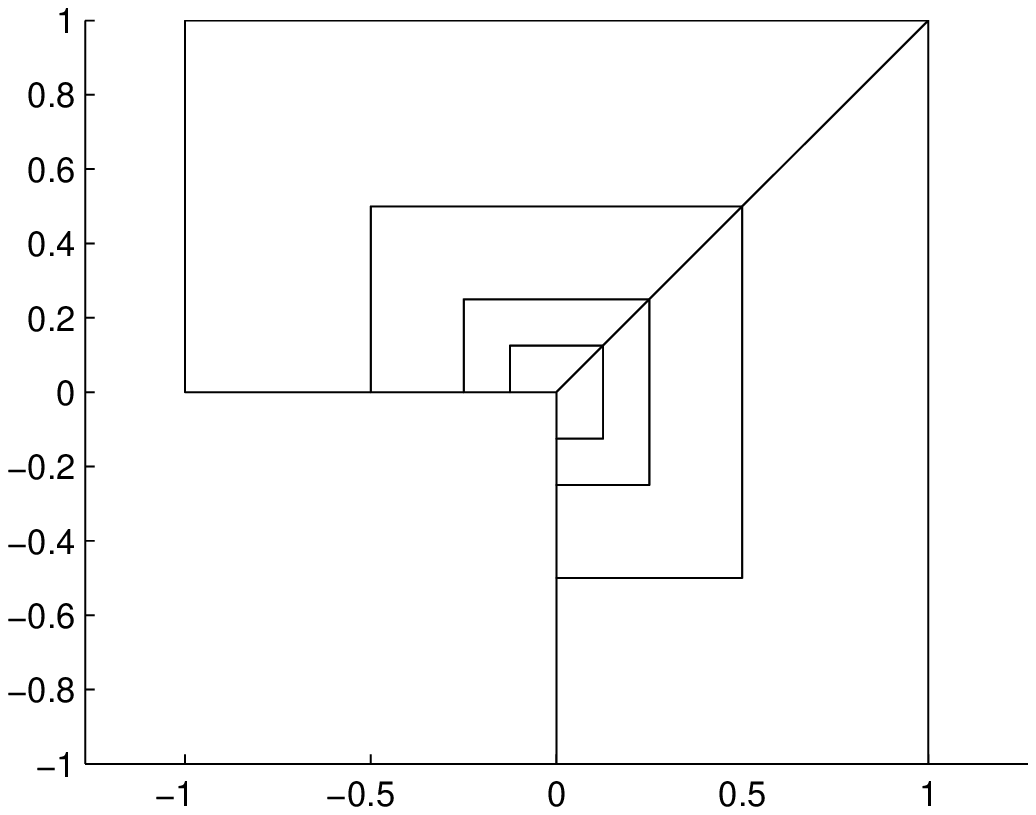}}
\caption{Decomposition $\taun$, $n=3$, for Example \ref{example graded square Schwab} (left), Example \ref{example graded decagons Chernov} (center), Example \ref{example graded decagons Chernov with cut}(right).}
\label{figure possible meshes on Lshaped}
\end{figure}

\medskip
\medskip
We require the following additional assumption on the geometry of the decomposition. We will need it to state approximation results in Sections \ref{subsection discontinuous estimates} and \ref{subsection virtual estimates}.

\begin{itemize}
\item [(\textbf{D4})]
Let $\taun$ be a geometric polygonal decomposition; write $\taun = \taun ^0 \cup \taun^1$, where $\taun^0 = L_0$ and $\taun^1=\cup_{j=1}^n L_j$.
Then, there exists a collection $\Cn^1$ of squares such that:
\begin{itemize}
\item $\card(\Cn^1)=\card(\taun^1)$; for each $\E\in \taun^1$, there exists $\Q=\Q(\E)\in \Cn^1$ such that $\Q\supseteq \E$ and $h_{\E}\approx h_{\Q}$; in addition, it must hold dist($\mathbf 0,Q(\E$))$\approx h_{\E}$;
\item every $\mathbf x \in \Omega$ belong at most to a fixed number of squares $\Q$, independently on all the discretization parameters;
\item $\forall \E \in \taun^0$, $\E$ is star-shaped with respect to $\mathbf 0$; moreover, the subtriangulation of $\E$ obtained by joining $\mathbf 0$ with the other vertices is uniformly shape regular
($\gamma$ being the shape-regularity constant).
\end{itemize}

We set $\Omegaext_{n} = (\cup_{\Q\in \Cn^1} \Q) \cup (\cup_{\E \in \taun^0} \E)$.
\end{itemize}

We point out that (\textbf{D4}) seems to be a rather technical requirement.
Indeed, we will show in Section \ref{section numerical results} that also meshes not satisfying (\textbf{D4}) may produce the expected convergence behaviour shown in Theorem \ref{theorem exponential convergence}.

We note that (\textbf{D4}) is in the spirit of the strategy of the overlapping square technique used in \cite{hpVEMbasic, cangianigeorgoulishouston_hpDGFEM_polygon}.
We here additionally require that squares covering polygons far from the singularity cannot cover also such a singularity (since in this case $\p$ approximation results would not hold, thus invalidating Theorem \ref{theorem exponential convergence}).
We also stress that the decomposition in Example \ref{example graded decagons Chernov} does not satisfy neither (\textbf{D1}) nor (\textbf{D4}).
Finally, we point out that instead of considering a decomposition of squares $\Cn$, it is possible to consider in (\textbf{D4}) a decomposition in sufficiently regular quadrilaterals (e.g. parallelograms),
since the same analysis by means of Legendre polynomials that follows (for instance in Lemmata \ref{lem local discontinuous approximation far from origin} and \ref{lem local discontinuous approximation far from origin loading term}) could be performed.

\subsection{Local approximation by polynomials} \label{subsection discontinuous estimates}
Here, we deal with the approximation of the first and the second term in the right-hand side \eqref{global VEM decomposition}.
What we are going to prove are $hp$ approximation properties by means of local polynomials on polygons.
In $hp$-FEM literature, classical approximation of this type is not effectuated on general polygons but only on squares and triangles,
see \cite{SchwabpandhpFEM, lishen_optimalerrorestimateJacobi, guowang_JacobiapproximationSobolev, babuskasurihpversionFEMwithquasiuniformmesh, BabuSuri_optimalconvergence} and the references therein.\\
The basic tool behind this approach is the employment of orthogonal bases, namely tensor product of Legendre polynomials on the square, see \cite{SchwabpandhpFEM},
and Koornwinder polynomials (that is collapsed tensor product of Jacobi polynomials) on triangles, see \cite{dubinerspectraltriangles, koornwinderclassicalorthogonalpolynomials};
with such basis, explicit computations can be performed, owing to properties of Legendre and Jacobi polynomials.
On a generic polygon an explicit basis with good approximation properties is not available.
%

The error analysis follows the lines of \cite{SchwabpandhpFEM, hpVEMbasic} and is summarized below.
Let $\mathbf p$ be the vector of the local degree of accuracy on each polygon.
We recall that we denote with $S^{\mathbf p,-1}(\taun, \Omega)$ the space of piecewise discontinuous polynomials over the decomposition $\taun$ of degree $\pE$ on each polygon $\pE$.

The first result is a polynomial approximation estimate regarding regular functions on polygons far from the singularity.
This result will be used for the approximation of the local second term in \eqref{global VEM decomposition} for the elements $\E$ separated from the singularity.
\begin{lem} \label{lem local discontinuous approximation far from origin}
Under assumptions (\textbf{D1})-(\textbf{D4}), let $\E\in L_j$, $j = 1, \dots, n$. Let $\Q(\E)$ be defined in (\textbf{D4}) and let
$\u \in H_{\beta}^{s_\E+3,2}(\Q(\E))$, $1\le s_{\E}\le p_{\E}$.
Then, there exists $\Phi\in \mathbb P_{p_{\E}}(\Q(\E))$ such that:
\begin{equation} \label{error approximation 4 discontinuous far from origin}
\Vert  D^m(u-\Phi) \Vert ^2_{0,\E} \lesssim \sigma ^{2(n-j)(2-m-\beta)} \frac{\Gamma (p_{\E} - s_{\E} + 1)}{\Gamma (p_{\E} + s_{\E} + 3 - 2m)} \left( \frac{\rho}{2} \right)^{2s_{E}} \vert u  \vert^2_{H_{\beta}^{s_\E+3,2}(\Q(\E))}
\end{equation}
where $m=0,1,2$;
$2\le j\le n+1$; $\rho=\max(1,\frac{1-\sigma}{\sigma})$, $\sigma$ is the grading parameter of the mesh and $\Gamma$ is the Gamma function.
\end{lem}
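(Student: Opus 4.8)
The plan is to transport the whole question to the reference square via a scaling argument, then quote the standard tensor-product Jacobi/Legendre approximation theory on the square in weighted Sobolev norms. First I would use assumption (\textbf{D4}) to fix the enclosing square $\Q=\Q(\E)$ with $\E\subseteq\Q$, $h_\E\approx h_\Q\approx\dist(\mathbf 0,\Q)$, and for $\E\in L_j$ the grading condition (\textbf{D3}) together with the consequence $h_\E\approx\sigma^{\,n-j}$ recorded after (\textbf{D3}). The key point that makes the weighted norm behave well is that $\Q$ is \emph{separated} from the singularity: on $\Q$ the weight $\Phi_\beta$ satisfies $\Phi_\beta(\mathbf x)\approx\dist(\mathbf 0,\Q)^\beta\approx\sigma^{(n-j)\beta}$, so up to constants the weighted seminorm $|u|_{H^{s_\E+3,2}_\beta(\Q)}$ is comparable to an unweighted Sobolev seminorm scaled by that power of $\sigma$ (plus the lower-order terms, which are likewise controlled). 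Thus the problem reduces, after an affine map $F:\widehat\Q=(-1,1)^2\to\Q$ of size $h_\Q\approx\sigma^{\,n-j}$, to: approximate $\widehat u=u\circ F\in H^{s_\E+3}(\widehat\Q)$ by a polynomial $\widehat\Phi\in\mathbb P_{p_\E}(\widehat\Q)$.

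On $\widehat\Q$ I would invoke the classical tensor-product projection-based interpolation / truncated Legendre (more precisely Jacobi) expansion estimate — this is exactly the kind of bound in \cite{SchwabpandhpFEM} (e.g.\ the results leading to (4.5.x) there) and reused in \cite{hpVEMbasic}. That estimate gives, for $m=0,1,2$ and $1\le s_\E\le p_\E$,
\[
\Vert D^m(\widehat u-\widehat\Phi)\Vert_{0,\widehat\Q}^2
\lesssim \frac{\Gamma(p_\E-s_\E+1)}{\Gamma(p_\E+s_\E+3-2m)}\,\Vert D^{s_\E+3}\widehat u\Vert_{0,\widehat\Q}^2,
\]
where the $\Gamma$-ratio is the sharp $hp$ factor (it is what produces exponential convergence in $p$ once $s_\E$ is allowed to grow). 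Mapping back, each derivative $D^m$ and each volume element contributes a power of $h_\Q\approx\sigma^{\,n-j}$: $\Vert D^m(u-\Phi)\Vert_{0,\E}^2\lesssim h_\Q^{2(1-m)}\Vert D^m(\widehat u-\widehat\Phi)\Vert_{0,\widehat\Q}^2$ and $\Vert D^{s_\E+3}\widehat u\Vert_{0,\widehat\Q}^2\lesssim h_\Q^{2(s_\E+3)-2}\Vert D^{s_\E+3}u\Vert_{0,\Q}^2$; and the $\rho=\max(1,(1-\sigma)/\sigma)$ factor absorbs the $(s_\E+3)$-th derivative scaling together with the countably-normed structure in the definition \eqref{weighted seminorm}, producing the $(\rho/2)^{2s_\E}$ term. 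Combining the $h_\Q$ powers with the weight equivalence $\Phi_\beta\approx\sigma^{(n-j)\beta}$ gives the exponent $2(n-j)(2-m-\beta)$ claimed in \eqref{error approximation 4 discontinuous far from origin}.

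The main obstacle is bookkeeping the three independent powers correctly — the $h_\Q$-scaling of $D^m$, the $h_\Q$-scaling of the $(s_\E+3)$-rd derivative seminorm, and the weight power $\sigma^{(n-j)\beta}$ — and checking that they assemble exactly to $\sigma^{2(n-j)(2-m-\beta)}$; a secondary subtlety is that the right-hand side of the cited square estimate is naturally the full $H^{s_\E+3}$ seminorm whereas the claim uses the \emph{weighted} seminorm $|u|_{H^{s_\E+3,2}_\beta(\Q)}$, so one must verify that on the separated square $\Q$ these are equivalent up to the extracted power of $\sigma$ (this uses precisely that $\dist(\mathbf 0,\Q)\approx h_\Q$, i.e.\ the non-trivial half of (\textbf{D4}), so the weights never degenerate on $\Q$). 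The factor $(\rho/2)^{2s_\E}$ and the exact form of the $\Gamma$-ratio are then inherited verbatim from the reference-square estimate; I would cite \cite{SchwabpandhpFEM} and \cite{hpVEMbasic} for that ingredient rather than re-derive it.
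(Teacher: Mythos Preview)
Your proposal is correct and follows essentially the same route as the paper: encapsulate $\E$ in the square $\Q(\E)$ from (\textbf{D4}), map to the reference square $\widehat Q=[-1,1]^2$, invoke the tensor-product Legendre approximation result on $\widehat Q$ (the paper cites \cite[Theorem 4.46]{SchwabpandhpFEM}), and push forward using the grading property (\textbf{D3}) together with \cite[Lemmata 4.50 and 4.53]{SchwabpandhpFEM} to recover the $\sigma$ and $\rho$ factors. The paper's own proof is only a sketch referring to these ingredients, so your more detailed bookkeeping of the scaling powers and the weight equivalence on the separated square is a faithful expansion of it.
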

\begin{proof}
The result follows from classical scaling arguments and \cite[Lemma 4.53]{SchwabpandhpFEM}. Here, we only give the idea of the proof.
Firstly one encapsulates polygon $\E$ into the corrensponding square $\Q(\E)$.
It is possible to bound the left hand side of inequality \eqref{error approximation 4 discontinuous far from origin} with the same (semi)norm on the square.
After that, the square is mapped into the reference square $\widehat Q = [-1,1]^2$ and a $\p$ analysis by means of tensor product of Legendre polynomials is developed (see \cite[Theorem 4.46]{SchwabpandhpFEM}).
Subsequently, the reference square is pushed forward to square $\Q$. Using the property of the geometric mesh stated in assumption (\textbf{D3}) and \cite[Lemma 4.50]{SchwabpandhpFEM}, the result follows.
\end{proof}

Estimate on polygons around the singularity are discussed in the following lemma.
We point out that for the error control in layer $L_0$ we can work directly on the element without the need of employing covering squares, as effectuated for the analysis on the polygons of the other layers,
see Lemma \ref{lem local discontinuous approximation far from origin}.
The proof is an extension to polygonal domains of that in Theorem \cite[Lemma 4.16]{SchwabpandhpFEM}.
\begin{lem} \label{lem local discontinuous approximation near origin}
Under assumptions (\textbf{D1})-(\textbf{D4}), let $\E \in L_0$. Let $\u \in H_{\beta}^{2,2}(\E)$, $\beta\in [0,1)$.
Then, there exists $\Phi\in \mathbb P_{1}(\E)$ such that:
\begin{equation} \label{error approximation 4 discontinuous near origin}
\vert u-\Phi \vert ^2_{1,\E} \lesssim h_{\E} ^{2(1-\beta)} \Vert \vert \mathbf x \vert ^{\beta} |D^2 u| \Vert ^2_{0, \E} \lesssim  \sigma ^{2(1-\beta)n} \Vert \vert \mathbf x \vert ^{\beta} |D^2 u| \Vert ^2_{0, \E},
\end{equation}
where $\sigma$ is the grading factor from assumption (\textbf{D3}).
\end{lem}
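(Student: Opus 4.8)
The plan is to reduce the estimate, by a dilation, to a reference-size configuration, where it becomes a scale-free weighted Poincar\'e inequality on a domain whose only point of weight degeneracy is the origin; this is the polygonal counterpart of \cite[Lemma 4.16]{SchwabpandhpFEM}.

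First I would set $\widehat{\E}=h_\E^{-1}\E$ and $\widehat{u}(\widehat{\mathbf x})=u(h_\E\widehat{\mathbf x})$. Since the $H^1$-seminorm is invariant under planar dilations and a change of variables gives $\| |\mathbf x|^\beta|D^2u|\|_{0,\E}^2=h_\E^{-2(1-\beta)}\| |\widehat{\mathbf x}|^\beta|D^2\widehat{u}|\|_{0,\widehat{\E}}^2$, the first inequality in \eqref{error approximation 4 discontinuous near origin} is equivalent to the scale-free bound $|\widehat{u}-\widehat\Phi|_{1,\widehat{\E}}^2\lesssim\| |\widehat{\mathbf x}|^\beta|D^2\widehat{u}|\|_{0,\widehat{\E}}^2$ for some $\widehat\Phi\in\mathbb P_1(\widehat{\E})$, where $\widehat{\E}$ has diameter $\approx1$ and, by assumption (\textbf{D4}), is star-shaped with respect to $\mathbf 0$ with a $\gamma$-uniformly shape-regular subtriangulation rooted at $\mathbf 0$. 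The second inequality would then follow from $h_\E\approx\sigma^n$ for $\E\in L_0$ (assumption (\textbf{D3})), since $2(1-\beta)>0$.

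To prove the scale-free bound I would use that $\widehat{u}\in H^{2,2}_\beta(\widehat{\E})$ implies $\widehat{u}\in H^1(\widehat{\E})$ by \eqref{weighted Sobolev norm}, so $\mathbf w:=\nabla\widehat{u}\in[L^2(\widehat{\E})]^2$; taking $\widehat\Phi(\widehat{\mathbf x})=\mathbf c\cdot\widehat{\mathbf x}$ with a constant vector $\mathbf c$ gives $|\widehat{u}-\widehat\Phi|_{1,\widehat{\E}}=\|\mathbf w-\mathbf c\|_{0,\widehat{\E}}$, and it then remains to establish the weighted Poincar\'e inequality $\|\mathbf w-\mathbf c\|_{0,\widehat{\E}}\lesssim\| |\widehat{\mathbf x}|^\beta|\nabla\mathbf w|\|_{0,\widehat{\E}}$ with $\mathbf c=\frac{1}{|A|}\int_A\mathbf w$ and $|\nabla\mathbf w|=|D^2\widehat{u}|$, where $A$ is an annular subregion of $\widehat{\E}$. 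I would prove this by passing to polar coordinates, in which (\textbf{D4}) gives $\widehat{\E}=\{(r,\theta):0<r<\rho(\theta),\ \theta\in I\}$ with $I$ a bounded interval and $\rho(\theta)\approx1$, and splitting $\widehat{\E}=A\cup B$ with $A=\widehat{\E}\cap\{|\mathbf x|>\rho_0\}$ and $B=\{(r,\theta):0<r<\rho_0,\ \theta\in I\}$ for a fixed $\rho_0<\inf_I\rho$. On $A$ the weight is bounded below and $A$ is Lipschitz with $\gamma$-controlled constants, so the classical Poincar\'e and trace inequalities give $\|\mathbf w-\mathbf c\|_{0,A}+\|\mathbf w-\mathbf c\|_{0,\{|\mathbf x|=\rho_0\}}\lesssim\|\nabla\mathbf w\|_{0,A}\lesssim\| |\widehat{\mathbf x}|^\beta|\nabla\mathbf w|\|_{0,A}$; on $B$, for a.e.\ $(r,\theta)$ the radial fundamental theorem of calculus yields $\mathbf w(r,\theta)-\mathbf c=(\mathbf w(\rho_0,\theta)-\mathbf c)-\int_r^{\rho_0}\partial_s\mathbf w(s,\theta)\,ds$, and the Cauchy-Schwarz inequality with weight $s^{2\beta+1}$ together with
\[
\int_0^{\rho_0}\Big(\int_r^{\rho_0}s^{-2\beta-1}\,ds\Big)r\,dr<\infty ,
\]
which holds precisely because $\beta<1$, bounds $\int_B|\mathbf w-\mathbf c|^2\,dA$ by $\|\mathbf w-\mathbf c\|_{0,\{|\mathbf x|=\rho_0\}}^2+\int_B|\partial_r\mathbf w|^2|\mathbf x|^{2\beta}\,dA$, hence by $\| |\widehat{\mathbf x}|^\beta|\nabla\mathbf w|\|_{0,\widehat{\E}}^2$. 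Adding the two contributions and undoing the dilation of the first step finishes the proof.

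The hard part will be the correct treatment of the single degeneracy of the weight at $\mathbf 0$: the radial Hardy-type estimate on $B$ works only thanks to $\beta<1$, and one has to justify the above manipulations for $\mathbf w=\nabla\widehat{u}$, which lies in $H^1$ only away from $\mathbf 0$ (the sole point where regularity may be lost) and there only with a weighted quantitative bound. An alternative, closer to the formulation of \cite[Lemma 4.16]{SchwabpandhpFEM}, would be to apply the triangle version of the weighted polynomial approximation on each triangle of the subtriangulation --- after an affine map fixing $\mathbf 0$, under which $|\mathbf x|^\beta$ is preserved up to constants --- and then reconcile the resulting local linear polynomials into a single $\widehat\Phi$ by a chaining argument over the boundedly many shape-regular triangles; in that case the reconciliation becomes the main technical point, and the weighted Poincar\'e estimate above is precisely what makes it go through.
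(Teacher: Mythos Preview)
Your argument is correct, but it takes a different route from the paper. The paper works as follows: it first extends the Hardy inequality $\int_\E |\mathbf x|^{\alpha-2}|u-u(\mathbf 0)|^2\lesssim\int_\E|\mathbf x|^\alpha|D^1u|^2$ from triangles to the polygon $\E$ by summing over the subtriangulation rooted at $\mathbf 0$ (assumption (\textbf{D4})); this yields the compact embedding $H^{2,2}_\beta(\E)\hookrightarrow H^1(\E)$, and then a Peetre--Bramble--Hilbert compactness argument (as in \cite[Lemma 4.16]{SchwabpandhpFEM}) gives
\[
|U|_{1,\E}^2\lesssim h_\E^{2(1-\beta)}\||\mathbf x|^\beta D^2U\|_{0,\E}^2+\sum_{i=1}^3|U(\mathbf A_i)|^2
\]
for any three non-aligned vertices $\mathbf A_i$; taking $\Phi$ to be the linear interpolant of $u$ at the $\mathbf A_i$ kills the point values.

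Your approach instead rescales to unit size and proves the weighted Poincar\'e inequality for $\mathbf w=\nabla\widehat u$ \emph{directly} in polar coordinates, splitting into an outer annulus (classical Poincar\'e plus trace) and an inner sector (radial fundamental theorem of calculus plus the Hardy-type integrability $\int_0^{\rho_0}\big(\int_r^{\rho_0}s^{-2\beta-1}ds\big)r\,dr<\infty$ for $\beta<1$). This is genuinely more elementary: it avoids compactness altogether and in principle gives trackable constants, at the price of the density/regularity justification you flag. The paper's compactness route is slicker to state and plugs straight into the Schwab framework, but it is non-constructive and hides the role of $\beta<1$ inside the embedding. Your ``alternative'' paragraph (triangle-by-triangle plus chaining) is essentially a constructive variant of what the paper does via compactness; either path is fine.
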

\begin{proof}
We start by proving the following Hardy inequality on polygons with a vertex at $\mathbf 0$.
Let $\alpha >0$, let be given a function $u$ such that $\int_\E \vert \mathbf x \vert ^\alpha \vert D^1 u \vert ^2 < +\infty $ and $u \in \mathcal C^0(\overline \E)$. Then:
\begin{equation} \label{Hardy inequality}
\int _\E \vert \mathbf x \vert ^{\alpha-2} \vert u - u(\mathbf 0) \vert ^2 \le c \int_\E \vert \mathbf x \vert ^\alpha \vert D^1 u \vert ^2.
\end{equation}
We consider the regular subtriangulation by joining $\mathbf 0$ with the other vertices of $\E$; the existence of such a decomposition is guaranteed by assumption (\textbf{D4}).
Thanks to \cite[Lemma 4.18]{SchwabpandhpFEM}, the ``triangular'' counterpart of \eqref{Hardy inequality} holds:
\begin{equation} \label{local Hardy inequality}
\int _\T \vert \mathbf x \vert ^{\alpha-2} \vert u - u(\mathbf 0) \vert ^2 \le c \int_\T \vert \mathbf x \vert ^\alpha \vert D^1 u \vert ^2,\quad \forall\, \T \text{ in the subtriangulation of $\E$}.
\end{equation}
It suffices then to split the integral over $\E$ into a sum of integrals over the triangles of the subtriangulation, apply \eqref{local Hardy inequality} and collect all the terms.

Using \eqref{Hardy inequality} and applying the argument of \cite[Lemma 4.19]{SchwabpandhpFEM} to the polygon $\E$, we observe that $H^{2,2}_\beta(\E)$ is compactly embedded in $H^1(\E)$.
Using such a compact embedding and proceeding as in \cite[Lemma 4.16]{SchwabpandhpFEM}, the following inequality holds true for a polygon $\E$ star-shaped with respect to $\mathbf 0$:
\begin{equation} \label{inequality following compact embedding}
\vert U \vert _{1,\E}^2 \lesssim h_\E^{2(1-\beta)} \Vert |\mathbf x|^{\beta} D^2 U \Vert^2_{0,\E} + \sum_{i=1}^3 \vert U(\mathbf A_i) \vert^2,\quad \forall \, U  \in H^{2,2}_\beta (\E),
\end{equation}
where $\{ \mathbf A_i \}_{i=1}^3$ is a set of three arbitrary nonaligned vertices of $\E$.\\
Let $\Phi$ be the linear interpolant of $\u$ at $\mathbf A_i$, $i=1,\dots, 3$. Then, plugging $U = \u - \Phi$ in \eqref{inequality following compact embedding},
noting that $U(A_i)=0$, $i=1,2,3$, and using the geometric assumption (\textbf{D3}), we get the claim.

\end{proof}

We note that \eqref{error approximation 4 discontinuous near origin} does not rely on $\p$ approximation results, but only on scaling argument. 
This will be enough in order to prove the main result of this work, that is Theorem \ref{theorem exponential convergence},
and it is in accordance with the choice of the vector of local degrees of accuracy that will be effectuated in the forthcoming definition \eqref{assumption mu}.
We emphasize that this is in the spirit of classical $hp$ refinement, see \cite{SchwabpandhpFEM}.


We now turn our attention to the approximation of the first local term in \eqref{global VEM decomposition}, i.e. to the local approximation of the loading term.
Since we are approximating it with piecewise polynomials of local degree $\pE-2$, we set $\overline {\mathbf p} = \mathbf p-2$, i.e. $\forall \E \in \taun$, $\overline p_{\E}= p_{\E}-2$.
We have, for all $\vn \in \Vn$:
\begin{equation} \label{loading decomposition}
( \fn, \vn )_{0,\E} -(f,\vn)_{0,\E}  = \sum _{\E\in \taun} \int _{\E} (\Pi ^0 _{p-2, \E}f-f)(\vn - \Pi^0_{0,\E}\vn) =: \sum_{\E\in \taun}  F_{\E}(\vn),
\end{equation}
where we recall we are assuming for the sake of simplicity $\pE \ge 2$, $\forall \E \in \taun$, see Section \ref{section VEM with non uniform degree of accuracy}.\\

As above, we develop a different analysis for polygons near and far from the singularity. We start with the ``far'' case.
\begin{lem} \label{lem local discontinuous approximation far from origin loading term}
Under assumptions (\textbf{D1})-(\textbf{D4}), let $\E\in L_j$, $j = 1,\dots, n$. Let $\Q(\E)$ be defined in (\textbf{D4}). Let $f \in H_{\beta}^{\overline s_{\E}+3,2}(\Q(\E))$, $0\le \overline s_{\E}\le \overline \p_\E$, with $\overline \p_\E = \pE-2$.
Then, for all $\vn \in \VnE$,
\[
F_{\E}(\vn)	 \le \vert \vn \vert _{1,\E} \left\{ \sigma^{(n-j)(2-\beta)}  \left(\frac{\Gamma(\overline p_{\E}- \overline s_{\E} + 1)}{\Gamma (\overline p_{\E} + \overline s_{\E} +1)}\right)^{\frac{1}{2}} 
\left( \frac{\rho}{2}\right)^{\overline s_{\E}}    \vert f \vert _{H_{\beta}^{\overline s_{\E}+3,2}(\Q(\E))}      \right\}
\]
with the same notation of Lemma \ref{lem local discontinuous approximation far from origin}.
\end{lem}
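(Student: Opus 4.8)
The plan is to split $F_\E(\vn)$ by the Cauchy--Schwarz inequality into the product of a best-polynomial-approximation error for the load $f$ and the $L^2$-oscillation of the virtual function $\vn$, and then to estimate the two factors separately. Since $\overline\p_\E=\pE-2\ge0$, the function $\Pizpmd f-f$ is $L^2(\E)$-orthogonal to constants, so, writing $F_\E$ as in \eqref{loading decomposition}, one has $F_\E(\vn)=\int_\E(\Pizpmd f-f)(\vn-\Pi^0_{0,\E}\vn)$ and hence
\[
F_\E(\vn)\le\Vert\Pizpmd f-f\Vert_{0,\E}\,\Vert\vn-\Pi^0_{0,\E}\vn\Vert_{0,\E}.
\]

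For the first factor I would use minimality of the $L^2(\E)$-projection, $\Vert\Pizpmd f-f\Vert_{0,\E}\le\Vert f-\Phi\Vert_{0,\E}$ for every $\Phi\in\mathbb P_{\overline\p_\E}(\Q(\E))$, and then invoke Lemma \ref{lem local discontinuous approximation far from origin} with $f$ in place of $\u$, $m=0$, and $(\pE,s_\E)$ replaced by $(\overline\p_\E,\overline s_\E)$ — legitimate when $\overline s_\E\ge1$ — to obtain
\[
\Vert f-\Phi\Vert_{0,\E}^2\lesssim\sigma^{2(n-j)(2-\beta)}\,\frac{\Gamma(\overline\p_\E-\overline s_\E+1)}{\Gamma(\overline\p_\E+\overline s_\E+3)}\Big(\frac{\rho}{2}\Big)^{2\overline s_\E}\vert f\vert^2_{H^{\overline s_\E+3,2}_\beta(\Q(\E))}.
\]
When $\overline s_\E=0$ the same bound follows from an elementary scaling argument on $\Q(\E)$: because $\Q(\E)$ lies at distance $\approx h_{\Q(\E)}\approx\sigma^{n-j}$ from $\mathbf 0$, the weight $\Phi_\beta$ is comparable to a fixed power of $\sigma^{n-j}$ there, and a standard (unweighted, $\p$-independent) polynomial approximation estimate on the square reproduces the displayed inequality with the $\Gamma$-ratio equal to $1$.

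For the second factor, $\Pi^0_{0,\E}\vn$ is the mean of $\vn$ over $\E$, and since $\E$ is star-shaped with respect to a ball of radius $\ge\gamma h_\E$ by $(\textbf{D1})$, the Poincaré inequality gives $\Vert\vn-\Pi^0_{0,\E}\vn\Vert_{0,\E}\lesssim h_\E\,\vert\vn\vert_{1,\E}$; by $(\textbf{D3})$, i.e. $h_\E\approx\sigma^{n-j}$ for $\E\in L_j$, this is $\lesssim\sigma^{n-j}\,\vert\vn\vert_{1,\E}$. Multiplying the two factors and taking the square root yields the prefactor $\sigma^{(n-j)(3-\beta)}$, which is bounded by $\sigma^{(n-j)(2-\beta)}$ since $\sigma\in(0,1)$ and $j\le n$; likewise $\Gamma(\overline\p_\E+\overline s_\E+3)\ge\Gamma(\overline\p_\E+\overline s_\E+1)$, so the Gamma ratio above is bounded by the one in the statement, and the claim follows.

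I do not expect any serious difficulty: the proof is essentially a Cauchy--Schwarz estimate combined with Lemma \ref{lem local discontinuous approximation far from origin} and a Poincaré inequality. The only points needing attention are the bookkeeping of the $\sigma$-powers and of the $\Gamma$-factors — Lemma \ref{lem local discontinuous approximation far from origin} actually supplies slightly more decay than is claimed and one simply discards the surplus — the degenerate value $\overline s_\E=0$ (inside which the sub-case $\overline\p_\E=0$ calls for a degree-one rather than a constant approximant), and the use, already implicit in Lemma \ref{lem local discontinuous approximation far from origin}, of a Stein-type extension of $f$ beyond $\partial\Omega$ whenever the covering square $\Q(\E)$ protrudes from $\Omega$.
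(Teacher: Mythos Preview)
Your proposal is correct and follows essentially the same route as the paper's proof, which is a one-line sketch: Cauchy--Schwarz on \eqref{loading decomposition}, a standard Poincar\'e bound for $\vn-\Pi^0_{0,\E}\vn$, and the $L^2$ polynomial approximation estimate for $f$ analogous to Lemma~\ref{lem local discontinuous approximation far from origin}. Your additional remarks on discarding the surplus $\sigma$- and $\Gamma$-decay, on the degenerate case $\overline s_\E=0$, and on the extension of $f$ to $\Q(\E)$ are all appropriate and consistent with the paper's treatment.
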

\begin{proof}
It suffices to use a Cauchy-Schwarz inequality in \eqref{loading decomposition}, standard bounds for the projection errors
and analogous estimate to those in Lemma \ref{lem local discontinuous approximation far from origin}.
\end{proof}
Assume now that $\E$ is an element in the finest level $L_0$. We work here a bit differently from what we did in Lemma \ref{lem local discontinuous approximation near origin}. In particular we get the following.
\begin{lem} \label{lem local discontinuous approximation near origin loading term}
Under assumptions (\textbf{D1})-(\textbf{D3}), let $\E\in L_0$. Assume $f \in L^2(\E)$. Let $\beta \in [0,1)$. Then:
\[
F_{\E}(\vn) \le h_{\E}^{1-\beta} \vert \vn \vert _{1,\E} \Vert f \Vert _{0, \E} \lesssim \sigma^{n(1-\beta)} \vert \vn \vert_{1,\E},\quad \forall \vn \in \VnE,
\]
where $\sigma$ is the grading factor from assumption (\textbf{D3}).
\end{lem}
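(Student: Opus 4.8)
The plan is to apply a Cauchy--Schwarz inequality to the integral defining $F_\E$ and then to estimate the two resulting factors by elementary means, without invoking any $\p$-approximation result. Recall from \eqref{loading decomposition} that
\[
F_\E(\vn) = \int_\E \big(\Pizpmd f - f\big)\big(\vn - \Pi^0_{0,\E}\vn\big),
\]
so that
\[
|F_\E(\vn)| \le \Vert \Pizpmd f - f \Vert_{0,\E}\,\Vert \vn - \Pi^0_{0,\E}\vn \Vert_{0,\E}.
\]

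For the first factor I would use only that $\Pizpmd$ is the $L^2(\E)$-orthogonal projection onto $\mathbb P_{\pE-2}(\E)$; since we assume $\pE\ge 2$, this space contains the zero polynomial, hence $\Pizpmd$ is an $L^2$-contraction and $\Vert \Pizpmd f - f \Vert_{0,\E}\le\Vert f\Vert_{0,\E}$. For the second factor, note that $\Pi^0_{0,\E}\vn$ is precisely the mean value of $\vn$ over $\E$, so that $\vn-\Pi^0_{0,\E}\vn$ has zero average; since $\E$ is star-shaped with respect to a ball of radius comparable to $h_\E$ by assumption (\textbf{D1}) (for $\E\in L_0$ one may equivalently use the star-shapedness with respect to $\mathbf 0$ granted by (\textbf{D4})), the Poincar\'e--Wirtinger inequality yields $\Vert \vn - \Pi^0_{0,\E}\vn\Vert_{0,\E}\lesssim h_\E\,\vert\vn\vert_{1,\E}$, with a constant depending only on the shape-regularity parameter $\gamma$. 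Finally, since $h_\E\le\diam(\Omega)$ and $\beta\in[0,1)$ we have $h_\E=h_\E^{\beta}h_\E^{1-\beta}\le\diam(\Omega)^{\beta}h_\E^{1-\beta}\lesssim h_\E^{1-\beta}$, whence $\Vert \vn - \Pi^0_{0,\E}\vn\Vert_{0,\E}\lesssim h_\E^{1-\beta}\vert\vn\vert_{1,\E}$.

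Combining the two estimates gives $|F_\E(\vn)|\lesssim h_\E^{1-\beta}\Vert f\Vert_{0,\E}\vert\vn\vert_{1,\E}$, which is the first claimed inequality. The second one is then immediate: $\Vert f\Vert_{0,\E}\le\Vert f\Vert_{0,\Omega}$ is a fixed quantity independent of all discretization parameters, and the geometric grading property (\textbf{D3}) gives $h_\E\approx\sigma^{n}$ for every $\E\in L_0$, so that $h_\E^{1-\beta}\Vert f\Vert_{0,\E}\lesssim\sigma^{n(1-\beta)}$. I stress that this argument uses nothing about the virtual structure of $\VnE$ and no spectral approximation estimate, in accordance with the classical $hp$ philosophy for the layer closest to the singularity: there the error is driven purely by the mesh size. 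The only points requiring care are that the Poincar\'e--Wirtinger constant and the passage from $h_\E$ to $h_\E^{1-\beta}$ be uniform over the sequence of decompositions, which is guaranteed respectively by (\textbf{D1}) and by $\beta<1$ together with the boundedness of $\diam(\Omega)$; there is otherwise no genuine obstacle.
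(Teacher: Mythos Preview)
Your proof is correct and follows essentially the same route as the paper: Cauchy--Schwarz on the integral defining $F_\E$, then the Poincar\'e--Wirtinger (equivalently, Bramble--Hilbert) bound $\Vert \vn-\Pi^0_{0,\E}\vn\Vert_{0,\E}\lesssim h_\E\vert\vn\vert_{1,\E}$, followed by $h_\E\lesssim h_\E^{1-\beta}$ and the grading property $h_\E\approx\sigma^n$. The only cosmetic difference is that the paper invokes Bramble--Hilbert by name rather than spelling out the mean-value Poincar\'e argument; your mention of (\textbf{D4}) is unnecessary since (\textbf{D1}) already suffices, consistent with the lemma's hypotheses.
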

\begin{proof}
Using a Cauchy-Schwarz inequality and Bramble Hilbert lemma (see \cite{BrennerScott}), we obtain:
\[
F_{\E}(\vn) \lesssim h_{\E} \vert \vn \vert _{1,\E} \Vert f \Vert _{0,\E} \lesssim h_{\E}^{1-\beta} \vert \vn \vert _{1,\E} \Vert f \Vert _{0,\E}\lesssim \sigma^{n(1-\beta)}\vert \vn \vert_{1,\E},\quad \forall \vn\in \VnE.
\]
\end{proof}

We point out that for the proof of Lemmata \ref{lem local discontinuous approximation near origin} and \ref{lem local discontinuous approximation near origin loading term}
we work directly on the polygon without the need of using the covering squares technique of assumption (\textbf{D4}),
like in Lemmata \ref{lem local discontinuous approximation far from origin} and \ref{lem local discontinuous approximation far from origin loading term}.
This justifies the fact that in assumption (\textbf{D4}) we did not require the existence of a collection of squares $\mathcal C_n^0$ associated with the finest layer $L_0$ but only the existence of collection $\mathcal C_1^n$
associated with all the other layers.

\subsection{Approximation by functions in the virtual space} \label{subsection virtual estimates}
Here, we treat the approximation of the third term in the right-hand side of \eqref{global VEM decomposition}.
We observe that this term has two main differences with respect to the other two.
The first difference is that we need an approximant $\uI$ which is globally continuous; the second one is that $\uI$ is not a piecewise polynomial but a function belonging to the virtual space $\Vn$.

As done in Section \ref{subsection discontinuous estimates}, we split the analysis into two parts.
Firstly, we work on polygons abutting the singularity, see Lemma \ref{lemma virtual approximation far from sing}; secondly, we work on elements $\E$ in the first layer $L_0$,
see Lemma \ref{lemma virtual approximation near sing}.

\begin{lem} \label{lemma virtual approximation far from sing}
Let assumptions (\textbf{D1})-(\textbf{D4}) hold. Let $\E \in L_j$, $j=1,\dots, n$.
Let $f$, the right-hand side of \eqref{continuous weak problem}, belong to space $\mathcal B _{\beta}^0 (\Omega)$; consequently,
$\u$, the solution of problem \eqref{continuous weak problem}, belongs to space $\mathcal B _{\beta}^2 (\Omega)$, see Theorem \ref{theorem Babuska regularity} and definition \eqref{countably normed space}.
Assume that $\pe \approx \pE$ if $\e \in \partial \E$. Assume moreover that if $\E \in L_1$, then $\pE \approx 2$.
Then, for all $1\le \sE \le \pE$, there exists $\uI \in \VnE$ such that:
\begin{equation} \label{claim lemma virtual approximation far from sing}
\vert \u - \uI \vert _{1,\E}^2 \lesssim \Vert f - \Pizpmd f \Vert_{0,\E}^2 + \sigma^{(n-j)(3-2\beta)} \pE ^{-2\sE -1 } \left( \frac{\rho e}{2}  \right)^{2\sE+1} \sum_{\e \in \mathcal E ^\E} \vert \u \vert^2_{H_\beta^{\sE+1,2}(\e)},
\end{equation}
where we recall that $\Pizpmd$ is the $L^2(\E)$ orthogonal projection from $\VnE$ into $\mathbb P_{\pE-2}(\E)$,
$\sigma$ is the grading factor from assumption (\textbf{D3}) and $\rho = \max \left( 1, \frac{1-\sigma}{\sigma} \right)$.
\end{lem}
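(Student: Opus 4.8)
The plan is to build the virtual interpolant $\uI\in\VnE$ directly from its defining data. On the boundary, for each $\e\in\mathcal E^\E$ let $\uI|_\e\in\mathbb P_{\pe}(\e)$ be a one‑dimensional $hp$ projection of $\u|_\e$ that interpolates $\u$ at the two endpoints of $\e$, so that the edge pieces glue to a continuous element of $\mathbb B(\partial\E)$ and $\u-\uI$ vanishes at all vertices of $\E$; in the interior impose $-\Delta\uI=\Pizpmd\u=\Pizpmd f$. Since $\Pizpmd f\in\mathbb P_{\pE-2}(\E)$ and the prescribed trace lies in $\mathbb B(\partial\E)$, the function $\uI$ is a well defined element of $\VnE$. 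Set $w:=\u-\uI$; then $-\Delta w=f-\Pizpmd f$ in $\E$ and $w|_{\partial\E}=:\psi$, with $\psi|_\e=\u|_\e-\uI|_\e$ vanishing at the vertices of $\E$.

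Next I split $w=z+v$, where $z\in H^1_0(\E)$ solves $-\Delta z=f-\Pizpmd f$ and $v$ is the harmonic function with $v|_{\partial\E}=\psi$. Testing the equation for $z$ against $z$ and using the definition of $\Vert\cdot\Vert_{-1,\E}$ together with a Poincar\'e inequality on $\E$ gives $|z|_{1,\E}\lesssim\Vert f-\Pizpmd f\Vert_{-1,\E}\lesssim\Vert f-\Pizpmd f\Vert_{0,\E}$, which produces the first term in \eqref{claim lemma virtual approximation far from sing}. It remains to estimate $|v|_{1,\E}$.

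Since the harmonic extension minimises the $H^1$–seminorm over all $H^1(\E)$–extensions of $\psi$, I bound $|v|_{1,\E}$ by the seminorm of an explicit extension built on the shape–regular subtriangulation $\tautilden(\E)$ of Remark \ref{remark regular subtriangulation}: on each triangle $\T\in\tautilden(\E)$ whose only boundary edge is $\e=\e_\T\subset\partial\E$, apply the bounded trace–inverse operator from one edge of a triangle to lift $\psi|_{\e_\T}$ to $\T$ with vanishing trace on the two interior edges, so that (after scaling to a reference triangle, with the hidden constant depending only on $\gamma$) $|v_\T|_{1,\T}\lesssim\Vert\psi|_{\e_\T}\Vert_{H^{1/2}_{00}(\e_\T)}$; gluing gives $\widetilde\psi\in H^1(\E)$ with $\widetilde\psi|_{\partial\E}=\psi$ and $|v|_{1,\E}\le|\widetilde\psi|_{1,\E}\lesssim\bigl(\sum_{\e\in\mathcal E^\E}\Vert\psi|_\e\Vert^2_{H^{1/2}_{00}(\e)}\bigr)^{1/2}$. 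Using $\pe\approx\pE$, each edge term is then controlled by one–dimensional $hp$ approximation theory \cite[Chapter 3, Lemmata 4.50 and 4.53]{SchwabpandhpFEM}: with the above choice of $\uI|_\e$, $\Vert\psi|_\e\Vert^2_{H^{1/2}_{00}(\e)}$ is estimated by the geometric mean of the $L^2(\e)$– and $H^1(\e)$–approximation errors, which brings in the factor $\frac{\Gamma(\pE-\sE+1)}{\Gamma(\pE+\sE+1)}\bigl(\tfrac{\rho e}{2}\bigr)^{2\sE+1}\approx\pE^{-2\sE-1}\bigl(\tfrac{\rho e}{2}\bigr)^{2\sE+1}$ and the weighted edge seminorm $|\u|^2_{H^{\sE+1,2}_\beta(\e)}$ (with $\u|_\e\in H^{\sE+1,2}_\beta(\e)$ coming from $\u\in\mathcal B^2_\beta(\Omega)$ and trace theory on $\Q(\E)$); the $h$–dependence $h_\E^{3-2\beta}$, converted into $\sigma^{(n-j)(3-2\beta)}$ by assumption (\textbf{D3}), follows from the scaling arguments of \cite[Lemmata 4.50, 4.53]{SchwabpandhpFEM}, the localisation of the weight $\Phi_\beta$ near $\mathbf 0$ being handled exactly as in Lemma \ref{lem local discontinuous approximation far from origin}; for $\E\in L_1$ the assumption $\pE\approx2$ is used so that already the lowest admissible $\sE=1$ yields the stated rate. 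Collecting the bounds for $z$ and $v$ and summing over the finitely many edges of $\E$ (assumption (\textbf{D2})) gives the claim.

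The main obstacle is the estimate of $|v|_{1,\E}$: one must pass from the global boundary error to edgewise weighted Sobolev seminorms of $\u$ while simultaneously tracking (i) the sharp polynomial rate $\pE^{-2\sE-1}$, which forces the use of the $H^{1/2}_{00}$–interpolation argument (the geometric mean of the $L^2$ and $H^1$ edge errors) rather than a crude trace bound, and (ii) the correct power of $h_\E=\sigma^{n-j}$ together with the weight $\Phi_\beta$, where assumptions (\textbf{D3})--(\textbf{D4}) and the scaling lemmas of \cite{SchwabpandhpFEM} enter. Everything else reduces to Cauchy--Schwarz, the Poincar\'e inequality, and the cited one–dimensional $hp$ estimates.
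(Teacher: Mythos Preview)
Your proof is correct and reaches the same bound, but the route differs from the paper's. You split $w=\u-\uI$ as $z+v$ with $z\in H^1_0(\E)$ absorbing the bulk forcing and $v$ harmonic carrying the boundary mismatch, then control $|v|_{1,\E}$ via the Dirichlet principle and an explicit edgewise lifting on the subtriangulation of Remark~\ref{remark regular subtriangulation}, which naturally produces the localised $H^{1/2}_{00}(\e)$ norms. The paper instead integrates $|\u-\uI|^2_{1,\E}$ by parts and invokes the Neumann trace inequality $\Vert\partial_{\mathbf n} w\Vert_{-1/2,\partial\E}\lesssim|w|_{1,\E}+\Vert\Delta w\Vert_{0,\E}$ together with the global norm $\Vert\u-\uE-c_1\Vert_{1/2,\partial\E}$ (subtracting boundary and bulk averages and using Poincar\'e); the $\Vert f-\Pizpmd f\Vert_{0,\E}$ contribution enters there through $\Delta w$ rather than through a separate $z$-problem. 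Your decomposition is more modular and sidesteps the Neumann trace machinery, at the price of having to justify the $H^{1/2}_{00}$ lifting on each subtriangle; the paper's argument is shorter but leans on heavier trace tools. Both reduce the boundary term to the same interpolated one-dimensional $hp$ rate. One point your sketch leaves implicit: for $\E\in L_1$ the paper takes the \emph{linear} interpolant on edges at the $L_0$/$L_1$ interface, not the full $hp$ projection, so that the local interpolants glue to a globally continuous $\uI\in\Vn$; this does not affect the local estimate (since $\pE\approx 2$ there) but is needed for the subsequent global argument.
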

\begin{proof}
Before starting the proof, we observe that the boundary norm in the right-hand side of \eqref{claim lemma virtual approximation far from sing} exists,
since $\u \in \mathcal B^2_{\beta}(\Omega)$ implies that $\u \in H^t(\E)$ for all $t \in \mathbb N$ and polygons $\E \notin L_0$.

We define $\uI$ as the \emph{weak} solution of the following problem:
\begin{equation} \label{definition virtual approximant}
\begin{cases}
-\Delta \uI = \Pizpmd f & \text{in } \E,\\
\uI = \uE & \text{on } \partial \E,\\
\end{cases}
\end{equation}
where $\uE \in \mathbb B(\partial \E)$, see \eqref{definition local non uiniform boundary},
is defined in the following way. Assume for the time being that $\E \notin L_1$. Let $\Ihat = [-1,1]$.
Given an edge $\e \subseteq \partial \E$, $\uE$ is defined as the push-forward of a function $\uhatE \in \mathbb P_{\pe}(\Ihat)$ which we fix as follows.
Let $\uhat$ be the pull-back of $\u|_\e$ on $\Ihat$.
Then, $\uhatE'$ is the Legendre expansion of $\uhat$ up to order $\pe - 1$. In particular, we write:
\begin{equation} \label{Legendre expansion}
\uhat'(\xi) = \sum_{i=0}^{\infty} c_i L_i(\xi),\quad  \quad \uhatE'(\xi) = \sum_{i=0}^{\pe-1} c_i L_i(\xi).
\end{equation}
Here $\{L_i(\xi)\}_{i=0}^{\infty}$ is the $L^2(\Ihat)$ orthogonal basis of Legendre polynomials, with $L_i(-1)=(-1)^i$ and $L_i(1)=1$.
Next, we define $\uhatE$ as:
\[
\uhatE(\xi) = \int_{-1}^{\xi} \uhatE'(\eta) d\eta + \uhat(-1).
\]
It is possible to prove that $\uhatE$ interpolates $\uhat$ at the endpoints of $\Ihat$ using the definition of $\uhatE$ and the fundamental theorem of calculus.
Recalling \cite[Theorem 3.14]{SchwabpandhpFEM} and using simple algebra, the following holds true:
\begin{equation} \label{virtual error boundary}
\Vert \uhat - \uhatE \Vert_{\ell, \Ihat} \lesssim e^{\sE} \pe ^{-\sE - 1 + \ell} \vert \u \vert_{\sE+1, \Ihat}, \quad \ell=0,\,1,\quad \forall\; 1\le \sE \le \pE.
\end{equation}
Applying a scaling argument, interpolation theory (see \cite{triebel, tartar}) and summing on all the edges, we get:
\begin{equation} \label{virtual error boundary}
\Vert \u - \uE \Vert_ {\frac{1}{2},\partial \E}^2 \lesssim \sum_{\e \in \mathcal E^\E} e^{2\sE+1} \left( \frac{h_\e}{\pe} \right)^{2\sE + 1} \vert \u \vert^2_ {\sE + 1, \e}, \quad \forall \;1\le \sE \le \pE.
\end{equation}
If now $\E \in L_1$, we define $\uE|_\e$ as above if $\e$ does not belong to the interface between $L_0$ and $L_1$, otherwise $\uI$ is defined as the linear interpolant of $\u$ at the two endpoints of $\e$.
We point out that
\eqref{virtual error boundary} remains valid also if $\E \in L_1$ paying an additional constant $c^{2\sE+1}$, since $\pE \approx 2$ whenever $\E\in L_1$.
We also note that \eqref{virtual error boundary} implies, recalling that $\pe \approx \pE$ if $\e \subseteq \partial \E$ and following the ideas in \cite[Lemma 3.39]{SchwabpandhpFEM}:
\begin{equation} \label{virtual error H12 boundary}
\Vert \u - \uI \Vert^2_{\frac{1}{2},\partial \E} = \Vert \u - \uE \Vert^2_{\frac{1}{2},\partial \E} \lesssim \sigma^{(n-j)(3-2\beta)} \pE^{-2\sE - 1} \left(  \frac{\rho e}{2}  \right)^{2\sE + 1} \sum_{\e \in \mathcal E^\E} \vert \u \vert^2_ {H^{\sE+1,2}_{\beta}(\e)},
\end{equation}
where we recall that $j$ denotes the number of the layer to which $\E$ belongs.

We are now ready to prove the error estimate. For arbitrary constants $c_1$ and $c_2$, there holds (also recalling that $(f- \Pizpmd f)$ is $L^2$-orthogonal to constants):
\[
\begin{split}
\vert \u - \uI \vert^2_{1,\E}	& = \int_\E \vert \nabla (\u - \uI - c_1) \vert^2 = \int _{\partial \E} \frac{\partial (\u - \uI)}{\partial \mathbf n} (\u - \uI-c_1) - \int_\E (f- \Pizpmd f) (\u - \uI - c_2)\\
					& \le \left\Vert \frac{\partial (\u - \uI)}{\partial \mathbf n} \right \Vert_{-\frac{1}{2},\partial \E} \Vert \u - \uE -c_1\Vert_{\frac{1}{2},\partial \E} + \Vert f- \Pizpmd f \Vert_{0,\E} \Vert \u - \uI -c_2\Vert_{0,\E}.
\end{split}
\]
Applying the trace inequalities on Neumann and Dirichlet traces, choosing $c_2$ to be the average of $\u - \uE$ on $\E$ and applying a Poincar\'e inequality, we get:
\[
\begin{split}
\vert \u - \uI  \vert_{1,\E}^2	& \lesssim \left(  \vert \u - \uI \vert_{1,\E}  +\Vert f - \Pizpmd f \Vert_{0,\E}  \right) \Vert  \u - \uE -c_1 \Vert_{\frac{1}{2},\partial \E}  +\Vert f - \Pizpmd f \Vert_{0,\E} \vert \u - \uI \vert_{1,\E}\\
					& \lesssim  \Vert  \u - \uI - c_1 \Vert_{1,\E} \left\{ \Vert f - \Pizpmd f \Vert_{0,\E} + \Vert \u - \uI -c_1 \Vert_{\frac{1}{2},\partial \E}  \right\}.
\end{split}
\]
We deduce, picking $c_1$ to be the average of $\u-\uI$ on $\partial \E$ and applying a Poincar\'e inequality:
\[
\vert \u - \uI \vert^2_{1,\E} \lesssim \Vert f- \Pizpmd f \Vert^2_{0,\E} + \Vert \u - \uE \Vert^2_{\frac{1}{2},\partial \E}.
\]
In order to conclude, it suffices to apply \eqref{virtual error H12 boundary}.
\end{proof}
We turn now our attention to the approximation on the polygons abutting the singularity.
\begin{lem} \label{lemma virtual approximation near sing}
Let assumptions (\textbf{D1})-(\textbf{D4}) hold.
Let $f$, the right-hand side of \eqref{continuous weak problem}, belong to space $\mathcal B _{\beta}^0 (\Omega)$; consequently,
$\u$, the solution of problem \eqref{continuous weak problem}, belongs to space $\mathcal B _{\beta}^2 (\Omega)$, see Theorem \ref{theorem Babuska regularity} and definition \eqref{countably normed space}.
Assume that $\pE=2$ if $\E \in L_0$ and $\pE \approx 2$ if $\E \in L_1$. Then there exists $\uI \in \VnE$ such that:
\[
\vert \u - \uI \vert^2_{1,\E} \lesssim \sigma^{2(1-\beta)n} \Vert \vert \mathbf x \vert^\beta \vert D^2 \u \vert \Vert^2_{0,\E} + \Vert f - \Pizpmd f \Vert^2_{0,\E}.
\]
where we recall that $\Pizpmd$ is the $L^2(\E)$ orthogonal projection from $\VnE$ into $\mathbb P_{\pE-2}(\E)$,
$\sigma$ is the grading factor discussed in assumption (\textbf{D3}) and $n+1$ is the number of layers.
\end{lem}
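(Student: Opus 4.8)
The plan is to mimic the construction used in the proof of Lemma \ref{lemma virtual approximation far from sing}, exploiting that here $\pE=2$ (so that $\mathbb P_{\pE-2}(\E)=\mathbb P_0(\E)$ and, in particular, affine functions and constants belong to $\VnE$) and replacing the $\p$-version edgewise Legendre truncation by a plain $h$-type argument in the spirit of Lemma \ref{lem local discontinuous approximation near origin}. Throughout we take, as is the case of interest, $\E\in L_0$. First I would let $u_I^{\mathrm{lin}}$ be the continuous piecewise linear interpolant of $\u$ on the subtriangulation of $\E$ obtained by connecting the singular vertex $\mathbf 0$ with the remaining vertices of $\E$; by (\textbf{D4}) this subtriangulation is uniformly shape regular, its number of triangles is uniformly bounded by (\textbf{D2}), and point values of $\u$ at the vertices are well defined since $\u\in\mathcal B^2_\beta(\Omega)\subseteq H^{2,2}_\beta(\E)$ (see the proof of Lemma \ref{lem local discontinuous approximation near origin}). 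The trace $\uE:=u_I^{\mathrm{lin}}|_{\partial\E}$ is then the edgewise linear interpolant of $\u$ at the vertices of $\E$; hence $\uE\in\mathbb B(\partial\E)$ because $\pe\ge 2$ on each edge, $\uE$ vanishes on $\partial\Omega$, and $\uE$ coincides on the $L_0$--$L_1$ interface with the endpoint-interpolation datum used for $L_1$ elements in Lemma \ref{lemma virtual approximation far from sing}, so the globally assembled approximant will lie in $\Vn$.

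Next I would define $\uI\in\VnE$ as the weak solution of
\[
-\Delta \uI = \Pizpmd f \quad\text{in }\E, \qquad \uI=\uE \quad\text{on }\partial\E ,
\]
which indeed belongs to $\VnE$ since $\Delta\uI=-\Pizpmd f\in\mathbb P_0(\E)=\mathbb P_{\pE-2}(\E)$ and $\uI|_{\partial\E}=\uE\in\mathbb B(\partial\E)$. By construction $\uI$ and $u_I^{\mathrm{lin}}$ share the same trace, so $z:=\uI-u_I^{\mathrm{lin}}\in H^1_0(\E)$. Testing the variational identities $\int_\E\nabla\uI\cdot\nabla\phi=\int_\E\Pizpmd f\,\phi$ and $\int_\E\nabla\u\cdot\nabla\phi=\int_\E f\,\phi$ (the latter because $-\Delta\u=f$) with $\phi=z$ gives
\[
\vert z\vert^2_{1,\E} = \int_\E (\Pizpmd f - f)\, z + \int_\E \nabla(\u - u_I^{\mathrm{lin}})\cdot\nabla z \le \Big( \cP h_\E \Vert f - \Pizpmd f\Vert_{0,\E} + \vert \u - u_I^{\mathrm{lin}}\vert_{1,\E} \Big)\, \vert z\vert_{1,\E},
\]
where I used Cauchy--Schwarz and the Friedrichs inequality on $H^1_0(\E)$ (whose constant scales like $h_\E\approx\sigma^n\le 1$). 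Therefore $\vert z\vert_{1,\E}\lesssim\Vert f-\Pizpmd f\Vert_{0,\E}+\vert\u-u_I^{\mathrm{lin}}\vert_{1,\E}$, and by a triangle inequality
\[
\vert \u - \uI\vert_{1,\E} \le \vert \u - u_I^{\mathrm{lin}}\vert_{1,\E} + \vert z\vert_{1,\E} \lesssim \Vert f - \Pizpmd f\Vert_{0,\E} + \vert \u - u_I^{\mathrm{lin}}\vert_{1,\E}.
\]

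It remains to bound the interpolation error $\vert\u-u_I^{\mathrm{lin}}\vert_{1,\E}$, and this is the step where the corner singularity genuinely enters and which I expect to be the main obstacle. I would split $\vert\u-u_I^{\mathrm{lin}}\vert^2_{1,\E}=\sum_{\T}\vert\u-u_I^{\mathrm{lin}}\vert^2_{1,\T}$ over the triangles $\T$ of the subtriangulation; each $\T$ has $\mathbf 0$ as one of its (three, nonaligned) vertices, and on $\T$ the function $U:=(\u-u_I^{\mathrm{lin}})|_\T$ vanishes at all three vertices and satisfies $D^2U=D^2\u$. Hence inequality \eqref{inequality following compact embedding}, applied with $\E$ replaced by $\T$ (a polygon star-shaped with respect to $\mathbf 0$, with shape-regularity constant uniform by (\textbf{D4})), yields
\[
\vert \u - u_I^{\mathrm{lin}}\vert^2_{1,\T} \lesssim h_\T^{2(1-\beta)} \Vert |\mathbf x|^\beta |D^2 \u| \Vert^2_{0,\T} \approx \sigma^{2(1-\beta)n} \Vert |\mathbf x|^\beta |D^2 \u| \Vert^2_{0,\T},
\]
using $h_\T\approx h_\E\approx\sigma^n$ from the grading assumption (\textbf{D3}). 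Summing over the uniformly bounded collection of triangles gives $\vert\u-u_I^{\mathrm{lin}}\vert^2_{1,\E}\lesssim\sigma^{2(1-\beta)n}\Vert|\mathbf x|^\beta|D^2\u|\Vert^2_{0,\E}$, and inserting this into the previous display and squaring produces the claim. An equivalent route, parallel to the proof of Lemma \ref{lemma virtual approximation far from sing}, is to integrate $\vert\u-\uI\vert^2_{1,\E}$ by parts, reduce via the Neumann and Dirichlet trace inequalities and Poincar\'e to $\Vert f-\Pizpmd f\Vert^2_{0,\E}+\Vert\u-\uE\Vert^2_{1/2,\partial\E}$, and then control the boundary term by $\Vert\u-u_I^{\mathrm{lin}}\Vert^2_{1,\E}$ through the trace theorem; the delicate weighted interpolation estimate near $\mathbf 0$ is the same either way, so I would favour the direct comparison above.
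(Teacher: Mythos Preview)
Your proof is correct, and it takes a genuinely different route from the paper. The paper defines $\uI$ by the same local Dirichlet problem with the same affine boundary datum $\uE$, but then reuses the integration-by-parts machinery of Lemma \ref{lemma virtual approximation far from sing} (Neumann trace in $H^{-1/2}$, Dirichlet trace in $H^{1/2}$, Poincar\'e) to arrive at
\[
\vert \u - \uI \vert^2_{1,\E} \lesssim \Vert f-\Pizpmd f\Vert^2_{0,\E} + \Vert \u-\uE-c_1\Vert^2_{\frac12,\partial\E},
\]
and only afterwards controls the boundary term by passing back to the volume via the trace theorem applied to $\u-\widetilde u_\E$, where $\widetilde u_\E$ is exactly your $u_I^{\mathrm{lin}}$. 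You bypass this detour entirely: by comparing $\uI$ directly with $u_I^{\mathrm{lin}}$ through the energy identity for $z=\uI-u_I^{\mathrm{lin}}\in H^1_0(\E)$, you never need the $H^{\pm 1/2}$ trace norms at all, only Friedrichs on $H^1_0(\E)$ and Cauchy--Schwarz. Both approaches reduce to the same weighted interpolation bound on the subtriangulation of $\E$ from $\mathbf 0$, which you handle correctly via \eqref{inequality following compact embedding}. Your argument is more elementary and self-contained; the paper's has the (modest) advantage of recycling verbatim the intermediate estimate already established in Lemma \ref{lemma virtual approximation far from sing}. The alternative route you sketch at the end is precisely the paper's.
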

\begin{proof}
We consider $\uI$ defined as in \eqref{definition virtual approximant}; in particular, we fix $\uE$, the trace of $\uI$ on $\partial \E$ to be the piecewise affine interpolant of $\u$ at the vertices of $\E$.
From Lemma \ref{lemma virtual approximation far from sing}, we have:
\begin{equation} \label{formula lemma 5.6}
\vert \u - \uI \vert^2_{1,\E} \lesssim \Vert f- \Pizpmd f \Vert_{0,\E}^2 + \Vert \u - \uI -c_1\Vert^2_{\frac{1}{2},\partial \E},
\end{equation}
where $c_1$ is the average of $\u-\uI$ on $\partial \E$.

In order to get the claim, it suffices to bound the second term.
As in Lemma \ref{lem local discontinuous approximation near origin}, we consider the subtriangulation $\tautilden = \tautilden(\E)$ of $\E$ obtained by connecting all the vertices of $\E$ to $\mathbf 0$, see assumption (\textbf{D4}).
In particular, every triangle $\T \in \tautilden$ is star-shaped with respect to a ball of radius $\ge \widetilde \gamma h_\T$, where $\widetilde \gamma$ is a positive universal constant.
We define $\utilde _\E$ as the piecewise linear interpolant polynomials over the triangular subtriangulation, interpolating $\u$ at the vertices of $\T$, for every $\T \in \tautilden$.
Using \cite[Lemma 4.16]{SchwabpandhpFEM} and applying a Poincar\'e inequality, yield to:
\begin{equation} \label{estimate near sing uE}
\begin{split}
&\Vert \u - \uI -c_1 \Vert_{\frac{1}{2},\E}^2 \\& \lesssim \Vert \u - \utilde_\E -c_1 \Vert_{1,\E} ^2
\lesssim \sum_{\T \in \tautilden} \vert  \u - \utilde_\E \vert^2_{1,\T} \lesssim \sum_{\T \in \tautilden} h_\T^{2(1-\beta)} \Vert \vert \xbold\vert^\beta \vert D^2\u \vert \Vert^{2}_{0,\T}
\lesssim \sigma^{2(1-\beta)} \Vert \vert \mathbf x \vert^\beta \vert D^2 \u \vert \Vert^2_{0,\E}.	
\end{split}
\end{equation}
We stress that the third inequality in \eqref{estimate near sing uE} holds since $\utilde_\E |_{\T}$ is a linear polynomial and therefore $D^2 \utilde _\E=0$ on all $\T \in \tautilden$.
\end{proof}
We note that in Lemmata \ref{lemma virtual approximation far from sing} and \ref{lemma virtual approximation near sing} the error between $f$ and its $L^2$ projection can be bounded using Lemmata
\ref{lem local discontinuous approximation far from origin loading term} and \ref{lem local discontinuous approximation near origin loading term}.
We also point out that the hypotesis concerning the distribution of the local degrees of accuracy, i.e. the fact that $\pE =2$ if $\E\in L_0$, $\pE\approx 2$ if $\E \in L_1$, $\pe \approx \pE$ if $\e \subseteq \partial \E$,
are in accordance with the forthcoming definition \eqref{assumption mu} that we will introduce for the proof of Theorem \ref{theorem exponential convergence}.
Finally, we point out in Lemmata \ref{lemma virtual approximation far from sing} and \ref{lemma virtual approximation near sing} we introduced a function $\uI$ which is locally in $\VnE$ and globally continuous;
thus, $\uI$ is a function in the global Virtual Element Space $\Vn$ introduced in \eqref{global non uniform virtual space}.

\subsection{Exponential convergence} \label{subsection exponential convergence}
We set $\Omegaext=\cup_{n\in \mathbb N} \Omegaext_{n} = \Omegaext_{1}$, where the $\Omegaext_{n}$ are introduced in (\textbf{D4}).
We recall that we are assuming that $\mathbf 0 \notin \partial {\Omegaext}$.

We observe that our error analysis needs regularity on $f$ and subsequently on $\u$, the right-hand side and the solution of problem \eqref{continuous weak problem}, respectively. In particular, we will require:
\begin{equation} \label{regularity on u}
f \text{ can be extended to a function in } \mathcal B^0_\beta\left(\Omegaext \right), \quad \u \text{ can be extended to a function in } \mathcal B^2_\beta\left(\Omegaext \right).
\end{equation}
With a little abuse of notation we will call this two functions $f$ and $\u$.
Assuming $f \in  \mathcal B^0_\beta\left(\Omega \right)$ automatically implies that $\u$ is in $\mathcal B^2_\beta\left(\Omega \right)$; this follows from classical elliptic regularity theory, see Theorem \ref{theorem Babuska regularity}.
In the classical $hp$ Finite Element Method, this regularity leads to exponential convergence of the energy error, see \cite{SchwabpandhpFEM}.
In order to prove the same exponential convergence with $hp$ VEM, we need \eqref{regularity on u} since the approximation by means of polynomials on the polygons not abutting the singularity needs regularity of the target function on a square containing the polygon,
see Lemmata \ref{lem local discontinuous approximation far from origin} and \ref{lem local discontinuous approximation far from origin loading term}.


We recall the inflated domain $\Omegaext$ has been built in such a way that the singularity is never at the interior of $\Omegaext$, see assumption (\textbf{D3}).
We highlight also the fact that \eqref{regularity on u} can be easily generalized to the case of multiple singularities, see e.g. \cite{SchwabpandhpFEM}.

In order to obtain exponential convergence of the energy error in terms of the number of degrees of freedom, we will henceforth assume that the vector $\mathbf p$ of the degrees of accuracy associated with $\taun$ is given by:
\begin{equation} \label{assumption mu}
\mathbf p_{\E} = \begin{cases}
2& \text{if } \E\in L_0,\\
\max \left(2,\lceil \mu \cdot (j + 1)\rceil \right)& \text{if } \E \in L_j,\, j\ge 1,
\end{cases}
\end{equation}
where $\mu$ is a positive constant which will be determined in the proof of Theorem \ref{theorem exponential convergence}
and where $\lceil \cdot \rceil$ is the ceiling function. 
Note that choice \eqref{assumption mu} could be modified asking for $\pE=1$ if $\E \in L_0$; under this requirement in fact Lemmata
\ref{lem local discontinuous approximation near origin}, \ref{lem local discontinuous approximation near origin loading term} and \ref{lemma virtual approximation near sing} are still valid.
Nonetheless, we prefer to use \eqref{assumption mu} in order to avoid technical discussions on the construction of the right-hand side of the method and keep the simple representation \eqref{discrete loading term}.

It is clear from \eqref{assumption mu} that if $\E_1$ and $\E_2$ belong to the $j$-th and the $(j+1)$-th layers respectively, for some $j=1,\dots,n-1$, then $\p_{\E_1} \approx \p_{\E_2}$, independently on all the other discretization parameters.
Thus, owing to Section \ref{section stability}, we also have $\alpha(\E_1)\approx \alpha({\E_2})$, independently on all the other discretization parameters.
Besides, $\pe \approx \pE$ whenever $\e \subseteq \partial\E$.
\begin{thm} \label{theorem exponential convergence}
Let $\{\taun\}_n$ be a sequence of polygonal decomposition satisfying (\textbf{D1})-(\textbf{D4}). Let $\u$ and $\un$ be the solutions of problems \eqref{continuous weak problem} and \eqref{discrete weak problem} respectively;
let $f$ be the right-hand side of problem \eqref{continuous weak problem}. Let $N=N(n)=\dim (\Vn)$. Assume that $\u$ and $f$ satisfy \eqref{regularity on u}.
Then, there exists $\mu >0$ such that $\mathbf p$, defined in \eqref{assumption mu}, guarantees the following exponential convergence of the $H^1$ error in terms of the number of degrees of freedom:
\begin{equation} \label{exponential convergence in term of N dofs}
\Vert u-\un \Vert _{1,\Omega} \lesssim \exp (-b \sqrt[3]{N}),
\end{equation}
with $b$ a constant independent of the discretization parameters.
\end{thm}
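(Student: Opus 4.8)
The plan is to insert the local approximation bounds of Sections~\ref{subsection discontinuous estimates} and~\ref{subsection virtual estimates} into the abstract decomposition of Lemma~\ref{lemma local VEM decomposition} and then to optimize the local polynomial degrees. Fix $\E\in L_j$. By \eqref{assumption mu} one has $\pE\approx\mu(j+1)$, and $\pe\approx\pE$ for every edge $\e\subseteq\partial\E$; together with \eqref{anaisotropy dependence on p} and the fact that each layer contains $O(1)$ elements (assumption~(\textbf{D3})), the amplification factor $\alpha(\E)$ in \eqref{global VEM decomposition} grows at most polynomially in $j$, say $\alpha(\E)\lesssim(j+1)^{q_0}$ for a fixed exponent $q_0$ depending only on $\gamma$ and on the largest reentrant angle. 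For $\E\in L_0$, instead, $\pE=2$ and $\alpha(\E)\lesssim1$. Such polynomial factors will be harmless against the geometric decay produced in the next two steps.

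\emph{Layer $L_0$.} For $\E\in L_0$ we estimate $|\u-\upi|_{1,\E}$ and $\bigfnE$ through Lemmata~\ref{lem local discontinuous approximation near origin} and~\ref{lem local discontinuous approximation near origin loading term}, and $|\u-\uI|_{1,\E}$ through Lemma~\ref{lemma virtual approximation near sing} (whose residual term $\Vert f-\Pizpmd f\Vert_{0,\E}$ is again controlled by Lemma~\ref{lem local discontinuous approximation near origin loading term}). Using that $\u\in\mathcal B^2_\beta(\Omegaext)$, so that $\Vert|\mathbf x|^\beta|D^2\u|\Vert_{0,\E}$ is controlled by the constant $c$ of \eqref{countably normed space}, and the grading $h_\E\approx\sigma^n$ from (\textbf{D3}), each of the three contributions is $\lesssim\sigma^{(1-\beta)n}$; since $L_0$ contains boundedly many elements, its whole contribution to \eqref{global VEM decomposition} is $\lesssim\sigma^{(1-\beta)n}$, which decays geometrically in $n$.

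\emph{Layers $L_j$, $j\ge1$.} Here we apply Lemmata~\ref{lem local discontinuous approximation far from origin}, \ref{lem local discontinuous approximation far from origin loading term} and~\ref{lemma virtual approximation far from sing} with $\sE=\lceil\lambda\,\pE\rceil$, where $\lambda\in(0,1)$ is a free parameter; we bound $\sum_{\E\in L_j}|\u|^2_{H^{\sE+3,2}_\beta(\Q(\E))}$ (and, for the virtual term, the corresponding sum of edge seminorms) by a single global seminorm on $\Omegaext$ thanks to the finite-overlap property of the covering squares in~(\textbf{D4}), and then by $c\,d^{\sE+1}(\sE+1)!$ via the $\mathcal B^2_\beta(\Omegaext)$ regularity \eqref{regularity on u}. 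Combining this factorial with the quotient $\Gamma(\pE-\sE+1)/\Gamma(\pE+\sE+1)$ and the factor $(\rho/2)^{2\sE}$ of those lemmata and applying Stirling's formula gives
\[
\frac{\Gamma(\pE-\sE+1)}{\Gamma(\pE+\sE+1)}\Big(\tfrac{\rho d}{2}\Big)^{2\sE}\big((\sE+1)!\big)^2\ \lesssim\ \Big[\frac{(1-\lambda)^{1-\lambda}\,\lambda^{2\lambda}}{(1+\lambda)^{1+\lambda}}\Big(\tfrac{\rho d}{2}\Big)^{2\lambda}\Big]^{\pE}\ =:\ q^{\pE},
\]
and, since the bracket tends to $1$ as $\lambda\to0^+$ with logarithm dominated by the negative term $2\lambda\log\lambda$, we may fix $\lambda$ (hence $\mu$) small enough that $q<1$ for the given $\rho$ and $d$; the virtual term is treated in the same way, the factor $\pE^{-2\sE-1}$ there playing the role of the Gamma quotient. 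Using also the geometric prefactor $\sigma^{(n-j)\theta}$ (with $\theta=1-\beta$ for the polynomial contributions and $\theta=\tfrac32-\beta$ for the virtual one) and $\pE\approx\mu j$, the contribution of layer $L_j$ is $\lesssim(j+1)^{q_0}\sigma^{(n-j)\theta}q^{\mu j/2}$; summing over $j=1,\dots,n$ this geometric-type series is $\lesssim e^{-b'n}$ with $b'>0$ independent of $n$ (morally $b'=\min\{\theta\log(1/\sigma),\ -\tfrac\mu2\log q\}$, reduced slightly to absorb the polynomial factor).

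\emph{Degrees of freedom and conclusion.} By \eqref{assumption mu} and the structure of the local spaces $\VnE$ one has $\dim\VnE\lesssim\pE^2\lesssim\mu^2 j^2$, so with $O(1)$ elements per layer $N=\dim\Vn\lesssim\sum_{j=0}^n\mu^2 j^2\lesssim\mu^2 n^3$, i.e.\ $n\gtrsim N^{1/3}$. Feeding the two steps above into Lemma~\ref{lemma local VEM decomposition} yields $\Vert\u-\un\Vert_{1,\Omega}\lesssim e^{-b'n}\lesssim\exp(-b\sqrt[3]{N})$, which is \eqref{exponential convergence in term of N dofs}. The delicate point is the third step: the single parameter $\mu$ (equivalently $\lambda=\sE/\pE$) must be tuned so that the factorial blow-up $(\sE+1)!$ of the analytic-regularity constants, the Gamma quotient from the $\p$-approximation estimates, and the geometric layer factors $\sigma^{n-j}$ with $\pE\approx\mu j$ combine into a series decaying geometrically in $n$ at a rate that does not deteriorate as $n\to\infty$, while the polynomial amplification $\alpha(\E)$ and the per-layer element counts stay controlled; once this uniform bookkeeping of constants is arranged, the rest is the scaling arguments and the lemmata already established.
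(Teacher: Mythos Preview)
Your approach is essentially the same as the paper's: feed the local approximation lemmata of Sections~\ref{subsection discontinuous estimates}--\ref{subsection virtual estimates} into the abstract decomposition of Lemma~\ref{lemma local VEM decomposition}, choose $s_\E\propto p_\E$, balance the factorial regularity constants against the Gamma quotients via Stirling to get geometric decay in $n$, and then convert $n$ into $N^{1/3}$. The paper is terser and simply cites \cite[Theorem~4.51]{SchwabpandhpFEM} for the bookkeeping you spell out in your third paragraph.

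There is one slip worth fixing. Your bounds $\alpha(\E)\lesssim(j+1)^{q_0}$ for $\E\in L_j$ and $\alpha(\E)\lesssim 1$ for $\E\in L_0$ are not correct. By definition \eqref{local parameter stability} the denominator of $\alpha(\E)$ is $\min_{\E'\in\taun}\alpha_*(\E')$, a \emph{global} minimum over the whole mesh; by Theorem~\ref{theorem stability bounds} this minimum is of order $(\max_{\E'}p_{\E'})^{-5}\approx(\mu n)^{-5}$, attained on the outermost layer. Hence, as in \eqref{bound every alpha}, one has $\alpha(\E)\lesssim(n+1)^{q_0}$ for \emph{every} element, independently of its layer index $j$. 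Fortunately this does not damage your argument: the factor $(n+1)^{q_0}$ pulls out of the layer sum as a common multiplier and is then absorbed by the exponential $\sigma^{(1-\beta)n}$ (for $L_0$) or by the geometric series $\sum_j\sigma^{(n-j)\theta}q^{\mu j/2}$ (for $j\ge1$), exactly as you already indicate for the ``polynomial factors''; the conclusion $\lesssim e^{-b'n}$ survives unchanged.
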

\begin{proof}
It suffices to combine Lemma \ref{lemma local VEM decomposition}, the results of Section \ref{section stability}, Lemmata from \ref{lem local discontinuous approximation far from origin} to \ref{lemma virtual approximation near sing}
and to use the same arguments of \cite[Theorem 4.51]{SchwabpandhpFEM}, properly choosing the parameter $\mu$.

The basic idea behind the proof is that around the singularity, geometric mesh refinement are employed, since $\p$ approximation leads only to an algebraic decay of the error;
on the other hand, on polygons far from the singularity, it suffices to increase the degree of accuracy, since on such polygons both the loading term and the exact solution of \eqref{continuous weak problem}
are assumed to belong to the Babuska space $\mathcal B_\beta ^2(\Omegaext)$ defined in \eqref{countably normed space}
and therefore $\p$ approximation leads to exponential convergence of the local errors (see \cite[Theorem 5.6]{hpVEMbasic}).

Following \cite[Theorem 4.51]{SchwabpandhpFEM} and using Lemma \ref{lemma local VEM decomposition} yield:
\begin{equation} \label{copying Schwab}
\vert \u - \un \vert _{1,\Omega} \le \c \max_{\E' \in \taun}\alpha(\E') \sigma ^{2(1-\beta)(n+1)},
\end{equation}
where $\c$ is a constant independent of both the discretizations parameters and the number of layers. Applying \eqref{anaisotropy dependence on p}, we obtain:
\begin{equation} \label{bound every alpha}
\alpha(\E) \lesssim  \pE^2 \max_{\E' \in \taun}\p_{\E'}^5 \lesssim (n+1)^{7},\quad \forall \E \in \taun,
\end{equation}
where we recall $n+1$ denotes the number of layers. Plugging \eqref{bound every alpha} in \eqref{copying Schwab}, we get:
\[
\vert \u - \un \vert _{1,\Omega} \le \c (n+1)^{7} \sigma ^{2(1-\beta)(n+1)}.
\]
We infer:
\[
\vert \u - \un \vert_{1,\Omega} \lesssim \exp (-b (n+1)), \quad \text{for some }b>0.
\]

Now, we prove that $N \lesssim (n+1)^3$.
In order to see this, we proceed as follows. In each layer there exists a fixed maximum number of layers; this follows from the geometric assumptions (\textbf{D1}) and (\textbf{D3}),
applying for instance the arguments in \cite[Section 4]{hmps_harmonicpolynomialsapproximationandTrefftzhpdgFEM}.
Using geometric assumption (\textbf{D2}) (which guarantees a maximum number of edges per each element), the definition of the local virtual space \eqref{local non uniform virtual space} and the distribution of the local degrees of accuracy \eqref{assumption mu},
it is straightforward to note that $\forall \E \in \taun$ the dimension of each local space $\VnE$ is proportional to $\pE^2$, with $\pE^2 \approx \ell^2$ for $\E \in L_{\ell}$.

Recalling again \eqref{assumption mu}, we can now compute a bound for the dimension of the local space, viz. the number of the degrees of freedom:
\[
N \lesssim \sum_{\el=0}^L \el^2 \le L \max_{\el=0}^L \el^2 = L^3,
\]
where we stress that we are using that in each layer there is a fixed maximum number of elements.
The result follows from Poincar\'e inequality.
\end{proof}

\section{Numerical results} \label{section numerical results}
We show here numerical experiments validating Theorem \ref{theorem exponential convergence}.
Let $u$, the solution of \eqref{continuous weak problem}, given by the classical benchmark
\begin{equation} \label{L shaped benchmark}
u(r,\theta)= r^{\frac{2}{3}} \sin \left(\frac{2}{3}\left( \theta +\frac{\pi}{2}\right) \right),
\end{equation}
on the $L$-shaped domain:
\begin{equation} \label{L shaped domain}
\Omega= [-1,1]^2 \setminus [-1,0]^2.
\end{equation}
\subsection{Tests on different meshes} \label{subsection tests on different meshes}
We consider sequences of the meshes depicted in Figure \ref{figure possible meshes on Lshaped}
and we consider two different choices for the degree of accuracy distribution $\mathbf{p}$.
As a first selection, we pick on all the elements a constant local degree of accuracy which is equal to the number of layers, i.e. $\mathbf p = (n+1,n+1,\dots,n+1)$.
As a second selection, we pick $\mathbf p_{\E}$ as in \eqref{assumption mu}, with $\mu=1$, $\mu$ being the parameter introduced for the construction of the vector of the degrees of accuracy.
In Figures \ref{figure energy error sigma 05}, \ref{figure energy error sigma sqrt2m1} and \ref{figure energy error sigma sqrt2m1s}, the numerical results are shown.

On the $y$-axis, we plot a log scale of the relative energy error between $u$, defined in \eqref{L shaped benchmark},
and the energy projection $\Pinabla_{\pE}$, defined in \eqref{pi nabla 1}, \eqref{pi nabla 2}, of the solution $\un$ of the discrete problem \eqref{discrete weak problem}, i.e.
\begin{equation} \label{computed VEM error}
\vert \u - \Pinabla_{\mathbf p} \un \vert_{1,n,\Omega} := \sqrt{\sum_{\E \in \taun} \vert \u - \Pinabla_{\pE} \un \vert_{1,\E}^2},
\end{equation}
On the other hand, in the $x$-axis we plot the cubic root of the number of the degrees of freedom of the relative virtual space.
The reason for choice \eqref{computed VEM error} is that it is not possile to compute the \emph{true} energy error since virtual functions are not known explicitely.

We consider the behaviour of the error with three different $\sigma$, grading parameter, namely $\sigma=\frac{1}{2}$, $\sqrt 2 -1$, $(\sqrt 2 -1)^2$ and we compare the three types of meshes.
\begin{figure}  [h]
\centering
\subfigure {\includegraphics [angle=0, width=0.49\textwidth]{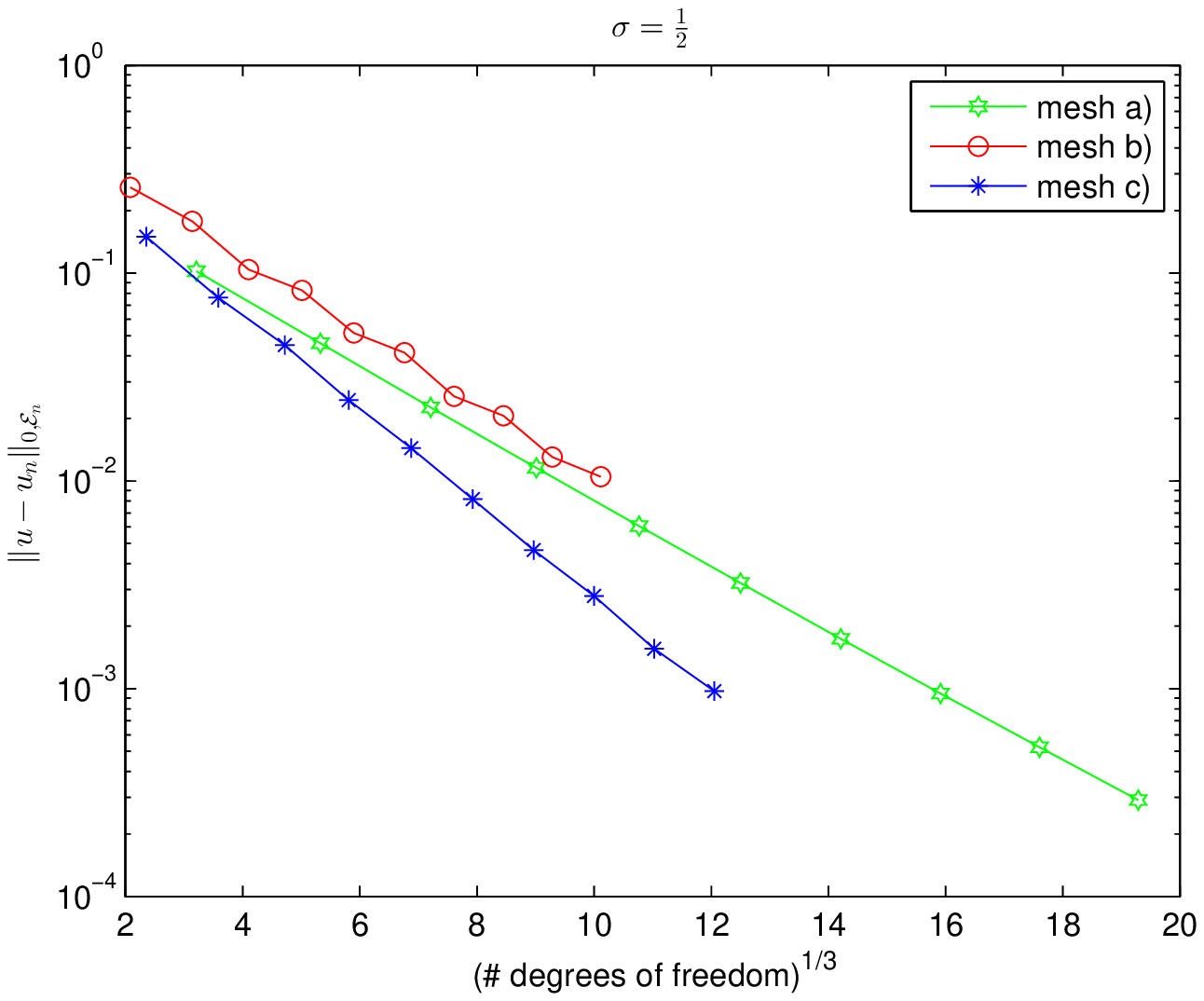}}
\subfigure {\includegraphics [angle=0, width=0.49\textwidth]{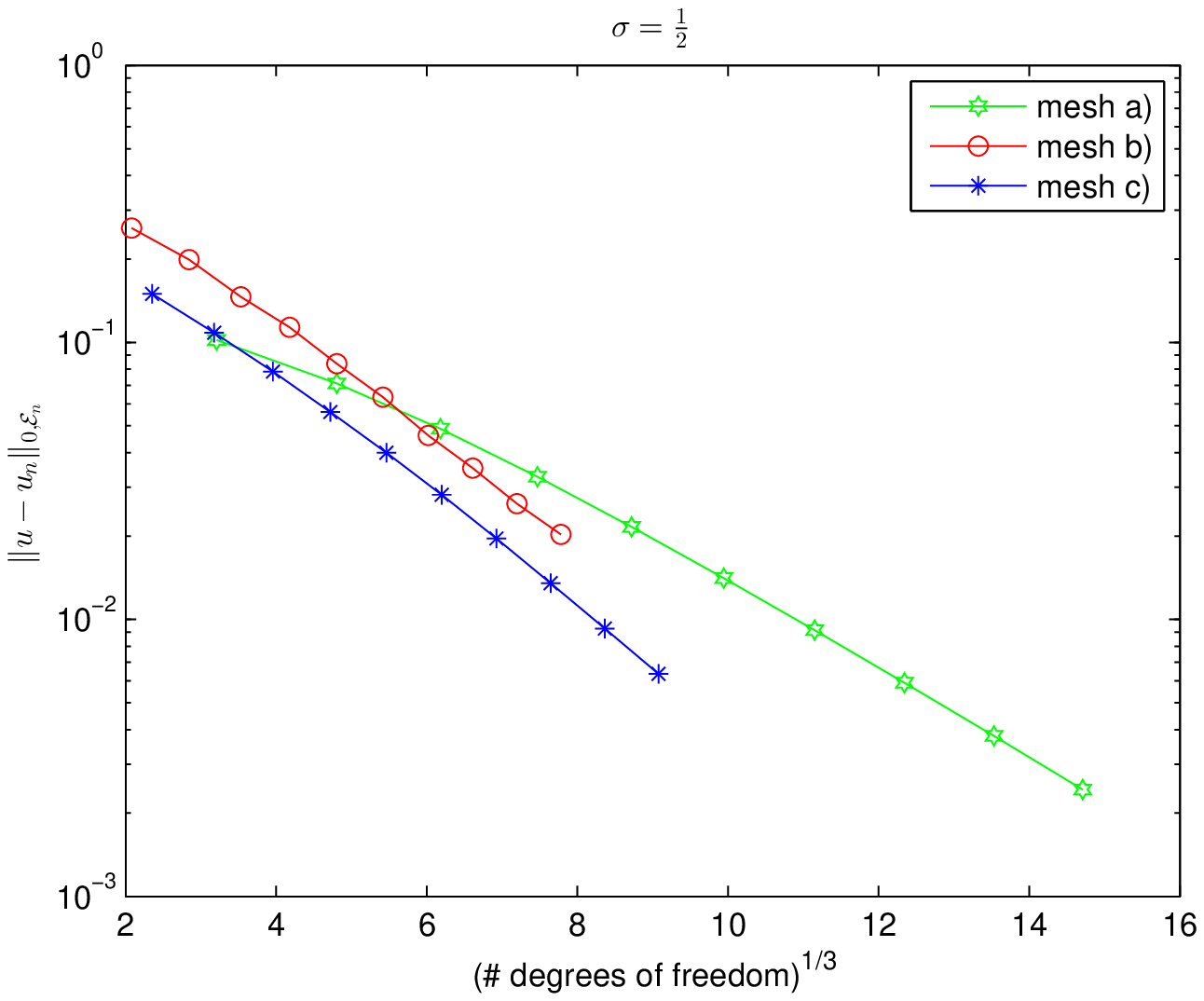}}
\caption{Error $\vert \u - \Pinabla_{\mathbf p} \un \vert_{1,n,\Omega}$ for the meshes in Figure \ref{figure possible meshes on Lshaped}, $\sigma=\frac{1}{2}$.
Left: the degree of accuracy is uniform and equal to the number of layers. Right: the degree of accuracy is varying over the mesh layers, $\mu=1$ in \eqref{assumption mu}.} \label{figure energy error sigma 05}
\end{figure}

\begin{figure}  [h]
\centering
\subfigure {\includegraphics [angle=0, width=0.49\textwidth]{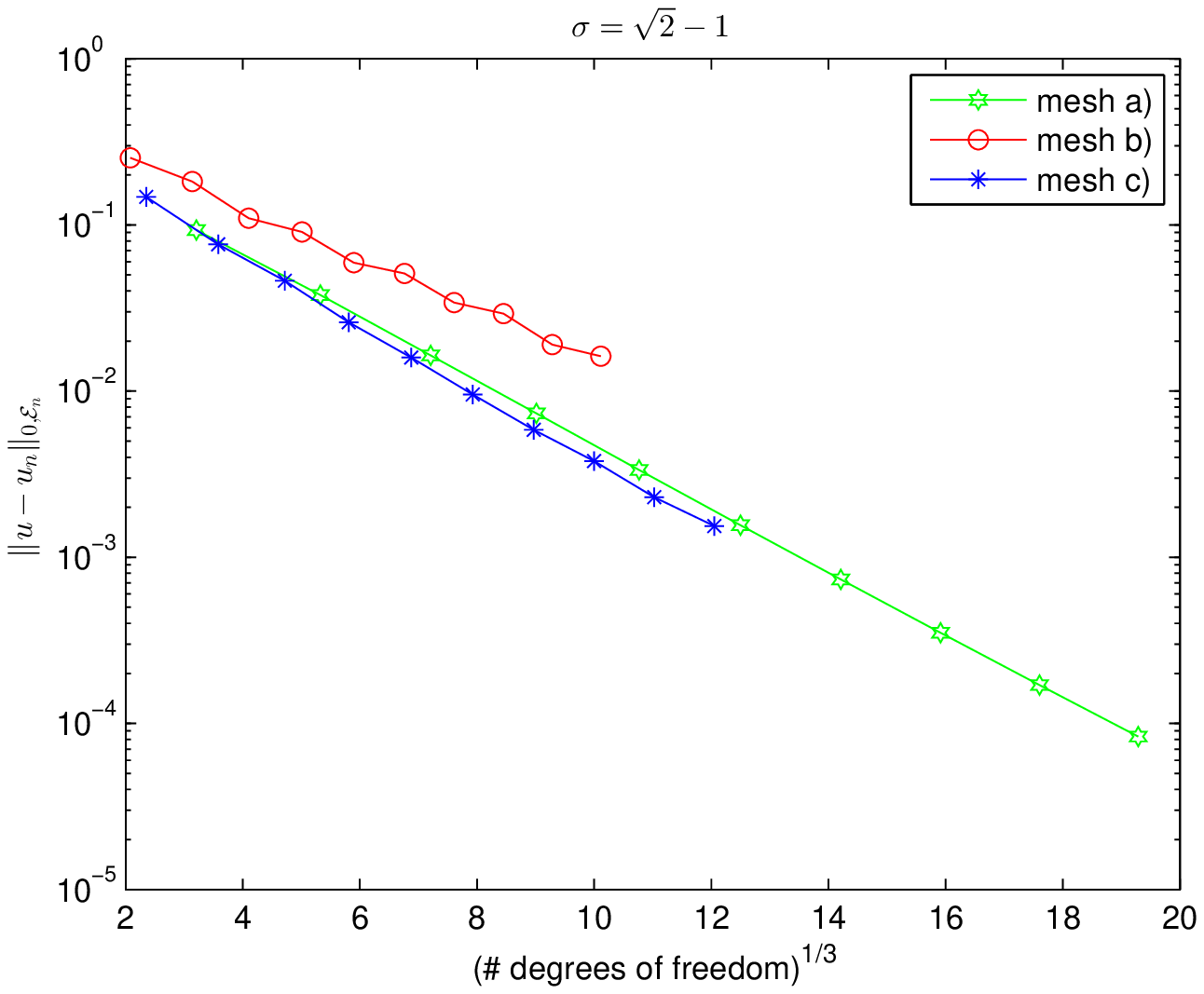}}
\subfigure {\includegraphics [angle=0, width=0.49\textwidth]{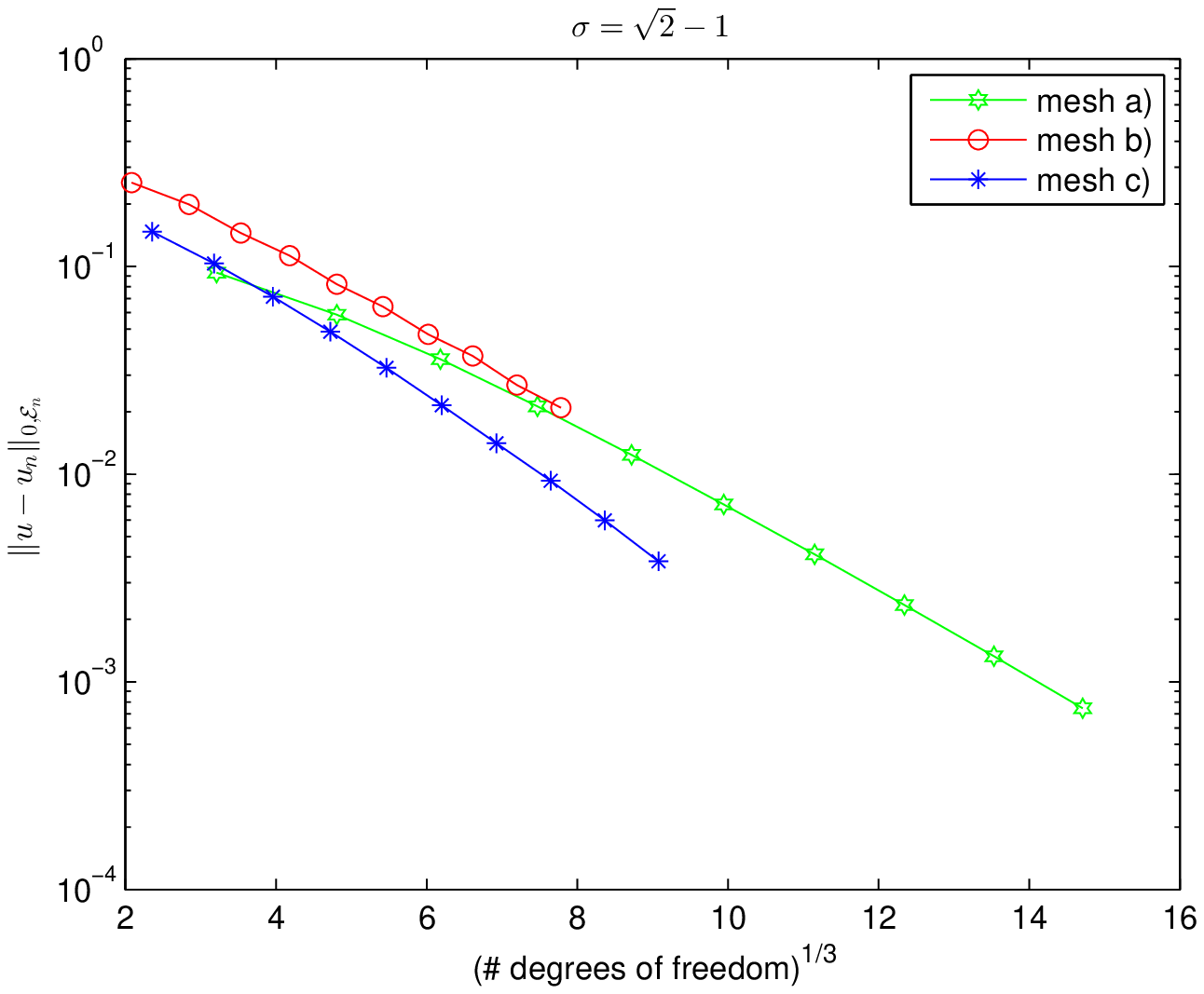}}
\caption{Error $\vert \u - \Pinabla_{\mathbf p} \un \vert_{1,n,\Omega}$ for the meshes in Figure \ref{figure possible meshes on Lshaped}, $\sigma=\sqrt 2 -1$.
Left: the degree of accuracy is uniform and equal to the number of layers. Right: the degree of accuracy is varying over the mesh layers, $\mu=1$ in \eqref{assumption mu}.} \label{figure energy error sigma sqrt2m1}
\end{figure}

\begin{figure}  [h]
\centering
\subfigure {\includegraphics [angle=0, width=0.49\textwidth]{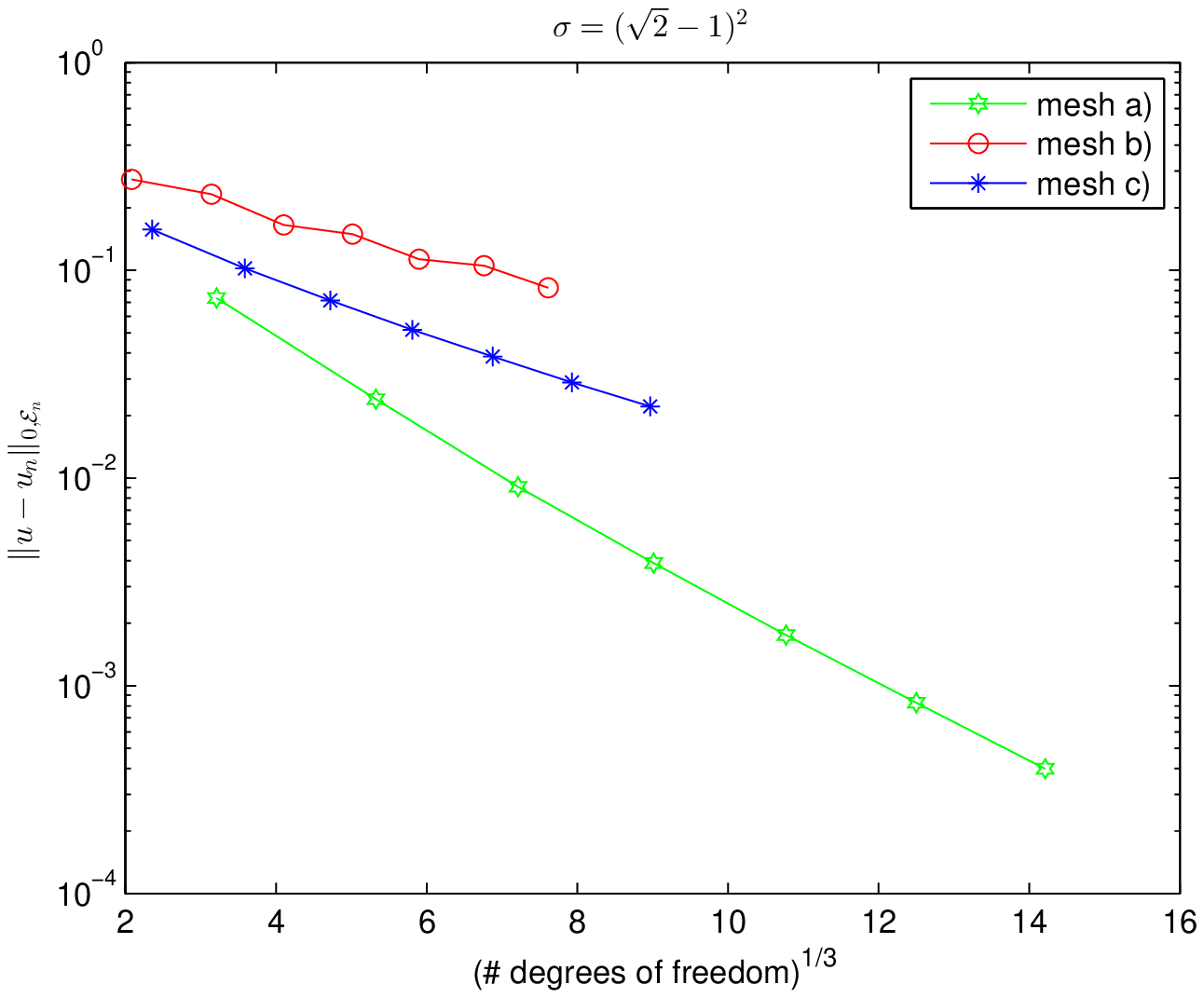}}
\subfigure {\includegraphics [angle=0, width=0.49\textwidth]{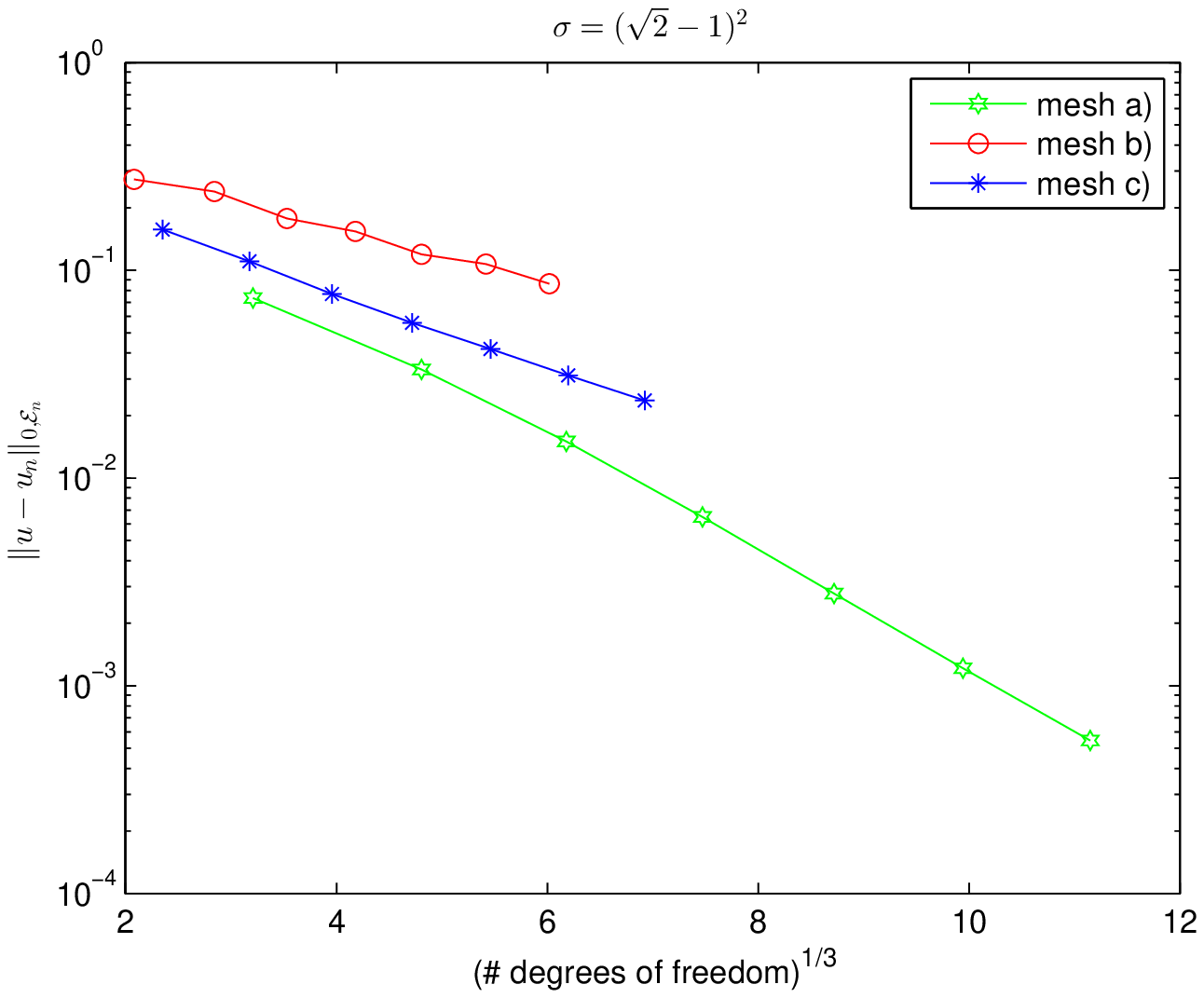}}
\caption{Error $\vert \u - \Pinabla_{\mathbf p} \un \vert_{1,n,\Omega}$ for the meshes in Figure \ref{figure possible meshes on Lshaped}, $\sigma=(\sqrt 2 -1)^2$.
Left: the degree of accuracy is uniform and equal to the number of layers. Right: the degree of accuracy is varying over the mesh layers, $\mu=1$ in \eqref{assumption mu}.} \label{figure energy error sigma sqrt2m1s}
\end{figure}

As mentioned previously, the sequence of meshes in Figure \ref{figure possible meshes on Lshaped} (center) does not satifsy assumptions (\textbf{D1}) and (\textbf{D4}).
Nevertheless, the expected exponential convergence rate is attained in all cases and for all geometric parameters $\sigma$.

\subsection{A comparison between $hp$ FEM and $hp$ VEM} \label{subsection a comparison between hp FEM and hp VEM}
We want now to show a comparison between the performances of $hp$ (quadrilateral) FEM and $hp$ VEM.
We stress that an analogous of Theorem \ref{theorem exponential convergence} holds for $hp$ FEM, see e.g. \cite{SchwabpandhpFEM}.
We consider again the benchmark with known solution \eqref{L shaped benchmark} and we consider the quadrilateral mesh in Figure \ref{figure quadmesh}. In the following we will denote such mesh with d)
whereas we denote with a), b) and c) the meshes depicted in Figure \ref{figure possible meshes on Lshaped} (left), (centre) and (right) respectively.
\begin{figure}  [h]
\centering
\includegraphics [angle=0, width=0.31\textwidth]{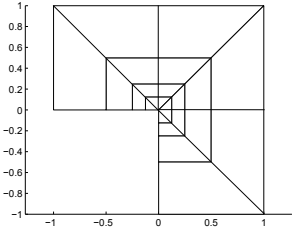}
\caption{Mesh used for the $hp$ FEM.} \label{figure quadmesh}
\end{figure}
In particular, we pick in both cases $\mathbf p_{\E}$ as in \eqref{assumption mu} $\forall \E \in \taun$, with $\mu=1$.
We discuss the case of sequences of meshes with grading parameter $\sigma$ equal to $\frac{1}{2}$, $\sqrt 2 -1$ and $(\sqrt 2 -1)^2$.

Since we cannot compute the \emph{true} energy error with the Virtual Element Method (it is \emph{not} computable since functions in the virtual space are not known explicitely),
in order to compare the two methods, we investigate the $L^2$ error on the skeleton $\mathcal E_n$ (it is computable in all cases a),$\dots$,d), since also the virtual functions are polynomials on $\mathcal E_n$), i.e.
\[
\Vert \u -\un \Vert _{0,\mathcal E _n}.
\]
The results are shown in Figure \ref{figure hp FEM vs hp VEM L2 skeleton}.
\begin{figure}  [h]
\centering
\subfigure {\includegraphics [angle=0, width=0.31\textwidth]{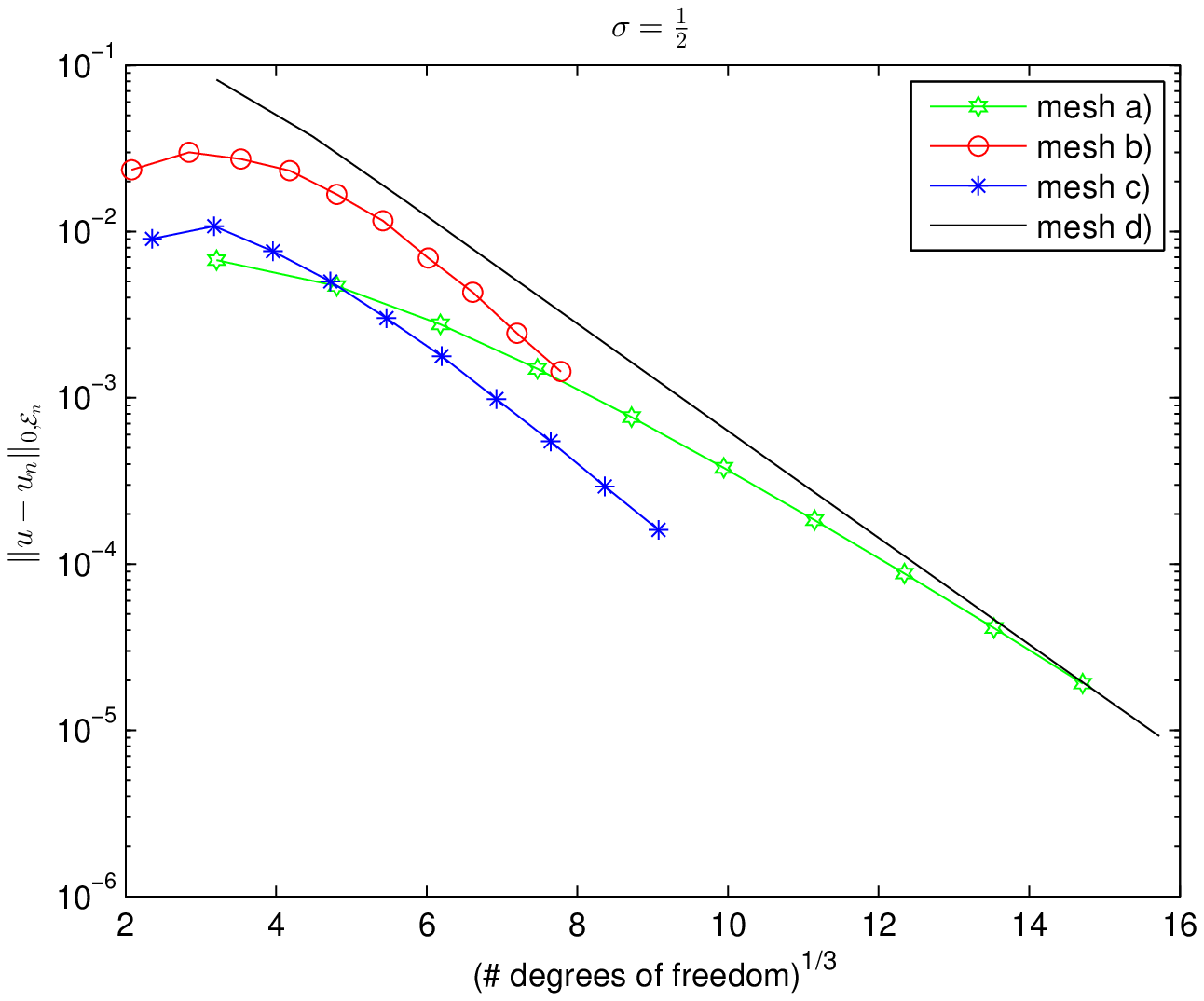}}
\subfigure {\includegraphics [angle=0, width=0.31\textwidth]{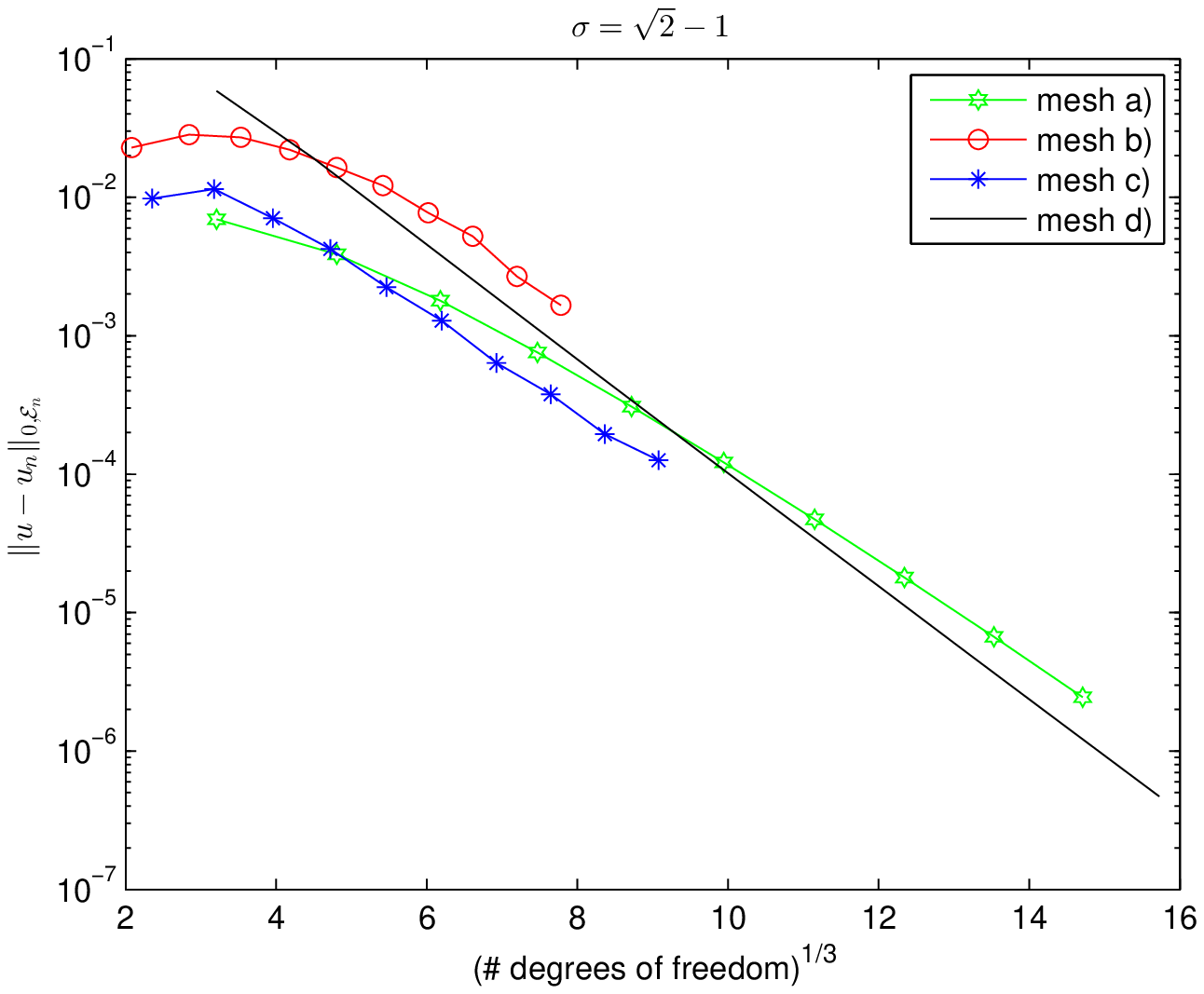}}
\subfigure {\includegraphics [angle=0, width=0.31\textwidth]{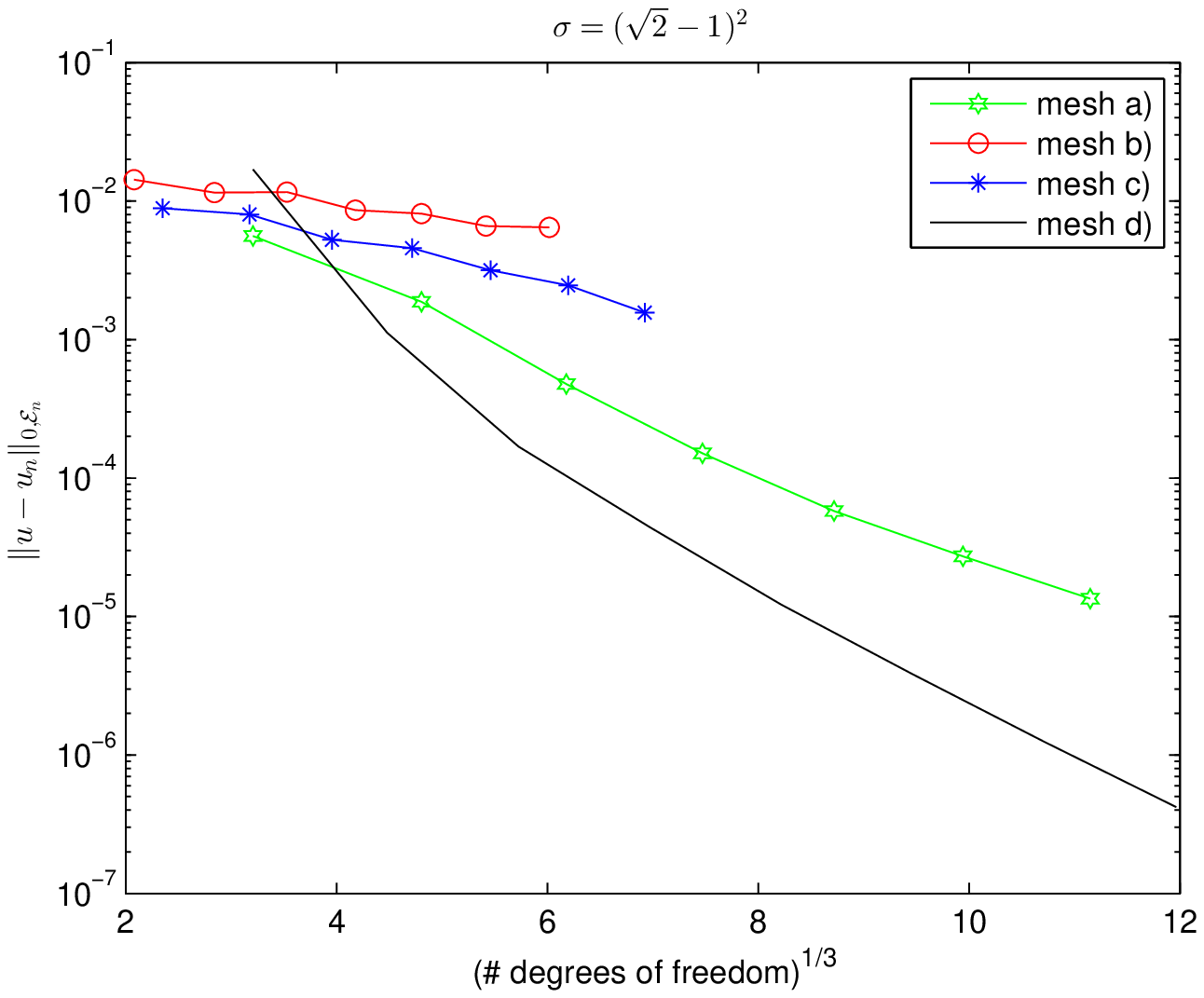}}
\caption{$hp$ FEM vs $hp$ FEM. $L^2$ error on the skeleton $\Vert \u - \un \Vert_{0,\mathcal E _n}$ for different sequence of meshes and different parameters $\sigma$.
Left: $\sigma=\frac{1}{2}$,  middle: $\sigma=\sqrt 2-1$, right: $\sigma=(\sqrt 2-1)^2$, linearly varying over the mesh layers degrees of accuracy ($\mu=1$ in \eqref{assumption mu}).} \label{figure hp FEM vs hp VEM L2 skeleton}
\end{figure}
It is possible to see that there is not a preferential choice; for instance, $hp$ VEM performs better than $hp$ FEM when $\sigma = \frac{1}{2}$, they perform almost the same when $\sigma = \sqrt 2-1$,
performs much worse when $\sigma = (\sqrt 2 -1)^2$.

In this sense, we can say that the two methods are comparable; nonetheless, the Virtual Element Methods leads to a huge flexibility in the choice of the domain meshing,
thus implying the possibility of constructing spaces with a minor number of degrees of freedom.\\

As a final remark, we observe that we could perform the same analysis in Section \ref{section exponential convergence for corner singularity on geometric meshes}
by modifying the definition of the local Virtual Spaces \eqref{local non uniform virtual space} into the \emph{serendipity} local Virtual Spaces
introduced in \cite{serendipityVEM}; this would additionally decrease the number of the degrees of freedom of the space, leading as a final output of the method to very small-sized linear systems.

\begin{appendices}
\section{}
 \label{section appendix A}
In this first appendix, we show the polynomial $hp$ inverse estimate \eqref{inverse estimate polynomials negative norm} and some technical background results.
We will use the properties of some particular Jacobi polynomials $\{ J_n^{\alpha,\beta} (x)\}_{n=0}^ {\infty}$, $\alpha,\,\beta\ge 0$, namely Legendre and shifted-ultraspherical polynomials. Henceforth, we denote with $\Ihat=[-1,1]$ the reference interval.

The following result was firstly presented in \cite{bernardi2001error}.
We stress that Lemma \ref{lemma 1D Jacobi expansion} holds for more general weights (i.e $-1 < \alpha \le \beta$), nonetheless
we discuss here only the case $0 \le \alpha \le \beta$ which is sufficient for our purpose.

\begin{lem} \label{lemma 1D Jacobi expansion}
Let $0 \le \alpha < \beta$. Then, $\forall \q \in \mathbb P_{\p}(\Ihat)$ with $\p\in \mathbb N$, it holds:
\begin{equation} \label{estimate weighted L2 1D}
\int_{\Ihat} (1-x^2)^{\alpha} \q(x)^2\dx  \le \c (\p+1)^{2(\beta-\alpha)} \int _{\Ihat} (1-x^2)^{\beta} \q(x)^2\dx,
\end{equation}
where $c$ is a positive contant depending on $\alpha$ and $\beta$, but not on $p$.
\end{lem}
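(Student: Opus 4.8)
The plan is to split $\Ihat$ at the natural polynomial length scale $(\p+1)^{-2}$ and treat the interior and the two endpoint layers separately, reducing the non-trivial content to a single weighted inverse inequality. Put $\delta=(\p+1)^{-2}$ and write $\Ihat=I_{\text{bulk}}\cup I_{\text{bl}}$ with $I_{\text{bulk}}=\{x:|x|\le 1-\delta\}$ and $I_{\text{bl}}=\{x:1-\delta<|x|<1\}$; the case $\p=0$ is immediate (the ratio of the two Beta-type integrals is a constant depending only on $\alpha,\beta$), so assume $\p\ge 1$.

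On $I_{\text{bulk}}$ one has $1-x^2\ge 1-(1-\delta)^2\ge\delta$, hence $(1-x^2)^{\alpha-\beta}\le\delta^{-(\beta-\alpha)}=(\p+1)^{2(\beta-\alpha)}$, so
\[
\int_{I_{\text{bulk}}}(1-x^2)^{\alpha}\q^2\dx
=\int_{I_{\text{bulk}}}(1-x^2)^{\alpha-\beta}(1-x^2)^{\beta}\q^2\dx
\le(\p+1)^{2(\beta-\alpha)}\int_{\Ihat}(1-x^2)^{\beta}\q^2\dx .
\]
On $I_{\text{bl}}$ one has $1-x^2<2\delta$, $|I_{\text{bl}}|=2\delta$, and $\alpha\ge 0$, so $\int_{I_{\text{bl}}}(1-x^2)^{\alpha}\q^2\dx\le 2^{\alpha+1}(\p+1)^{-2\alpha-2}\,\Vert\q\Vert_{L^\infty(\Ihat)}^2$. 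To close the estimate I would invoke the classical weighted Nikolskii (inverse) inequality for the Jacobi weight $(1-x^2)^{\beta}$,
\[
\Vert\q\Vert_{L^\infty(\Ihat)}^2\lesssim(\p+1)^{2\beta+1}\int_{\Ihat}(1-x^2)^{\beta}\q^2\dx ,\qquad \forall\,\q\in\mathbb P_{\p}(\Ihat),
\]
which is sharp and attained up to constants by the Gegenbauer polynomial $C_\p^{\beta+1/2}$ at the endpoint (compute $C_n^{\beta+1/2}(1)\sim c\,n^{2\beta}$ and the known $L^2_{(1-x^2)^\beta}$-norm $\sim c\,n^{2\beta-1}$; the general upper bound for all $\q\in\mathbb P_\p$ follows from a Christoffel-function argument). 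Plugging this in gives $\int_{I_{\text{bl}}}(1-x^2)^{\alpha}\q^2\dx\lesssim(\p+1)^{2\beta-2\alpha-1}\int_{\Ihat}(1-x^2)^{\beta}\q^2\dx\le(\p+1)^{2(\beta-\alpha)}\int_{\Ihat}(1-x^2)^{\beta}\q^2\dx$ since $\p+1\ge 1$. Adding the bulk and boundary-layer contributions yields \eqref{estimate weighted L2 1D}.

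The only genuine obstacle is the weighted Nikolskii inequality with the exact power $(\p+1)^{2\beta+1}$; everything else is elementary bookkeeping on subintervals, and that inequality is essentially the content of the computation in \cite{bernardi2001error}. An equivalent, more ``special-functions'' route — closer to what the appendix announces and sidestepping the isolation of a Nikolskii bound — is to expand $\q$ in the ultraspherical basis orthogonal for $(1-x^2)^{\beta}$, re-expand each basis element in the ultraspherical basis orthogonal for $(1-x^2)^{\alpha}$ via Gegenbauer's connection formula (all coefficients positive since $\beta>\alpha$), and bound the resulting double sum by Cauchy--Schwarz; there the difficulty migrates to the Gamma-function asymptotics of the connection coefficients and of the $L^2$ normalizations, out of which the factor $(\p+1)^{2(\beta-\alpha)}$ must be extracted.
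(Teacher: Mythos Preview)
Your argument is correct and takes a genuinely different route from the paper's. The paper works entirely through orthogonal-polynomial machinery: it expands $\q$ in the basis $\{(J_n^{\alpha})'\}$ of derivatives of ultraspherical polynomials, uses the Sturm--Liouville equation and the normalization \eqref{orthogonality Jacobi} to show $\int_{\Ihat}(J_n^{\alpha})'^{2}\rho_{\alpha+1}\approx n$, proves by induction on $n$ (via the three-term relation linking $J_n^{\alpha}$ to $(J_{n\pm1}^{\alpha})'$) that $\int_{\Ihat}(J_n^{\alpha})'^{2}\rho_{\alpha}\lesssim n^{2}$, and then combines these via Cauchy--Schwarz for sequences to obtain first the case $\beta-\alpha=1$; integer steps are iterated and intermediate exponents are reached by H\"older. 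Your decomposition at the polynomial length scale $\delta=(\p+1)^{-2}$, with pointwise weight comparison on the bulk and a weighted Nikolskii bound $\Vert\q\Vert_{\infty}^{2}\lesssim(\p+1)^{2\beta+1}\Vert\q\Vert_{L^{2}_{\rho_{\beta}}}^{2}$ on the boundary layer, is more direct and yields all $\beta>\alpha\ge 0$ in one stroke; the price is that the Nikolskii inequality with the sharp exponent must be imported (Christoffel-function asymptotics for the Jacobi weight), whereas the paper's proof is self-contained from elementary Jacobi identities. Note also that your suggested alternative via Gegenbauer connection coefficients between the $\rho_{\alpha}$- and $\rho_{\beta}$-orthogonal bases is \emph{not} what the paper does either: the paper never changes ultraspherical parameter but stays with the single family $J_n^{\alpha}$ and exploits its Sturm--Liouville structure.
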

\begin{proof}

We split the proof into three parts. The first two are results dealing with \ultraspherical  polynomials properties, while in the last one we show the assertion.

For the properties of \ultraspherical polynomials we refer to \cite{SchwabpandhpFEM, GhizzettiOssiciniquadratureformulae, shentangwangspectralmethods, bernardi1997maday, szego_orthogonal, AbramowitzStegun_handbook}.
We recall various facts that we will use throughout the proof about these polynomials.
\begin{itemize}
\item[*] The $n$-th \ultraspherical polynomial $\Jalphan$, $\alpha \ge 0$, is the $n$-th Jacobi polynomial $J_n^{\alpha,\beta}$ with $\alpha=\beta\ge 0$;
the sequence $\{\Jalphan\}_{n=0}^{+\infty}$ forms an orthogonal (but not normal) basis for the weighted Lebesgue space:
\[
L_{\rhoalpha}(\Ihat) := \left\{ u \text{ Lebesgue-measurable on } \Ihat \mid \int_{\Ihat} \rhoalpha(x) \vert u(x)\vert ^2 \dx < +\infty   \right\},
\]
where $\rhoalpha$ is the weighted 1D function $\rhoalpha (x)=(1-x^2)^{\alpha}$.
\item[*] Each $\Jalphan$ is the $n$-th eigenfunction of the Sturm-Liouville problem:
\begin{equation} \label{ultraspherical equation}
(\rho_{\alpha+1}(x) \Jalphan(x)')' + n (n+2\alpha+1) \rhoalpha(x)\Jalphan(x)=0,\quad \forall x\in \Ihat,
\end{equation}
with appropriate Dirichlet conditions at the endpoints of $\Ihat$:
\[
\Jalphan (\pm 1) = (-1)^n {{n+\alpha} \choose {n}}.
\]
\item[*] The following orthogonality relation holds for $n \ge 1$, see e.g. \cite[formula (4.3.3)]{szego_orthogonal}:
\begin{equation} \label{orthogonality Jacobi}
\int_{-1}^1 \Jalphan (x) J_{m}^{\alpha}(x) \rhoalpha(x)\dx = \delta_{n,m} \frac{2^{2\alpha+1}\Gamma(n+\alpha+1)^2}{(2n+2\alpha+1)n!\Gamma(n+2\alpha+1)},
\end{equation}
where $\delta_{n,m}$ is the Kronecker delta and $\Gamma$ is the Gamma function.
\item[*] The following asymptotic behaviour of the Gamma function holds:
\begin{equation} \label{Gamma behaviour}
\Gamma(t)= \sqrt{2\pi} e^{-t} t^{t-\frac{1}{2}} \left( 1+ O(1/t)    \right),\quad \text{for }t \rightarrow +\infty.
\end{equation}
\item[*] The following relation between \ultraspherical polynomials and their derivatives holds, see \cite[Theorem 19.3]{bernardi1997maday}:
\begin{equation} \label{link Jacobi derivatives}
(2n+2\alpha+1) \Jalphan(x)= \frac{n+2\alpha+1}{n+\alpha+1} J_{n+1}^\alpha (x)'  - \frac{n+\alpha}{n+2\alpha} J_{n-1}^\alpha (x)' .
\end{equation}
\end{itemize}
We start the proof of the theorem.
As a last comment, the details of steps 1 and 2 are carried out here (although they are already known),while step 3 is a detailed version of \cite[Theorem 19.3]{bernardi2001error}.\\
\textbf{1st STEP}
We want to show here:
\begin{equation} \label{first fact}
c_1 n \le \int_{\Ihat} (\Jalphan(x)')^2 \rho_{\alpha+1}(x)\dx \le c_2 n,
\end{equation}
where $c_1$ and $c_2$ are two positive constants independent on $n$, but depending on $\alpha$.

For the purpose,  we observe that \eqref{ultraspherical equation} and an integration by parts imply:
\begin{equation} \label{integrated version ultraspherical equation}
\int_{\Ihat} (\Jalphan (x)')^2 \rho_{\alpha+1}(x)\dx = n(n+2\alpha+1) \int_{\Ihat} (\Jalphan (x))^2 \rho_{\alpha}(x)\dx.
\end{equation}
We stress that one could also show \eqref{integrated version ultraspherical equation} by combining \eqref{orthogonality Jacobi} with \cite[formula (4.21.7)]{szego_orthogonal}.

Next, we estimate $\int_{\Ihat} (\Jalphan (x))^2 \rho_{\alpha}(x)\dx$.
We set for the purpose:
\[
2^{-2\alpha-1} (2n + 2\alpha + 1) \int_{\Ihat} \Jalphan (x)^2 \rho_{\alpha}(x) \dx =: g(n,\alpha) = \frac{\Gamma (n+\alpha+1)^2}{\Gamma (n+1)  \Gamma (n + 2\alpha+1)}.
\]
Function $g(n,\alpha)$ is increasing in $n$ for every fixed $\alpha > -1$. Besides, $\lim _{n \rightarrow +\infty} g(n,\alpha) =1$.
Thus:
\[
g(1,\alpha) \le 2^{-2\alpha - 1 }(2n+2\alpha + 1) \int_{\Ihat} \Jalphan (x)^2 \rho_{\alpha}(x) \dx \le 1,
\]
which implies:
\begin{equation} \label{conti qua sopra}
\frac{\widetilde c_1}{n} \le \int_{\Ihat} \Jalphan(x)^2 \rho_{\alpha}(x)\dx \le \frac{\widetilde c_2}{n},\quad \quad n \ge 1,
\end{equation}
where $\widetilde c_1$ and $\widetilde c_2$  are two positive constants independent on $n$, but dependent on $\alpha$.
Using that $\alpha \ge$, an explicit representation for the two constants $\widetilde c_1$ and $\widetilde c_2$ is given by:
\begin{equation} \label{choice ctilde}
\widetilde c_1 = \frac{2^{2\alpha+1}}{2\alpha +3} \frac{\Gamma (2+\alpha)^2}{\Gamma (2) \Gamma (2+2\alpha)};\quad\quad\quad\quad \widetilde c_2 = 2^{2\alpha}.
\end{equation}
The claim, i.e. \eqref{first fact}, follows combining \eqref{conti qua sopra} and \eqref{integrated version ultraspherical equation}.
An explicit choice for the two constants $c_1$ and $c_2$ in \eqref{integrated version ultraspherical equation} is given by:
\begin{equation} \label{choice c12}
c_1 = \widetilde c_1 = \frac{2^{2\alpha+1}}{2\alpha +3} \frac{\Gamma (2+\alpha)^2}{\Gamma (2) \Gamma (2+2\alpha)};\quad\quad\quad\quad  c_2 = (2\alpha+2) 2^ {2\alpha}.
\end{equation}
\textbf{2nd STEP}
We show secondly the following bound:
\begin{equation} \label{second fact}
\int_{\Ihat} (\Jalphan(x)')^2 \rhoalpha(x) \dx \le b n^2,
\end{equation}
for some positive constant $b$ independent on $n$ but depending on $\alpha$.
We will prove this fact by induction. The cases $n=1$, $2$ are obvious since we have positive left and right-hand sides. Assume then that \eqref{second fact} holds up to $n$ and we show the inequality for $n+1$.

We observe that the following inequalities involving the coefficients in \eqref{link Jacobi derivatives} are valid (we recall that $\alpha,\beta \ge 0$):
\begin{equation} \label{bounds on coefficients}
2n \le 2n + 2\alpha + 1 \le (2\alpha+3) n, \quad \quad 1\le \frac{n+2\alpha + 1}{n + \alpha + 1} \le 2,\quad \quad \frac{1}{2} \le \frac{n+\alpha}{n+2\alpha} \le 1.
\end{equation}
Then, using \eqref{orthogonality Jacobi}, \eqref{link Jacobi derivatives} and \eqref{bounds on coefficients}, we get:
\begin{equation} \label{long estimate}
\begin{split}
&\int_{\Ihat} (J_{n+1}^\alpha(x)')^2 \rho_{\alpha} (x) \dx \le \int_{\Ihat} \left( \left( \frac{n+2\alpha+1}{n+\alpha+1} \right)  J_{n+1}^\alpha(x)' \right)^2 \rho_\alpha(x) \dx \\
& = \int_{\Ihat} \left( (2n+2\alpha+1) \Jalphan (x)  \right)^2\rho_\alpha(x) \dx + \int _{\Ihat} \left(  \frac{n+\alpha}{n+2\alpha} J_{n-1}^{\alpha}(x)'  \right) ^2  \rho_{\alpha}(x) \dx\\
& \le \underbrace{(2\alpha+3) ^2}_{=:c_\alpha} n^2 \int_{\Ihat} \Jalphan(x)^2\rho_\alpha(x) \dx + \int_{\Ihat} \left( J_{n-1}^\alpha(x)'  \right)^2 \rho_{\alpha} \dx.
\end{split}
\end{equation}
We apply \eqref{conti qua sopra} and the induction hypotesis to the first and second term in the right-hand side of \eqref{long estimate} respectively, obtaining:
\[
\int_{\Ihat} (J_{n+1}^{\alpha} (x)')^2 \rho_{\alpha}(x) \dx \le c_\alpha \widetilde c_2 n + b (n-1)^2 =: \widetilde c n + b(n-1)^2,
\]
where $\widetilde c_2$ is defined in \eqref{choice ctilde}.
Taking $b$ large enough, for instance $b \ge \frac{\widetilde c}{4}$, the following holds:
\begin{equation} \label{stupid bound}
\widetilde c n+  b(n-1)^2 \le b (n+1)^2.
\end{equation}
We point out that we have to take power 2 in the right-hand side of \eqref{second fact} because with smaller powers \eqref{stupid bound} would not be true.\\
\textbf{3rd STEP}
We show \eqref{estimate weighted L2 1D}.
Let $\q \in \mathbb P_p(\Ihat)$. We expand it into a derivated Jacobi sum:
\[
\q(x)=\sum_{n=1}^{\p+1} a_n \Jalphan(x)'.
\]
Then, noting from \eqref{ultraspherical equation} and an integration by parts implies that the \ultraspherical polynomials $J_{n}^{\alpha+1}$ are $L^2$ orthogonal with respect to the weight $\rho_{\alpha+1}$, we have:
\begin{equation} \label{step 1 expanded}
\int_{\Ihat} \q(x)^2 \rho_{\alpha+1}(x) \dx  = \sum_{n=1}^{\p+1} a_n^2 \int _{\Ihat} (\Jalphan(x)')^2 \rho_{\alpha+1} (x)\dx \ge c_1 \sum_{n=1}^{\p+1} a_n^2 n,
\end{equation}
where $c_1$ is defined in \eqref{choice c12}.
On the other hand, \eqref{second fact} implies:
\begin{equation} \label{step 2 expanded}
\int_{\Ihat} \q(x)^2 \rhoalpha(x)\dx \le  \left( \sum_{n=1}^{\p+1} |a_n| \left( \int_{\Ihat} (\Jalphan(x)')^2 \rhoalpha(x) \right)^{\frac{1}{2}}  \right)^2 \le b \left( \sum_{n=1}^{\p+1} |a_n|n  \right)^2,
\end{equation}
where $b$ is introduced in \eqref{second fact}.
Combining \eqref{step 1 expanded} with \eqref{step 2 expanded} and using a Cauchy-Schwarz inequality for sequences, lead to:
\begin{equation} \label{result differenza 1}
\int_{\Ihat} \q(x)^2 \rhoalpha(x) \dx \le b \left(\sum_{n=1}^{\p+1} a_n^2 n\right) \left(\sum_{n=1}^{\p+1}n\right)\le b c_1^{-1} (\p+1)^2 \int _{\Ihat} \q(x)^2 \rho_{\alpha+1}(x)\dx.
\end{equation}
This is in fact the thesis when $\beta-\alpha=1$. The case $\beta-\alpha\in \mathbb N$ is straightforward; it suffices in fact to iterate enough time the above computations.

Assume now $\alpha\in (\beta-1,\beta)$. Then:
\[
\alpha= \frac{\beta-1}{r}+\frac{\beta}{s},\quad\text{ with } \frac{1}{r}+ \frac{1}{s}=1\quad \text{and }\frac{1}{r}=\beta-\alpha<1.
\]
In order to conclude, using an Holder inequality and \eqref{result differenza 1}:
\[
\begin{split}
\int_{\Ihat} \q(x)^2 \rhoalpha (x)\dx 	&=\int _{\Ihat} \q(x)^{\frac{2}{r}} \rho_{\frac{\beta-1}{r}}(x) \q(x)^{\frac{2}{s}}\rho_{\frac{\beta}{s}}(x) \dx \le \left( \int_{\Ihat} \q(x)^2 \rho_{\beta-1}(x)\dx\right)^{\frac{1}{r}} \left(\int_{\Ihat} \q(x)^2\rho_{\beta}(x)\dx \right)^{\frac{1}{s}}\\
					&\le (b c_1^{-1})^{\frac{1}{r}} (\p+1)^{\frac{2}{r}}   \left( \int _{\Ihat} \q(x)^2\rho_{\beta}(x)\dx \right)^{\frac{1}{r}+\frac{1}{s}} = (b c_1^{-1})^{\frac{1}{r}} (\p+1)^{2(\beta-\alpha)} \int_{\Ihat} \q(x)^2 \rho_{\beta}(x)\dx.
\end{split}
\]

We point out that in order to prove the case $\alpha\in (\beta-1,\beta)$ one could also use interpolation theory, see \cite{triebel,tartar}.
Nonetheless, we believe that a direct computation is easily readable.
\end{proof}
The following lemma is a quasi-one dimensional result on trapezoids. The idea is pretty similar to that in \cite[Lemma D.3]{melenk2003hp}, although our result employs a different class of weight functions.
We also point out that Lemma \ref{lemma trapezoid} can be generalized to the case $-1 < \alpha < \beta$, see \cite{melenk2003hp}.
\begin{lem} \label{lemma trapezoid}
Let $d\in (0,1)$. Let $a$, $b\in \mathbb R$ such that $-1+ad< 1+bd$. We set the $(a,b,d)$-trapezoid as:
\[
D(a,b,d)=D=\{(x,y)\in \mathbb R^2 \mid y\in [0,d],\; -1+ay\le x \le 1+by\}.
\]
We associate to each $y^*$ the segment:
\begin{equation} \label{interval star}
I(y^*)=I^*=[-1+a y^*, 1+ b y^*].
\end{equation}
For every $\Phi \in \mathcal C^0(\overline D)$ such that:
\begin{equation} \label{requirements on cubic bubble trapezoidal}
\Phi(\cdot, y^*) \in \mathbb P_3(I^*) \text{ is concave};\; \Phi (x,y^*) \ge 0,\; \forall x \in I^* ;\; \Phi=0 \text{ only at the endpoints of } I^*,\;\forall y^* \in [0,d],\;
\end{equation}
the following quasi-one dimensional $p$ inverse estimate holds:
\[
\int_{D} \Phi^{\alpha}(x,y) \q(x,y)^2 \dx\dy\le \c (\p+1)^{2(\beta-\alpha)} \int _{D} \Phi^{\beta}(x,y)\q(x,y)^2\dx\dy, \quad \forall \q \in \mathbb P_p(D),
\]
where $\c$ is a positive constant depending only on $\alpha$ and $\beta$, but not on $p$, and where $\beta>\alpha\ge 0$.
\end{lem}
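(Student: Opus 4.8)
The plan is to reduce the two–dimensional estimate to the one–dimensional Lemma~\ref{lemma 1D Jacobi expansion} by a slicing argument, in the spirit of \cite[Lemma D.3]{melenk2003hp}. First I would foliate $D$ by the horizontal segments $I^*=I(y^*)$, $y^*\in[0,d]$, introduced in \eqref{interval star}, and write, for $\gamma\in\{\alpha,\beta\}$, by Fubini's theorem
\[
\int_D \Phi^{\gamma}(x,y)\,\q(x,y)^2\dx\dy=\int_0^d\Big(\int_{I^*}\Phi(x,y^*)^{\gamma}\,\q(x,y^*)^2\dx\Big)\dy^* .
\]
For each fixed $y^*$ the restriction $\q(\cdot,y^*)$ lies in $\mathbb P_{\p}(I^*)$, so it suffices to prove the one–dimensional inequality on each slice, with a constant independent of $y^*$, and then integrate in $y^*$. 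Mapping $I^*=[-1+ay^*,\,1+by^*]$ affinely onto $\Ihat=[-1,1]$ introduces Jacobian factors that cancel between the two sides up to a ratio bounded uniformly in $y^*$, because the length $2+(b-a)y^*$ of $I^*$ is an affine function of $y^*$ that is positive at the two endpoints $y^*=0$ and $y^*=d$ of its range (the latter being precisely the non-degeneracy hypothesis $-1+ad<1+bd$), hence bounded above and away from $0$ on $[0,d]$.

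The core step is the treatment of the weight. After the affine map, the slice of $\Phi$ becomes a cubic $\widehat\Phi\in\mathbb P_3(\Ihat)$ that is concave, nonnegative on $\Ihat$, and vanishes exactly at $\pm1$; hence $\widehat\Phi(\xi)=(1-\xi^2)(\widehat\gamma\,\xi+\widehat\delta)$. Nonnegativity forces $\widehat\delta\ge|\widehat\gamma|$, and imposing $\widehat\Phi''(\xi)=-2\widehat\delta-6\widehat\gamma\,\xi\le0$ on $\Ihat$ forces the stronger $\widehat\delta\ge 3|\widehat\gamma|$; consequently $\tfrac{2}{3}\widehat\delta\le \widehat\gamma\,\xi+\widehat\delta\le\tfrac{4}{3}\widehat\delta$ on $\Ihat$, i.e.
\[
\tfrac{2}{3}\,\widehat\delta\,(1-\xi^2)\ \le\ \widehat\Phi(\xi)\ \le\ \tfrac{4}{3}\,\widehat\delta\,(1-\xi^2),\qquad \xi\in\Ihat .
\]
Here $\widehat\delta=\widehat\Phi(0)$ is the value of $\Phi$ at the midpoint of $I^*$, a strictly positive continuous function of $y^*$, hence uniformly bounded between two positive constants on the compact interval $[0,d]$. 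Thus $\widehat\Phi^{\alpha}$ is comparable to $(1-\xi^2)^{\alpha}$ and $\widehat\Phi^{\beta}$ to $(1-\xi^2)^{\beta}$ with constants independent of $\p$ and $y^*$.

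Finally I would apply Lemma~\ref{lemma 1D Jacobi expansion} on $\Ihat$ to $\widehat\q:=\q(\cdot,y^*)$ pulled back to $\Ihat$, with the exponents $\alpha$ and $\beta$, obtaining
\[
\int_{\Ihat}(1-\xi^2)^{\alpha}\,\widehat\q(\xi)^2\,d\xi\ \le\ \c\,(\p+1)^{2(\beta-\alpha)}\int_{\Ihat}(1-\xi^2)^{\beta}\,\widehat\q(\xi)^2\,d\xi ,
\]
then chain this with the two–sided weight comparison of the previous paragraph and the uniformly bounded Jacobian ratio, and integrate over $y^*\in[0,d]$ to recover the claim. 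The hard part will be the weight comparison step: one must show that concavity, together with positivity and the prescribed zero set of the cubic $\widehat\Phi$, pins it — on every slice simultaneously and with $\p$-independent constants — between constant multiples of the canonical bubble $1-\xi^2$; once this is in place, everything else is Fubini, an affine change of variables, and a direct invocation of the one–dimensional Jacobi estimate of Lemma~\ref{lemma 1D Jacobi expansion}.
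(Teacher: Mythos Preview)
Your proof is correct and follows the same slicing strategy as the paper: foliate $D$ by the segments $I^*$, establish a pointwise two-sided comparison of $\Phi(\cdot,y^*)$ with the pulled-back bubble $(1-\xi^2)$, apply Lemma~\ref{lemma 1D Jacobi expansion} on each slice, and integrate in $y^*$. Your weight comparison is in fact more explicit than the paper's --- you extract the quantitative bounds $\tfrac{2}{3}\widehat\delta(1-\xi^2)\le\widehat\Phi(\xi)\le\tfrac{4}{3}\widehat\delta(1-\xi^2)$ directly from the concavity condition $\widehat\delta\ge 3|\widehat\gamma|$, whereas the paper simply asserts that any two positive concave polynomials vanishing only at the endpoints are pointwise comparable and then invokes continuity in $y^*$ and compactness of $[0,d]$ to make the constants uniform.
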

\begin{proof}
\textbf{1st STEP}$\,$ 
Let $\psi(x)=(1-x^2)$ be the 1D bubble function associated to the reference interval $\Ihat:=[-1,1]$.
Given $y^*\in[0,d]$, we set $F$ the affine function mapping $\Ihat$ in $I^*$.
Let $\psi ^*(x) = \psi(F^{-1}(x)): I^* \rightarrow \mathbb R$.

Then, there exist two positive constants $c_1$ and $c_2$ depending only on $a$, $b$ and $y^*$ such that
\begin{equation} \label{puntual equivalence with dependence}
c_1 \psi^*(x) \le \Phi (x,y^*) \le c_2 \psi^*(x),\quad \forall x\in I^*.
\end{equation}
This follows from the fact that both $\psi^*(\cdot)$ and $\Phi(\cdot, y^*)$ are two positive quadratic/cubic concave polynomials annihilating only at the endpoints of the segment for all $y^* \in [0,d]$, see \eqref{requirements on cubic bubble trapezoidal}.

Since $\Phi$ is by hypotesis a continuous function in $y^*$, then $c_1$ and $c_2$ depend continuously on $y^*$.
Having that $y^*$ lives in the compact set $[0,d]$, then $c_1$ and $c_2$ attain maximum and minimum respectively.
Further, such extremal points are strictly positive due to the positiveness of $c_1$ and $c_2$ seen as functions of $y^*$, see \eqref{requirements on cubic bubble trapezoidal}.
Therefore, we can write:
\begin{equation} \label{puntual equivalence without dependence}
\overline{c}_1 \psi^*(x) \le \Phi(x,y^*) \le \overline{c}_2 \psi^*(x),\quad \forall x \in I^*,
\end{equation}
where $ 0 < \overline{c}_1=\min_{y^*\in [0,d]}(c_1(y^*))$ and $\overline c _1 \le\overline{c}_2=\max_{y^*\in [0,d]}(c_2(y^*))$ are now independent on $y ^*$.\\
\textbf{2nd STEP}$\,$ We investigate a 1D inverse inequality. In particular, from Lemma \ref{lemma 1D Jacobi expansion} and from Step 1, we have:
\begin{equation} \label{first 1D estimate}
\begin{split}
\int_{I^*} \Phi(x,y^*)^{\alpha} \q(x,y^*)^2 \dx \le \overline{c}_2^{\alpha} \int_{I^*} \psi^*(x) ^{\alpha} \q(x,y^*)^2\dx \le \frac{\overline{c}_2^{\alpha}}{\overline c_1^\beta} c (\p+1)^{2(\beta-\alpha)}\int_{I^*}\Phi(x,y^*)^{\beta}\q(x,y^*)^2 \dx,
\end{split}
\end{equation}
where $c$ is independent on $y^*$.\\
\textbf{3rd STEP}$\,$ The statement of the lemma is achieved by means of an integration of \eqref{puntual equivalence without dependence} over $y^*\in [0,d]$.
\end{proof}

We show now a global inverse estimate on triangles.
Again, the following result also holds for weight $-1 < \alpha \le \beta$, see \cite{melenk2003hp}.
\begin{thm} \label{theorem L2 weighted inverse estimate on triangles}
Let $0 \le \alpha \le \beta$. Let $\That$ be the reference triangle of vertices $(0,0)$, $(1,0)$ and $(0,1)$. Let $\bThat$ be the cubic bubble function associated with $\That$;
in particular, $\bThat \in \mathbb P_3(\That)$ is such that $\bThat|_{\partial \That} = 0$. Then:
\begin{equation} \label{an inverse estimate on equilateral triangle}
\int_{\That} \bThat^\alpha \q^2 \le \c (\p+1)^{2(\beta-\alpha)} \int _{\That} \bThat^\beta \q^2,\quad \forall \q \in \mathbb P_p(\That),
\end{equation}
where $\c$ is a positive constant independent on $p$.
\end{thm}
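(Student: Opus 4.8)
The plan is to reduce the two-dimensional weighted inverse inequality on $\That$ to the one-dimensional estimate of Lemma \ref{lemma 1D Jacobi expansion} and to the trapezoidal estimate of Lemma \ref{lemma trapezoid}, by covering $\That$ with finitely many \emph{non-degenerate} trapezoids and exploiting the invariance of $\bThat$ under the symmetry group of $\That$. For each vertex $A_i$ of $\That$ let $T_i$ be the trapezoid obtained by deleting from $\That$ the small triangle cut off by the midline parallel to the edge opposite $A_i$ (so, for $A_3=(0,1)$, $T_3=\That\cap\{y\le 1/2\}$). One checks that $\That=T_1\cup T_2\cup T_3$, that each $T_i$ is a trapezoid with two parallel sides of comparable strictly positive length, and that every point of $\That$ lies in at most three of the $T_i$. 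Since $\bThat$ is a fixed positive multiple of the product $\lambda_1\lambda_2\lambda_3$ of the barycentric coordinates of $\That$, and this product is permutation invariant, it suffices to bound $\int_{T_1}\bThat^\alpha\q^2$ by a multiple of $(\p+1)^{2(\beta-\alpha)}\int_{T_1}\bThat^\beta\q^2$ and to sum the three contributions.

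On $T_1=\That\cap\{x+y\ge 1/2\}$ I would slice along the segments $\{x+y=s\}$, $s\in[1/2,1]$, which are parallel to the two slanted sides of $T_1$ (pieces of the edges $\{x=0\}$ and $\{y=0\}$ of $\That$). Along such a slice $\bThat=\c\,(1-s)\cdot x(s-x)$: the factor $x(s-x)$ equals, up to constants bounded above and below uniformly in $s\in[1/2,1]$, the one-dimensional bubble of the slice and vanishes exactly at its endpoints, whereas $\lambda_1=1-s$ is constant along the slice. Thus Lemma \ref{lemma trapezoid}, applied with $\Phi=x(s-x)=\lambda_2\lambda_3$ and carrying the slice-constant factor $(1-s)^\alpha$ along unchanged, yields
\[
\int_{T_1}\bThat^\alpha\q^2 \le \c\,(\p+1)^{2(\beta-\alpha)}\int_{T_1}(1-s)^\alpha\,(\lambda_2\lambda_3)^\beta\,\q^2 .
\]
It remains to upgrade $(1-s)^\alpha$ to $(1-s)^\beta$ in the transverse variable $s$. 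Passing to the ``collapsed'' coordinates $(s,r)$ on $T_1$ defined by $x=rs$, $y=(1-r)s$, the trapezoid $T_1$ becomes the rectangle $[1/2,1]\times[0,1]$, the Jacobian is $s$, the restriction of $\q$ is a polynomial of degree $\le\p$ in each of $s$ and $r$, and $\bThat=\c\,(1-s)\,s^2\,r(1-r)$. For fixed $r$ the factor $s^{2\beta+1}$ (from $\bThat^\beta$ and the Jacobian) is bounded above and below on $[1/2,1]$, so a one-endpoint Jacobi inverse estimate in $s$ — the natural one-sided variant of Lemma \ref{lemma 1D Jacobi expansion}, obtainable by the same ultraspherical/Jacobi arguments or via interpolation, cf.\ \cite{melenk2003hp, triebel, tartar} — replaces $(1-s)^\alpha$ by $(1-s)^\beta$ at the cost of a controlled additional power of $\p+1$. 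Combining this with the previous display and summing the analogous estimates over $T_1, T_2, T_3$ gives the asserted bound $\int_{\That}\bThat^\alpha\q^2\lesssim(\p+1)^{2(\beta-\alpha)}\int_{\That}\bThat^\beta\q^2$.

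Finally, the case $\beta-\alpha\in\mathbb N$ is obtained by iterating the unit-increment case, and general $\beta>\alpha\ge0$ follows by the H\"older-interpolation device used in the third step of the proof of Lemma \ref{lemma 1D Jacobi expansion}. I expect the delicate point to be the \emph{uniformity} of all constants: those produced by Lemma \ref{lemma trapezoid}, by the one-dimensional estimates and by the coordinate changes must be independent of $\p$ and of the slice. This is exactly why the covering uses a \emph{fixed} midline rather than a dyadic refinement toward a vertex: $\bThat$ vanishes to second order at each vertex of $\That$, so on trapezoids shrinking geometrically toward a vertex the equivalence constant between $\bThat$ and the slice bubble degenerates like a negative power of the diameter, and the resulting geometrically growing factors cannot be absorbed into $\int_{\That}\bThat^\beta\q^2$; with the non-degenerate $T_i$ only the (controlled) vanishing of $\bThat$ on the three edges of $\That$ has to be handled — two edges at a time by Lemma \ref{lemma trapezoid}, the third by the transverse one-dimensional estimate.
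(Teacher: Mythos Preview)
Your covering by three midline trapezoids together with a two–stage argument (Lemma~\ref{lemma trapezoid} for the two ``in--slice'' barycentric factors, then a transverse one--sided Jacobi estimate for the remaining factor $1-s$) is natural, but it does \emph{not} deliver the stated exponent. Each of the two stages costs a factor $(\p+1)^{2(\beta-\alpha)}$: the first turns $(\lambda_2\lambda_3)^\alpha$ into $(\lambda_2\lambda_3)^\beta$, the second turns $(1-s)^\alpha$ into $(1-s)^\beta$. Their composition therefore yields
\[
\int_{T_1}\bThat^\alpha \q^2 \;\lesssim\;(\p+1)^{4(\beta-\alpha)}\int_{T_1}\bThat^\beta \q^2,
\]
and the H\"older/iteration device at the end cannot repair this: if the unit increment $\beta-\alpha=1$ already costs $(\p+1)^4$, interpolation gives $(\p+1)^{4(\beta-\alpha)}$ in general. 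So the final sentence claiming the sharp power $(\p+1)^{2(\beta-\alpha)}$ does not follow from your argument.

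The paper avoids this loss by a different decomposition: it uses \emph{six} overlapping trapezoids $D_1,\dots,D_6$ (two per vertex), whose legs lie on two edges of $\That$ and whose bases are interior segments along which the \emph{third} barycentric coordinate stays bounded away from zero. On such a trapezoid the full bubble $\bThat=\lambda_1\lambda_2\lambda_3$ itself satisfies, slice by slice, the pointwise equivalence with the reference slice bubble required in Lemma~\ref{lemma trapezoid}; hence a \emph{single} application of that lemma with $\Phi=\bThat$ already gives the sharp factor $(\p+1)^{2(\beta-\alpha)}$ on each $D_i$, and the overlapping sum concludes. Your three trapezoids $T_i$ cannot play this role because one of their parallel sides is the opposite edge of $\That$, where $\bThat\equiv 0$: on $T_1$ one has $\bThat/\psi^\ast\approx 1-s\to 0$ as $s\to 1$, so the uniform lower bound in Lemma~\ref{lemma trapezoid} fails for $\Phi=\bThat$, which is precisely why you were forced into the two--stage (and hence exponent--doubling) route.
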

\begin{proof}
The proof is similar to that in \cite[Theorem D2]{melenk2003hp} and for this reason we only sketch it.
The idea consists in partitioniong $\That$ into six (overlapping) trapezoidals and applying some $\p$ inverse estimates analogous to those presented in Lemma \ref{lemma trapezoid}.
In particular, the six overlapping trapezoids $D_1,\dots, D_6$ are built, for instance, as in Figure \ref{figure trapezoids}.
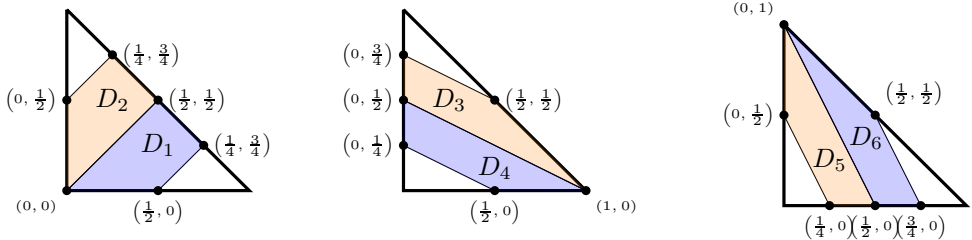
\begin{figure}[H]
\centering
\begin{minipage}{0.30\textwidth}
\begin{center}
\begin{tikzpicture}[scale=1.2]
\draw (-1,-1) node[black, below left] {\tiny{$(0,0)$}}; \draw (0,0) node[black, right] {\tiny{$\left(\frac{1}{2},\frac{1}{2}\right)$}}; \draw (-0.5,0.5) node[black, right] {\tiny{$\left(\frac{1}{4},\frac{3}{4}\right)$}};
\draw (-1,0) node[black, left] {\tiny{$\left(0,\frac{1}{2}\right)$}}; \draw (0,-1) node[black, below] {\tiny{$\left(\frac{1}{2},0\right)$}}; \draw (0.5,-0.5) node[black, right] {\tiny{$\left(\frac{1}{4},\frac{3}{4}\right)$}};
\draw[black, very thick, -] (-1,1) -- (-1,-1) -- (1,-1) -- (-1,1);
\draw[black, -] (-0.5, 0.5) -- (-1,0) -- (-1,-1)  -- (0,0)-- (-0.5,0.5);
\fill[orange,opacity=0.2] (-0.5, 0.5) -- (-1,0) -- (-1,-1) -- (0,0) -- (-0.5,0.5);
\draw[black, -] (0, 0) -- (-1,-1) -- (0,-1) -- (0.5, -0.5) -- (0,0);
\fill[blue,opacity=0.2] (0, 0) -- (-1,-1) -- (0,-1) -- (0.5, -0.5) -- (0,0);
\fill[black] (-1,-1) circle(0.05cm); \fill[black] (0,-1) circle(0.05cm); \fill[black] (0.5,-0.5) circle(0.05cm); \fill[black] (0,0) circle(0.05cm); \fill[black] (-0.5,0.5) circle(0.05cm); \fill[black] (-1,0) circle(0.05cm);
\draw(-0.5,0) node[black] {{$D_2$}}; \draw(0,-0.5) node[black] {{$D_1$}};
\end{tikzpicture}
\end{center}
\end{minipage}
\begin{minipage}{0.30\textwidth}
\begin{center}
\begin{tikzpicture}[scale=1.2]
\draw (-1,0.5) node[black, left] {\tiny{$\left(0,\frac{3}{4} \right)$}}; \draw (0,0) node[black, right] {\tiny{$\left(\frac{1}{2},\frac{1}{2}\right)$}}; \draw (-1,0) node[black, left] {\tiny{$\left(0,\frac{1}{2}\right)$}};
\draw (-1,-0.5) node[black, left] {\tiny{$\left(0,\frac{1}{4}\right)$}}; \draw (0,-1) node[black, below] {\tiny{$\left(\frac{1}{2},0\right)$}}; \draw (1,-1) node[black, below right] {\tiny{$\left(1,0\right)$}};
\draw[black, very thick, -] (-1,1) -- (-1,-1) -- (1,-1) -- (-1,1);
\draw[black, -] (0, 0) -- (-1,0.5) -- (-1,0)  -- (1,-1)-- (0,0);
\fill[orange,opacity=0.2] (0, 0) -- (-1,0.5) -- (-1,0)  -- (1,-1)-- (0,0);
\draw[black, -] (0, -1) -- (-1,-0.5) -- (-1,0)  -- (1,-1)-- (0,-1);
\fill[blue,opacity=0.2] (0, -1) -- (-1,-0.5) -- (-1,0)  -- (1,-1)-- (0,-1);
\fill[black] (-1,0.5) circle(0.05cm); \fill[black] (0,-1) circle(0.05cm); \fill[black] (-1,-0.5) circle(0.05cm); \fill[black] (0,0) circle(0.05cm); \fill[black] (1,-1) circle(0.05cm); \fill[black] (-1,0) circle(0.05cm);
\draw(-0.5,0) node[black] {{$D_3$}}; \draw(0,-0.75) node[black] {{$D_4$}};
\end{tikzpicture}\end{center}
\end{minipage}
\begin{minipage}{0.30\textwidth}
\begin{center}
\begin{tikzpicture}[scale=1.2]
\draw (-1,1) node[black, above left] {\tiny{$(0,1)$}}; \draw (0,0) node[black, above right] {\tiny{$\left(\frac{1}{2},\frac{1}{2}\right)$}}; \draw (0.5,-1) node[black, below] {\tiny{$\left(\frac{3}{4}, 0 \right)$}};
\draw (-1,0) node[black, left] {\tiny{$\left(0,\frac{1}{2}\right)$}}; \draw (0,-1) node[black, below] {\tiny{$\left(\frac{1}{2},0\right)$}}; \draw (-0.5,-1) node[black, below] {\tiny{$\left(\frac{1}{4},0\right)$}};
\draw[black, very thick, -] (-1,1) -- (-1,-1) -- (1,-1) -- (-1,1);
\draw[black, -] (0, -1) -- (-1,1) -- (-1,0)  -- (-0.5,-1)-- (0,-1);
\fill[orange, opacity=0.2] (0, -1) -- (-1,1) -- (-1,0)  -- (-0.5,-1)-- (0,-1);
\draw[black, -] (0, -1) -- (-1,1) -- (0,0) -- (0.5, -1) -- (0,-1);
\fill[blue,opacity=0.2] (0, -1) -- (-1,1) -- (0,0) -- (0.5, -1) -- (0,-1);
\fill[black] (-0.5,-1) circle(0.05cm); \fill[black] (0,-1) circle(0.05cm); \fill[black] (0.5,-1) circle(0.05cm); \fill[black] (0,0) circle(0.05cm); \fill[black] (-1,1) circle(0.05cm); \fill[black] (-1,0) circle(0.05cm);
\draw(-0.5,-0.5) node[black] {{$D_5$}}; \draw(-0.1,-0.25) node[black] {{$D_6$}};
\end{tikzpicture}
\end{center}
\end{minipage}
\caption{Overlapping trapezodails covering the reference triangle $\That$} \label{figure trapezoids}
\end{figure}

We observe that we can apply Lemma \ref{lemma trapezoid} since its hypotesis are satisfied.
In fact, on all the $D_i$, $i=1,\dots, 6$, $\bThat |_{D_i}$ is a continuous function whose restriction on every segment parallel to the basis of the trapezoidal is a cubic concave polynomial annihilating only at the endpoints of the segment.

Estimate \eqref{an inverse estimate on equilateral triangle} follows then from an overlapping argument applying Lemma \ref{lemma trapezoid} on all the elements, noting that $\That = \cup_{i=1}^6 D_i$.
\end{proof}

We discuss also the following result, firstly presented in \cite[Lemma B.3]{bank2013saturation}.
\begin{lem} \label{lemma Bank Parsania Sauter}
Let $\T$ be a triangle, $\bT$ the associated cubic bubble function. Then:
\[
\vert \q \bT \vert_{1,\T} \le \c \frac{\p+1}{h_\T} \Vert \q \bT^{\frac{1}{2}}\Vert_{0,\T}, \quad \forall q \in \mathbb P_\p(\T),
\]
where $c$ is a positive constant independent on $h_\T$ and $\p$, $h_\T=\diam (\T)$.
\end{lem}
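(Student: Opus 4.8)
The plan is to reduce to the reference triangle and there combine a single integration by parts with the weighted polynomial inverse estimate of Theorem~\ref{theorem L2 weighted inverse estimate on triangles}.

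A standard affine scaling argument -- well conditioned because the triangles arising in our applications are uniformly shape regular, see Remark~\ref{remark regular subtriangulation} -- reduces the claim to the scale--invariant inequality
\[
\vert \q\,\bThat \vert_{1,\That} \le \const\,(\p+1)\,\bigl\Vert \q\,\bThat^{\frac12} \bigr\Vert_{0,\That}, \qquad \forall\,\q\in\mathbb P_\p(\That),
\]
with $\const$ depending only on the shape of $\T$. For this I would use the product rule $\nabla(\q\bThat)=\bThat\nabla\q+\q\nabla\bThat$ and integrate the mixed term by parts; the boundary contribution vanishes since $\bThat$ vanishes on $\partial\That$, and one is left with the identity
\[
\vert \q\,\bThat \vert_{1,\That}^2 = \int_{\That}\bThat^2\,\vert\nabla\q\vert^2 \;-\; \int_{\That}\q^2\,\bThat\,\Delta\bThat .
\]
The second term is harmless: $\bThat$ being a fixed smooth function on $\That$, one has $\bigl\vert\int_{\That}\q^2\bThat\Delta\bThat\bigr\vert\le\Vert\Delta\bThat\Vert_{L^\infty(\That)}\bigl\Vert\q\,\bThat^{\frac12}\bigr\Vert_{0,\That}^2$. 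Hence the whole statement follows once one proves the weighted $L^2$ Markov--type estimate
\[
\int_{\That}\bThat^2\,\vert\nabla\q\vert^2 \le \const\,(\p+1)^2\int_{\That}\bThat\,\q^2, \qquad \forall\,\q\in\mathbb P_\p(\That),
\]
since this gives $\vert\q\bThat\vert_{1,\That}^2\le\const(\p+1)^2\bigl\Vert\q\bThat^{1/2}\bigr\Vert_{0,\That}^2$, and undoing the scaling concludes.

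The weighted Markov estimate is the real content. I would prove it exactly in the spirit of Theorem~\ref{theorem L2 weighted inverse estimate on triangles}: via a collapsed--coordinate (Duffy) map, or equivalently via the overlapping--trapezoid covering of $\That$ used in Lemma~\ref{lemma trapezoid}, the weight $\bThat$ becomes -- up to bounded factors -- a product of one--dimensional bubble weights times the Jacobian, and the statement reduces to the one--dimensional inequality
\[
\int_{\Ihat}(1-x^2)^2\,w'(x)^2\,\dx \le \const\,(\p+1)^2\int_{\Ihat}(1-x^2)\,w(x)^2\,\dx, \qquad \forall\,w\in\mathbb P_\p(\Ihat).
\]
This last bound comes straight from the orthogonality of the \ultraspherical polynomials $\Jalphan$ with $\alpha=1$: expanding $w=\sum_n c_n J_n^{1}$, equation~\eqref{ultraspherical equation} with $\alpha=1$ together with an integration by parts shows that $\{J_n^{1}\}$ simultaneously diagonalises the forms $u,v\mapsto\int_{\Ihat}(1-x^2)\,u\,v$ and $u,v\mapsto\int_{\Ihat}(1-x^2)^2\,u'\,v'$, the eigenvalues of the latter being $n(n+3)$; therefore $\int_{\Ihat}(1-x^2)^2\vert w'\vert^2=\sum_n c_n^2\,n(n+3)\int_{\Ihat}(1-x^2)(J_n^{1})^2\le\p(\p+3)\int_{\Ihat}(1-x^2)w^2$. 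Alternatively, the assertion of the lemma is verbatim \cite[Lemma B.3]{bank2013saturation}, and a closely related weighted Markov inequality is \cite[Lemma D.3]{melenk2003hp}, so one may simply cite these.

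The main obstacle is getting the sharp \emph{linear} dependence on $\p$ in this last step. Naive routes -- bounding $\int_{\That}\bThat^2\vert\nabla\q\vert^2\le\Vert\bThat\Vert_{L^\infty(\That)}\int_{\That}\vert\nabla\q\vert^2$ and applying an ordinary inverse estimate, or iterating integration by parts together with Theorem~\ref{theorem L2 weighted inverse estimate on triangles} -- systematically lose one power of $\p$ and yield only $\vert\q\bThat\vert_{1,\That}\lesssim(\p+1)^2\bigl\Vert\q\bThat^{1/2}\bigr\Vert_{0,\That}$. Retaining \emph{both} powers of the bubble weight and using that the shifted ultraspherical polynomials diagonalise both quadratic forms is what restores the correct constant; interpolation theory \cite{triebel,tartar} can be used to slide between non--integer weight exponents wherever convenient.
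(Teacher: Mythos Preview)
Your proposal is correct and rests on the same essential ingredient as the paper's proof --- the one--dimensional weighted Markov inequality $\int_{\Ihat}(1-x^2)^2(w')^2\le C(\p+1)^2\int_{\Ihat}(1-x^2)w^2$ --- which you prove exactly as the paper does (the paper's ``derivated Legendre'' expansion $\q=\sum c_n L_n'$ is your ultraspherical expansion in $J_{n-1}^1$ up to normalisation). The organisation differs: you perform the product rule once in two dimensions, obtaining the neat identity $\vert\q\bThat\vert_{1,\That}^2=\int_{\That}\bThat^2\vert\nabla\q\vert^2-\int_{\That}\q^2\bThat\Delta\bThat$, and then reduce the resulting two--dimensional weighted Markov bound to the one--dimensional one; the paper instead first upgrades the 1D Markov bound to $\Vert(b_{[a,b]}\q)'\Vert_{0,[a,b]}\le C\frac{\p+1}{b-a}\Vert b_{[a,b]}^{1/2}\q\Vert_{0,[a,b]}$ (using Leibniz and Lemma~\ref{lemma 1D Jacobi expansion} with $\alpha=0$, $\beta=1$ to absorb the extra $\Vert\q\Vert_0$ term), and only afterwards lifts to the triangle by writing $\bThat=b_{[0,1-x]}(y)(1-x)b_{[0,1]}(x)$ and slicing at fixed $x$. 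Your route is arguably cleaner conceptually; the paper's is more explicitly constructive. One small imprecision: invoking Lemma~\ref{lemma trapezoid} and Theorem~\ref{theorem L2 weighted inverse estimate on triangles} for your 2D--to--1D step is not quite right, since those are $L^2$--$L^2$ weighted inequalities rather than Markov--type; what actually carries your weighted Markov estimate through is the same slicing argument the paper uses in its third step (fix $x$, apply the 1D bound on $[0,1-x]$, integrate in $x$, use $x^2\le x$), which goes through without difficulty once the bubble factorisation above is written down.
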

\begin{proof}
The proof is split into three parts; the first two of them are technical results dealing with Legendre-type approximations, while the third one deals with the proof of the lemma.\\
\textbf{1st STEP}
The following estimate holds: for all $\q \in \mathbb P_p(\Ihat)$, $\Ihat:=[-1,1]$
\begin{equation} \label{estimate using derivated Legendre sum}
\Vert (1-x^2) \q'(x)\Vert _{0,\Ihat} \le c (\p+1) \Vert (1-x^2)^{\frac{1}{2}} \q(x)\Vert _{0,\Ihat},
\end{equation}
where $c$ is a constant independent on $\p$.

We recall that the following holds, see \cite[formula (3.39)]{SchwabpandhpFEM}
\[
\int_{\Ihat} (1-x^2)^k L_n^{(k)}(x)^2\dx = 
\begin{cases}
\frac{2}{2n+1} \frac{(n+k)!}{(n-k)!} & \text{if } n \ge k,\\
0                                                           & \text{otherwise},\\
\end{cases}
\]
where $L_n^{(k)}(x)$ is the $k$-th derivative of the $n$-th Legendre polynomial. Then:
\begin{equation} \label{application Schwab formula}
\int_ {\Ihat} (1-x^2)^2 L_n''(x)^2\dx = \frac{2}{2n+1} \frac{(n+1)!}{(n-1)!} (n-1)(n+2) \le \left (n+\frac{1}{2} \right)^2 \int_{\Ihat} (1-x^2) L_n'(x)^2\dx.
\end{equation}
Therefore, expanding $\q$ into a derivated Legendre sum
\[
\q(x) = \sum_{n=1}^{\p+1} c_n L_n'(x),
\]
we have, owing to orthogonality of the second derivative of Legendre polynomials with respect to the $L^2$ $(1-x^2)^2$-weighted inner product and owing to \eqref{application Schwab formula}:
\[
\begin{split}
\int_{\Ihat} \q'(x) (1-x^2)^2 \dx 	& = \sum_{n=1}^{\p+1} c_n^2 \int_{\Ihat} L_n''(x)^2 (1-x^2)^2 \dx \le \sum_{n=1}^{\p+1} c_n^2 \left(n+\frac{1}{2}\right)^2 \int_{\Ihat} L_n'(x)^2 (1-x^2)\dx \\
						& \le \left( \p + \frac{3}{2} \right)^2 \int_{\Ihat} \q^2(x) (1-x^2)\dx \le \frac{3}{2}\left( \p + 1 \right)^2 \int_{\Ihat} \q^2(x) (1-x^2) \dx.\\
\end{split}
\]
\textbf{2nd STEP}
We show now the following 1D estimate. For $a<b$, let 
$$
b_{[a,b]}(x):= \frac{(x-a)(b-x)}{(b-a)^2}
$$
be the 1D quadratic bubble function, then:
\begin{equation} \label{stima bank 1D}
\left \Vert (b_{[a,b]}\q)' \right\Vert_{0,[a,b]} \le \c \frac{p+1}{b-a} \left\Vert b_{[a,b]}^{\frac{1}{2}}\q \right\Vert_{0,[a,b]},\quad \forall \q \in \mathbb P_\p([a,b]),
\end{equation}
where $c$ is a positive constant independent on $\p$.
It is sufficient to show \eqref{stima bank 1D} on the reference interval $[-1,1]$, since the general result follows from a scaling argument.

Owing to $\Vert b_{[-1,1]}'\Vert_{\infty,[-1,1]}=\frac{1}{2} < 1$, the Leibniz derivation rule and a triangular inequality, we can write:
\begin{equation} \label{Leibniz plus triangular}
\Vert (b_{[-1,1]}\q)' \Vert_{0,[-1,1]} \le \Vert b'_{[-1,1]} \q \Vert _{0,[-1,1]} + \Vert b_{[-1,1]} \q' \Vert_{0,[-1,1]} \le \Vert \q \Vert_{0,[-1,1]} + \Vert b_{[-1,1]} \q' \Vert_{0,[-1,1]}.
\end{equation}
Applying \eqref{estimate weighted L2 1D} (with $\alpha=0$ and $\beta=1$) and \eqref{estimate using derivated Legendre sum} to the first and second term of \eqref{Leibniz plus triangular} respectively, we get \eqref{stima bank 1D}.\\
\textbf{3rd STEP}
We apply now \eqref{stima bank 1D} and we show the claim of the lemma.

Without loss of generality, we work on the reference triangle $\That=\T$ of vertices $(0,0)$, $(1,0)$ and $(0,1)$. The statement follows from a scaling argument.
The cubic bubble function on $\That$, which is given by the product of the barycentric coordinates, can be rewritten as:
\[
b_{\That} = b_{[0,1-x]}(y) (1-x) b_{[0,1]}(x).
\]
We only show the bound on the partial derivative with respect to $y$. The general case is an easy consequence.
\[
\begin{split}
\Vert \partial_y (b_{\That} \q) \Vert ^2_{0,\That} 	&= \int_0^1 \int _0^{1-x} (\partial_y (b_{\That}(x,y) q(x,y) ))^2 \dy \dx\\
								&= \int_0^1 b_{[0,1]} ^2(x)(1-x)^2  \int_0^{1-x} \left( \partial_y (b_{[0,1-x]}(y) \q(x,y))\right)^2\dy  \dx.
\end{split}
\]
We note that:
\[
\begin{split}
 (1-x)^2 \int_0^{1-x} \left( \partial_y b_{[0,1-x]}(y) \q(x,y)\right)^2\dy	&= (1-x)^2 \Vert \partial_y (b_{[0,1-x]}(\cdot) \q(x,\cdot))\Vert^2 _{0,[0,1-x]} \\
								&\underbrace{\le}_{\eqref{stima bank 1D}} c(\p+1)^2 \Vert b_{[0,1-x]}^{\frac{1}{2}}(\cdot) \q(x,\cdot)\Vert^2 _{0,[0,1-x]}.
\end{split}
\]
Since $b_{[0,1]} \le 1-x$, we get $b^2_{[0,1]}(x) b_{[0,1-x]}(y) \le b_{\That} (x,y)$ and consequently:
\[
\Vert \partial_2 (b_{\That} \q) \Vert ^2_{0,\That} \le \c (\p+1)^2 \int_0^1 \int_0^{1-x} b^2_{[0,1]} (x) b_{[0,1-x]}(y) \q^2(x,y)\dy\dx
\le \c(\p+1)^2 \Vert b_{\That}^{\frac{1}{2}} \q \Vert^2_{0,\That}.
\]
\end{proof}
We are now ready for the inverse estimate involving the $H^{-1}$ norm of polynomials.
\begin{thm} \label{corollary inverse polynomial estimate with negative norm}
Let $\E\subseteq \mathbb R^2$ be a polygon. Assume that there exists $\taun (\E)$ subtriangulation of $\E$ such that $h_\E\approx h_\T$, where $h_\omega=\diam (\omega)$, $\omega\subseteq\mathbb R^2$.
Let $\q\in \mathbb P_\p(\E)$, $\p \in \mathbb N$. Then:
\[
\Vert q \Vert _{0,\E} \le \c \frac{(\p+1)^2}{h_\E} \Vert \q\Vert _{-1,\E},
\]
where $\Vert q \Vert _{-1,\E}:=\Vert q \Vert_{(H_0^1(\E))^*}$.
\end{thm}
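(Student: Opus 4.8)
The plan is to use the duality characterization
\[
\Vert \q \Vert_{-1,\E} = \sup_{\Phi \in H^1_0(\E)\setminus\{0\}} \frac{(\Phi,\q)_{0,\E}}{\vert \Phi \vert_{1,\E}}
\]
and to bound the supremum from below by testing against one carefully chosen function built from piecewise cubic bubbles. On each triangle $\T$ of the subtriangulation $\taun(\E)$ let $\bT \in \mathbb P_3(\T)$ be the cubic bubble function, i.e. the product of the barycentric coordinates of $\T$, so that $\bT$ vanishes on $\partial \T$ and $\Vert \bT \Vert_{\infty,\T}\approx 1$. Define $\Phi$ by $\Phi|_\T := \q|_\T\,\bT$ for every $\T \in \taun(\E)$. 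Since every $\bT$ vanishes on $\partial\T$, this piecewise polynomial is continuous across the interior edges of $\taun(\E)$ and vanishes on $\partial\E$, so $\Phi \in H^1_0(\E)$; moreover $\Phi \not\equiv 0$ whenever $\q \not\equiv 0$ (the case $\q\equiv 0$ being trivial). The whole proof then consists in lower bounding $(\Phi,\q)_{0,\E}$ and upper bounding $\vert\Phi\vert_{1,\E}$.

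First I would estimate $(\Phi,\q)_{0,\E} = \sum_{\T\in\taun(\E)} \int_\T \bT\,\q^2$ from below. Applying Theorem \ref{theorem L2 weighted inverse estimate on triangles} with $\alpha=0$ and $\beta=1$ on each $\T$ — after pulling $\T$ back to the reference triangle $\That$ by an affine map, which leaves the inequality and its constant unchanged since $\q$ becomes a polynomial of the same degree, $\bT$ becomes $\bThat$, and the Jacobian cancels — gives $\int_\T \q^2 \le \c\,(\p+1)^2 \int_\T \bT\,\q^2$. Summing over $\T$ yields
\[
\Vert \q \Vert_{0,\E}^2 \le \c\,(\p+1)^2\,(\Phi,\q)_{0,\E}.
\]

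Next I would bound $\vert\Phi\vert_{1,\E}$ from above. Lemma \ref{lemma Bank Parsania Sauter} applied on each $\T$ gives $\vert \q\bT \vert_{1,\T} \lesssim \tfrac{\p+1}{h_\T}\Vert \q\bT^{1/2}\Vert_{0,\T}$, with a constant uniform over the shape-regular triangles of $\taun(\E)$ (Remark \ref{remark regular subtriangulation}); since $h_\T\approx h_\E$ and $\Vert \q\bT^{1/2}\Vert_{0,\T}^2 = \int_\T \bT\,\q^2$, squaring and summing over $\T$ gives
\[
\vert\Phi\vert_{1,\E}^2 \lesssim \frac{(\p+1)^2}{h_\E^2}\sum_{\T\in\taun(\E)}\int_\T \bT\,\q^2 = \frac{(\p+1)^2}{h_\E^2}\,(\Phi,\q)_{0,\E}.
\]
Combining the two displays and using $(\Phi,\q)_{0,\E}>0$ for $\q\not\equiv 0$,
\[
\Vert \q \Vert_{-1,\E} \ge \frac{(\Phi,\q)_{0,\E}}{\vert\Phi\vert_{1,\E}} \gtrsim \frac{h_\E}{\p+1}\,(\Phi,\q)_{0,\E}^{1/2} \gtrsim \frac{h_\E}{(\p+1)^2}\,\Vert \q \Vert_{0,\E},
\]
which is the assertion. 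The substantive work is hidden in the two cited one‑dimensional/triangular estimates; the remaining points are bookkeeping: verifying that $\Phi$ genuinely lies in $H^1_0(\E)$ (both the continuity across interior edges and the homogeneous boundary values follow from $\bT|_{\partial\T}=0$), and checking that the constants in Theorem \ref{theorem L2 weighted inverse estimate on triangles} and Lemma \ref{lemma Bank Parsania Sauter} are uniform over the triangles of $\taun(\E)$ — the latter being precisely where the hypothesis $h_\E\approx h_\T$ and the shape‑regularity of the subtriangulation enter.
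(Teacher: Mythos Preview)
Your proof is correct and follows essentially the same route as the paper: both test the dual norm against the piecewise bubble function $\Phi|_\T=\q\,\bT$, bound the numerator $(\Phi,\q)_{0,\E}=\Vert \q\sqrt{\bE}\Vert_{0,\E}^2$ via Theorem~\ref{theorem L2 weighted inverse estimate on triangles} with $\alpha=0,\beta=1$, and bound the denominator $\vert\Phi\vert_{1,\E}$ via Lemma~\ref{lemma Bank Parsania Sauter}. The only difference is cosmetic ordering; your explicit verification that $\Phi\in H^1_0(\E)$ is a welcome addition.
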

\begin{proof}
Let $\bE$ be the ``patch-bubble'' function, defined on each $\T\in \taun(\E)$ as the local cubic bubble function $\bT$ introduced in Lemma \ref{lemma Bank Parsania Sauter}. Then:
\begin{equation} \label{estimates first step}
\Vert \q \Vert_{-1,\E} = \sup_{\Phi \in H_0^1(\E),\,\Phi\ne 0} \frac{(q,\Phi)_{0,\E}}{\vert \Phi \vert_{1,\E}}\ge \frac{(\q,\q\bE)_{0,\E}}{\vert \q \bE\vert_{1,\E}}
= \frac{\Vert \q\sqrt \bE \Vert^2_ {0,\E}}{ \left(\sum_{\T\in \taun(\E)} \vert \q \bT \vert ^2_{1,\T}\right)^{\frac{1}{2}}}.
\end{equation}
Using now Lemma \ref{lemma Bank Parsania Sauter}, \eqref{estimates first step} and the geometric assumption (\textbf{D2}), we obtain:
\begin{equation}
\Vert \q \Vert_{-1,\E} \ge \c \frac{\min_{\T \in \taun (\E)} h_\T}{\p+1} \Vert \q \sqrt{\bE} \Vert_{0,\E} \ge \c \widetilde\gamma \frac{h_\E}{\p+1} \left( \sum_{\T\in \taun(\E)} \Vert \q\sqrt{\bT} \Vert ^2_{0,\T}\right)^{\frac{1}{2}}.
\end{equation}
Finally, we apply Theorem \ref{theorem L2 weighted inverse estimate on triangles} with $\alpha=0$ and $\beta=1$ and get:
\[
\Vert \q \Vert_{-1,\E} \ge \c \frac{h_\E}{(\p+1)^2} \left( \sum_{\T\in \taun(\E)} \Vert \q\Vert ^2_{0,\T}\right)^{\frac{1}{2}} = \c \frac{h_\E}{(\p+1)^2} \Vert \q \Vert_{0,\E}.
\]
\end{proof}
\section{} \label{section appendix B}
In this second appendix, we discuss the following standard $hp$ polynomial inverse estimate on triangles:
\begin{thm} \label{theorem hp inverse estimate H1 with L2 on triangles}
Let $\T\subseteq \mathbb R^2$ be a triangle and let $h_\T$ denote the diameter of $\T$. Then:
\begin{equation} \label{hp inverse estimate H1 with L2 on triangles}
\vert \q \vert _ {1,\T} \le \cinv \frac{\p^2}{h_\T} \Vert \q \Vert _{0,\T},\quad \forall \q \in \mathbb P _\p(\T),\quad \p\ge 1,
\end{equation}
where $\cinv$ is a positive constant independent on $h_\T$, $\p$ and $\q$.
\end{thm}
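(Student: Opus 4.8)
The plan is to reduce the estimate on a general triangle $\T$ to the reference triangle $\That$ by an affine scaling argument, and then to establish the inverse estimate \eqref{hp inverse estimate H1 with L2 on triangles} on $\That$ by expanding polynomials in an orthogonal basis. First I would recall that if $F_\T:\That\to\T$ is the affine map with Jacobian $B_\T$, then for $\q\in\mathbb P_\p(\T)$ and $\qhat:=\q\circ F_\T\in\mathbb P_\p(\That)$ one has $|\q|_{1,\T}^2 \approx |\det B_\T|\,\|B_\T^{-1}\|^2\,|\qhat|_{1,\That}^2$ and $\|\q\|_{0,\T}^2 = |\det B_\T|\,\|\qhat\|_{0,\That}^2$; since $\T$ is an arbitrary (not necessarily shape-regular) triangle, I would be slightly careful here, but the standard estimates $\|B_\T\|\lesssim h_\T$ and $\|B_\T^{-1}\|\lesssim h_\T^{-1}\cdot(\text{shape factor})$ suffice — and in fact for the \emph{isotropic} seminorm $|\cdot|_{1}$ the shape-regularity constants cancel in the way needed, because $|\q|_{1,\T}$ transforms with the operator norm of $B_\T^{-1}$ while the volume factor handles the rest. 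The cleanest route is: it is enough to prove \eqref{hp inverse estimate H1 with L2 on triangles} with $h_\T=1$ on a fixed reference triangle, and then rescale; the dependence on $h_\T$ is exactly $h_\T^{-1}$ by homogeneity of the two sides under dilation, and the dependence on the shape of $\T$ is absorbed into $\cinv$ only if one restricts to shape-regular triangles — but here, since the statement claims $\cinv$ independent of $\T$, the intended reading is that $\T$ ranges over triangles of a fixed shape (or the constant is understood to also depend on the shape-regularity, consistent with its use via Remark~\ref{remark regular subtriangulation} where $\gamma_1$ is uniform). I would state the affine reduction as the first step and move on.

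Next, on the reference triangle $\That$ I would prove $|\qhat|_{1,\That}\le \cinv\,\p^2\,\|\qhat\|_{0,\That}$ for all $\qhat\in\mathbb P_\p(\That)$. The key tool is the Koornwinder (Dubiner) orthogonal basis $\{D_{k,m}\}_{k+m\le\p}$ of $\mathbb P_\p(\That)$, which is $L^2(\That)$-orthogonal and whose members are collapsed tensor products of Jacobi polynomials; these are precisely the polynomials already invoked in Section~\ref{subsection discontinuous estimates} of the paper. Expanding $\qhat=\sum_{k+m\le\p} c_{k,m} D_{k,m}$, orthogonality gives $\|\qhat\|_{0,\That}^2=\sum |c_{k,m}|^2\|D_{k,m}\|_{0,\That}^2$, so it suffices to show that the gradient map has operator norm $O(\p^2)$ in this basis, i.e. $|\qhat|_{1,\That}^2 \lesssim \p^4 \sum |c_{k,m}|^2\|D_{k,m}\|_{0,\That}^2$. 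This follows from the classical differentiation formulas for Jacobi polynomials (the derivative of $D_{k,m}$ is a finite linear combination of lower-index Dubiner modes with coefficients growing at most quadratically in the indices, and the $L^2(\That)$ norms of consecutive modes are comparable up to polynomial-in-index factors that are dominated by the $\p^2$ bound), together with a Cauchy–Schwarz argument over the $O(\p^2)$ terms in the expansion. Alternatively — and perhaps more in the spirit of the rest of the appendix — I would derive the two-dimensional estimate from the one-dimensional Markov inequality $\|r'\|_{0,\Ihat}\le c\,\p^2\|r\|_{0,\Ihat}$ for $r\in\mathbb P_\p(\Ihat)$ applied on the triangle via the collapsed-coordinate (Duffy) transformation, handling the coordinate singularity of that map by using the weighted one-dimensional inverse estimates of Lemma~\ref{lemma 1D Jacobi expansion} to control the Jacobian weight $(1-x)$ that appears.

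The main obstacle I anticipate is not the scaling step but making the $\p^4\mapsto\p^2$ bookkeeping on $\That$ clean: one must verify that, after differentiating a Dubiner mode and re-expanding in the orthogonal basis, the accumulated index-dependent constants together with the number $O(\p^2)$ of active modes still collapse to the sharp $\p^2$ factor on $|\q|_1$ rather than a suboptimal power. This is where the precise Jacobi derivative identities and the comparability of norms $\|D_{k,m}\|_{0,\That}$ across neighbouring indices are essential, and it is the only place genuine computation is needed; everything else is routine affine equivalence. Since the statement is standard (it is quoted from the literature on $hp$-FEM, e.g. the references cited in Section~\ref{subsection discontinuous estimates}), I would in the write-up reduce to the reference triangle, record the Dubiner-basis/Markov-inequality argument for the $O(\p^2)$ operator-norm bound of the gradient, and refer to the standard sources for the one-dimensional Markov inequality and the Jacobi differentiation formulas rather than reproducing them.
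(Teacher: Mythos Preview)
Your proposal is viable in principle, but the paper takes a considerably more elementary route that sidesteps both the Dubiner bookkeeping and the Duffy singularity you flag. After the same affine reduction to the reference triangle $\That$, the paper covers $\That$ by three overlapping \emph{parallelograms} $P_1,P_2,P_3$ (each having one vertex of $\That$ and the midpoints of the adjacent edges), maps each $P_i$ affinely to the reference square $\Qhat=[-1,1]^2$, and then proves $\vert \q\vert_{1,\Qhat}\le c\,\p^2\Vert \q\Vert_{0,\Qhat}$ by applying the one-dimensional Markov inequality $\Vert r'\Vert_{0,\Ihat}\le c\,\p^2\Vert r\Vert_{0,\Ihat}$ on each horizontal (respectively vertical) slice and integrating in the transverse variable. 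The point is that on a square the slice-by-slice argument works with no weight at all, so neither the Jacobi differentiation identities nor the weighted inverse estimate of Lemma~\ref{lemma 1D Jacobi expansion} are needed; the overlapping-parallelogram trick is precisely what avoids the degenerate Jacobian of the Duffy map. Your Dubiner/Duffy arguments would ultimately arrive at the same $\p^2$ bound, but at the cost of the index-by-index computation you yourself identify as the main obstacle; the paper's decomposition is a cleaner and essentially computation-free alternative. Your remark on the shape-regularity dependence of $\cinv$ is well taken and is implicit in the paper's phrase ``a scaling argument''.
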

We note that inequality \eqref{hp inverse estimate H1 with L2 on triangles} is a very well-known and widely used result. It is stated for instance in \cite[Theorem 4.76]{SchwabpandhpFEM}.
Nonetheless, we were not able to find an explicit proof in literature.
\begin{proof}[Proof of Theorem \ref{theorem hp inverse estimate H1 with L2 on triangles}]
We show the result on the reference triangle $\That$ of vertices $(0,0)$, $(1,0)$, $(0,1)$.
The statement will follow from a scaling argument.

We consider a decomposition of $\That$ into the three overlapping parallelograms $P_1$, $P_2$ and $P_3$ depicted in Figure \ref{figure parallelograms}.
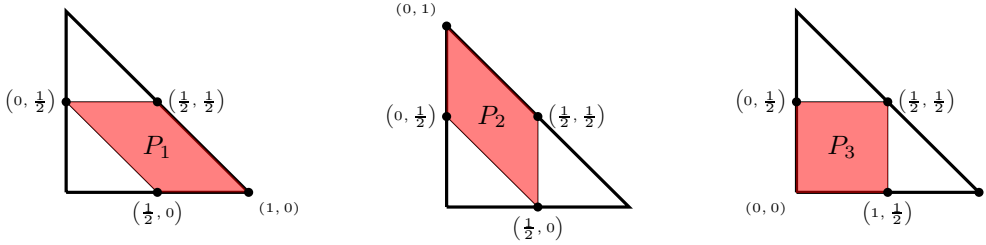
\begin{figure}[H]
\centering
\begin{minipage}{0.30\textwidth}
\begin{center}
\begin{tikzpicture}[scale=1.2]
\draw[black, very thick, -] (0,0) -- (0,2) -- (2,0) -- (0,0);
\draw[black, -] (2,0) node[black, below right] {\tiny{$\left(1,0 \right)$}} -- (1,1) node[black, right] {\tiny{$\left(\frac{1}{2},\frac{1}{2} \right)$}} -- (0,1) node[black, left] {\tiny{$\left(0,\frac{1}{2} \right)$}} -- (1,0) node[black, below] {\tiny{$\left(\frac{1}{2},0 \right)$}}
 -- (2,0);
\fill[red,opacity=0.5] (2,0) -- (1,1) -- (0,1) -- (1,0) -- (2,0);
\fill[black] (2,0) circle(0.05cm); \fill[black] (1,1) circle(0.05cm); \fill[black] (0,1) circle(0.05cm); \fill[black] (1,0) circle(0.05cm);
\draw(1,0.5) node[black] {{$P_1$}};
\end{tikzpicture}
\end{center}
\end{minipage}
\begin{minipage}{0.30\textwidth}
\begin{center}
\begin{tikzpicture}[scale=1.2]
\draw[black, very thick, -] (0,0) -- (0,2) -- (2,0) -- (0,0);
\draw[black, -] (1,0) node[black, below] {\tiny{$\left(\frac{1}{2},0 \right)$}} -- (1,1) node[black, right] {\tiny{$\left(\frac{1}{2},\frac{1}{2} \right)$}} -- (0,2) node[black, above left] {\tiny{$\left(0,1\right)$}} -- (0,1) node[black, left] {\tiny{$\left(0,\frac{1}{2}\right)$}}
 -- (1,0);
\fill[red,opacity=0.5] (1,0) -- (1,1) -- (0,2) -- (0,1) -- (1,0);
\fill[black] (0,2) circle(0.05cm); \fill[black] (1,1) circle(0.05cm); \fill[black] (0,1) circle(0.05cm); \fill[black] (1,0) circle(0.05cm);
\draw(0.5,1) node[black] {{$P_2$}};
\end{tikzpicture}
\end{center}
\end{minipage}
\begin{minipage}{0.30\textwidth}
\begin{center}
\begin{tikzpicture}[scale=1.2]
\draw[black, very thick, -] (0,0) -- (0,2) -- (2,0) -- (0,0);
\draw[black, -] (0,0) node[black, below left] {\tiny{$\left(0,0 \right)$}} -- (1,0) node[black, below] {\tiny{$\left(1,\frac{1}{2} \right)$}} -- (1,1) node[black, right] {\tiny{$\left(\frac{1}{2},\frac{1}{2} \right)$}} -- (0,1) node[black, left] {\tiny{$\left(0,\frac{1}{2} \right)$}}
 -- (0,0);
\fill[red,opacity=0.5] (0,0) -- (1,0) -- (1,1) -- (0,1) -- (0,0);
\fill[black] (2,0) circle(0.05cm); \fill[black] (1,1) circle(0.05cm); \fill[black] (0,1) circle(0.05cm); \fill[black] (1,0) circle(0.05cm);
\draw(0.5,0.5) node[black] {{$P_3$}};
\end{tikzpicture}
\end{center}
\end{minipage}
\caption{Overlapping parallelograms covering the reference triangle $\That$} \label{figure parallelograms}
\end{figure}
\noindent We can write:
\begin{equation} \label{decomposition inverse estimate}
\vert \q \vert _{1,\That} \le \vert \q \vert_{1,P_1} + \vert \q \vert _{1,P_2} + \vert \q \vert _{1,P_3}.
\end{equation}
We only have to prove that:
\begin{equation} \label{inverse estimate on parallelograms}
\vert \q \vert _{1,P_i} \le \c_1 \p^2 \Vert \q \Vert _{0,P_i},\quad i=1,2,3.
\end{equation}
In particular, it suffices to prove the same inequality on the reference square $\Qhat = [-1,1]^2$ and then using an affine transformation in order to deduce the assertion of the theorem from \eqref{decomposition inverse estimate}.
Thus, we must prove:
\begin{equation} \label{inverse inequality on square}
\vert \q \vert_{1,\Qhat} \le c \p^2 \Vert \q \Vert _{0,\Qhat},\quad \forall \q \in \mathbb P_{\p}(\Qhat).
\end{equation}
For the purpose, we have to show:
\[
\Vert \partial _i \q \Vert_{0,\Qhat}\le c \p^2 \Vert \q \Vert_{0,\Qhat},\quad i=1,2.
\]
Owing to \cite[Theorem 3.96]{SchwabpandhpFEM}, we can write:
\begin{equation} \label{inverse inequality 1D}
\Vert \partial _i \q \Vert_{0,\widetilde I} \le c \p^2 \Vert \q \Vert_{0,\widetilde I},
\end{equation}
where
\[
\widetilde I = \begin{cases}
[-1,1] \times \{\widetilde y\},\; \widetilde y \in [-1,1]& \text{if } i=1,\\
\{\widetilde x\} \times [-1,1],\; \widetilde x \in [-1,1]& \text{if } i=2.\\
\end{cases}
\]
 Here, the constant $c$ does not depend on $\widetilde y$.
Integrating \eqref{inverse inequality 1D} in $y$ (if $i=1$) or in $x$ (if $i=2$) from $-1$ to $1$, we get \eqref{inverse inequality on square}.

\end{proof}

\end{appendices}

\section*{Acknowledgement}
The first author has received funding from the European Research Council (ERC) under the European Unions Horizon 2020 research and innovation programme (grant agreement no. 681162). 
{\footnotesize
\bibliography{bibliogr}
}
\bibliographystyle{plain}
\end{document}